\providecommand{\U}[1]{\protect\rule{.1in}{.1in}}
\newtheorem{theorem}{Theorem}[section]
\newtheorem{claim}[theorem]{Claim}
\newtheorem{corollary}[theorem]{Corollary}
\newtheorem{example}[theorem]{Example}
\newtheorem{lemma}[theorem]{Lemma}
\newtheorem{remark}[theorem]{Remark}
\newtheorem*{quote1}{Theorem \ref{orthogonalprojectionforsingleclosedgeodesic}}
\begin{document}
\title{A geometric property of closed geodesics on hyperbolic surfaces}

\author{Max Neumann-Coto}
\address{Instituto de Matem\'aticas, Universidad Nacional Aut\'onoma de M\'exico, Ciudad de M\'exico, 04510 M\'exico}
\email{max.neumann@im.unam.mx}

\author{Peter Scott}
\address{Mathematics Department, University of Michigan
Ann Arbor, Michigan 48109, USA}
\email{pscott@umich.edu}

\subjclass[2010]{Primary 51M10; Secondary 53C22, 57M60}
\thanks{Research partially supported by PAPIIT-UNAM grant IN115414}

\begin{abstract}
\noindent We study closed geodesics on hyperbolic surfaces, and give bounds
for their angles of intersection and self-intersection, and for the sides of
the polygons that they form, depending only on the lengths of
the geodesics.
\end{abstract}

\maketitle

In this paper we consider the geometry of closed geodesics on a hyperbolic
surface $M$, by looking at the geodesic lines that form their pre-images in $%
\mathbb{H}^{2}$. We are interested in knowing how close these lines can be
to each other, how small are their angles of intersection and how large the
polygons formed by these lines can be, in terms of the lengths of the closed
geodesics involved.

If $l$ and $m$ are two geodesic lines in the hyperbolic plane $\mathbb{H}%
^{2} $ that do not share a point at infinity, the orthogonal projection of $%
l $ onto $m$ has finite length, which depends on the distance between the
lines if they are disjoint, or the angle of intersection if they meet. If $l$
and $m$ intersect, the rotation of $\mathbb{H}^{2}$ about their intersection
point through angle $\pi $ interchanges $l$ and $m$. If $l$ and $m$ are
disjoint, we can again interchange them by rotating through $\pi $ about the
midpoint of their common perpendicular. It follows that in all cases, the
orthogonal projection of $l$ onto $m$ and the orthogonal projection of $m$
onto $l$ have the same length.

The main result of the paper is the following.

\begin{quote1}
Let $\gamma $ be a closed geodesic on an orientable hyperbolic surface $M$,
and let $l$ and $m$ be distinct geodesics in $\mathbb{H}^{2}$ above $\gamma$%
. Then the orthogonal projection of $l$ onto $m$ has length strictly less
than $l(\gamma )$.
\end{quote1}

Theorem \ref{orthogonalprojectionforsingleclosedgeodesic} gives a bound for
the self-intersection angles of a hyperbolic geodesic $\gamma $ in terms of
its length $l(\gamma )$. Recall that the \textsl{angle of parallelism} of a
positive number $a$, denoted by $\Pi (a)$, is the third angle of a right
hyperbolic triangle that has two asymptotic parallel sides and a side of
length $a$. In Corollary \ref{bounds for angles}, we show that if $\gamma $
is a closed oriented geodesic on an orientable hyperbolic surface $M$, and $%
\phi$ is the angle formed by the two outgoing arcs of $\gamma$ at a
self-intersection point, then $\Pi (\frac{l(\gamma )}{2})<\phi <\pi -\Pi (%
\frac{l(\gamma )}{4})$.

Theorem \ref{orthogonalprojectionforsingleclosedgeodesic} also gives a bound
for the size of the convex polygons in $\mathbb{H}^{2}$ formed by geodesic
lines above a closed hyperbolic geodesic. In Corollary \ref%
{boundsforpolygons}, we show that if $\gamma $ is a closed geodesic on an
orientable hyperbolic surface $M$, then the triangles formed by the geodesic
lines above $\gamma $ in $\mathbb{H}^{2}$ have sides shorter than $l(\gamma
) $, and the $n$--gons have sides shorter than $(n-2)l(\gamma )$.

The bound given in Theorem \ref{orthogonalprojectionforsingleclosedgeodesic}
is optimal, in the sense that for each hyperbolic surface $M$, there is a
sequence of geodesics in $M$ and lines above them in $\mathbb{H}^{2}$ for
which the ratio of the projection length to the geodesic length approaches $%
1 $. 

This does not imply that the above bounds for the angles are optimal.
Also the above bounds for the sides of $n$--gons might be far from optimal
for $n>3$. 

Theorem \ref{orthogonalprojectionforsingleclosedgeodesic} can be generalized
to the case of two closed geodesics. In Theorem \ref%
{orthogonalprojectionfortwoclosedgeodesics}, we show that if $\gamma $ and $%
\delta $ are closed geodesics on an orientable hyperbolic surface $M$, and
if $l$ and $m$ are distinct geodesics in $\mathbb{H}^{2}$ above $\gamma $
and $\delta $ respectively, then the orthogonal projection of $l$ onto $m$
has length strictly less than $l(\gamma )+l(\delta )$. This bound is optimal
too, in the sense that there are two sequences of closed geodesics $\gamma
_{n}$ and $\delta _{n}$ and lines $l_{n}$ and $m_{n}$ above them, for which
the ratio between the length of the projection of $\gamma _{n}$ to $\delta
_{n}$ and the sum of the lengths of $\gamma _{n}$ and $\delta _{n}$
approaches $1$.

In section \ref{section:Mainresults}, we also give lower bounds for the
intersection angles of $\gamma $ and $\delta $, and we give upper bounds for
the lengths of polygons formed by $\gamma $ and $\delta $.

When we were finishing this paper we learned of a result by Gilman, that is
closely related to Theorem \ref{orthogonalprojectionfortwoclosedgeodesics}.
In her last corollary in \cite{G}, she considers a pair of hyperbolic
isometries that generate a purely hyperbolic subgroup of $SL(2,R)$ and gives
inequalities that imply Theorem \ref%
{orthogonalprojectionfortwoclosedgeodesics}. However, this theorem does not
imply her inequalities, and she obtains somewhat stronger lower bounds for
the angles of intersection of the axes than we obtain. Gilman's result can
also be applied to a pair of conjugate isometries, but Theorem \ref%
{orthogonalprojectionforsingleclosedgeodesic} does not follow from her
result. The bounds we obtain for the angles of self-intersection of a closed
geodesic are substantially stronger than her bounds in this case. Our reason
for keeping Theorem \ref{orthogonalprojectionfortwoclosedgeodesics}, besides
completeness, is that the proof of the most interesting case (when the two
lines meet) is geometric and only uses two easy lemmas.\ At the end of this
paper, we discuss the relations between the bounds obtained from our results
and from Gilman's result in \cite{G}.

Most of this paper is devoted to the proof of Theorem \ref%
{orthogonalprojectionforsingleclosedgeodesic}. We will use a mix of
geometric and algebraic arguments to deal with the different cases, some of
which are very simple and allow us to give some better bounds, but others
are quite intricate and involve many steps. In section \ref{section:trees},
we prove some simple results about axes of elements of a group which acts on
a tree. In section \ref{section:hyperbolic}, we start on the proof of our
main result. We prove some special cases and reduce to the case when $M$ is
a three-punctured sphere or a once-punctured torus. In sections \ref%
{section:3-puncturedsphere} and \ref{section:oncepuncturedtorus}, we
consider these two cases separately. Finally in section \ref%
{section:Mainresults}, we complete the proof of Theorem \ref%
{orthogonalprojectionforsingleclosedgeodesic}, and then deduce several
consequences.

\section{Groups acting on trees\label{section:trees}}

Theorem \ref{orthogonalprojectionforsingleclosedgeodesic} can be regarded as
a geometric version of a simple result about the intersections of axes of
elements of a group acting on a tree.

Let $T$ denote the tree which is the universal cover of a finite graph $X$
with a single vertex. Such a graph is called a \textit{rose}. Thus the free
group $G=\pi _{1}(X)$ acts freely on $T$ with quotient $X$. After orienting
each loop in the rose $X$, there is a naturally associated set $S$ of free
generators of $G$, one generator for each oriented loop. Thus each oriented
edge of $T$ has an element of $S\cup S^{-1}$ associated to it, and any
oriented edge path $\lambda $ in $T$ has a word $w$ in $S\cup S^{-1}$
associated to it. Any element $\alpha $ of $G$ can be expressed uniquely as
a reduced word in $S\cup S^{-1}$. We are interested in the minimal length $%
L(\alpha )$ of all words representing conjugates of $\alpha $. Such a word
realizes this minimal length if and only if it is cyclically reduced. Any
nontrivial element $\alpha $ of $G$ has an axis $A$, which is an edge path
in $T$ that is preserved by $\alpha $, and on which $\alpha $ acts by a
translation of length $L(\alpha )$. This is why we use the same letter, $L$,
for this algebraically defined length as for the length of an edge path in $%
T $. The infinite reduced word associated to $A$ when suitably oriented, is
made by concatenating copies of a cyclically reduced word $w$ equal to a
conjugate of $\alpha$. We call this infinite word the unwrapping of $w$.

\begin{lemma}
\label{vandvinversecannotoverlap} Let $W$ be a reduced word in a free group $%
G$. If $W$ contains two nontrivial subwords $u$ and $v$ and $u=v^{-1}$, then 
$u$ and $v$ cannot overlap.
\end{lemma}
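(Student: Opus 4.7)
The plan is to argue by contradiction: suppose the subwords $u$ and $v$ of $W$ do overlap. Since $v = u^{-1}$, they have the same length $n \ge 1$, and I would first separate two subcases according to where the two occurrences begin in $W$.

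If $u$ and $v$ begin at the same position of $W$, then they occupy the same $n$ consecutive letters of $W$, so $u = v$ as reduced words; combined with $u = v^{-1}$ this gives $u^{2} = e$ in $G$, and since $G$ is free (hence torsion-free) one concludes $u = e$, contradicting the nontriviality assumption. Otherwise one of the two occurrences starts strictly earlier; after relabeling assume this is $u$. Then the overlap, viewed as a block of letters of $W$, has some length $k$ with $1 \le k \le n - 1$, and it is simultaneously the length-$k$ suffix of $u$ and the length-$k$ prefix of $v$. Call this block $x$; since it is a subword of $W$ it is itself a reduced word.

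The crux is then the following observation: reading $x$ off of $u$ gives the last $k$ letters of $u$, whereas reading $x$ off of $v = u^{-1}$ gives the first $k$ letters of $u^{-1}$, and these are precisely the inverses in reverse order of those same last $k$ letters of $u$, that is $x^{-1}$. Hence $x$ and $x^{-1}$ coincide as reduced words, so $x^{2} = e$ in $G$, and torsion-freeness forces $x = e$, contradicting $k \ge 1$. The argument hinges on that single observation about the prefix of $u^{-1}$; once the identification $x = x^{-1}$ is in place the conclusion is immediate, so I do not anticipate any real obstacle and no further case analysis is needed.
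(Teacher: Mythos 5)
Your proposal is correct and follows essentially the same route as the paper: both identify the overlap block, observe that reading it as part of $u$ and as part of $v=u^{-1}$ forces it to equal its own inverse as a reduced word, and conclude by impossibility (you via torsion-freeness, the paper by direct assertion). The only cosmetic difference is that you treat the equal-starting-position case separately, which the paper's argument absorbs into the general case.
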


\begin{proof}
We have $u =s_{1} s_{2} \ldots s_{n}$, for some $s_{i}$'s in the generating
set $S \cup S^{ -1}$ of $G$. Thus $v =s_{n}^{ -1} s_{n -1}^{ -1} \ldots
s_{1}^{ -1}$. Without loss of generality we can assume that the overlap
consists of an initial segment $s_{1} \ldots s_{k}$ of $u$ and an ending
segment $s_{k}^{ -1} \ldots s_{1}^{ -1}$ of $v$. Denoting $s_{1} \ldots
s_{k} $ by $z$, we see that $z$ is a nontrivial reduced word which is equal
to its own inverse, which is impossible. This completes the proof of the
lemma.
\end{proof}

In the next lemma we give bounds for the length of the intersection of two
axes in $T$ with conjugate stabilizers.

\begin{lemma}
\label{boundedoverlapbetweenAandgA}Let $G$ be a free group which acts freely
on a tree $T$ with quotient a rose, and naturally associated set $S$ of free
generators of $G$. Let $\alpha $ be a nontrivial element of $G$ with axis $A$%
, and $L (\alpha ) \geq 2$. Let $\alpha ^{ \prime }$ and $\alpha ^{ \prime
\prime }$ be conjugates of $\alpha $ with axes $A^{ \prime }$ and $A^{
\prime \prime }$ respectively. Then the following inequalities hold:

\begin{enumerate}
\item If $A$ and $A^{\prime \prime }$ share an edge of $T$ where the
translation directions of $\alpha $ and $\alpha ^{\prime \prime }$ disagree,
then $L(A\cap A^{\prime \prime })<\frac{L(\alpha )-1}{2}$.

\item If $A$ and $A^{\prime }$ are distinct, then $L(A\cap A^{\prime
})<L(\alpha )-1$.
\end{enumerate}
\end{lemma}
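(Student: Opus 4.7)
The plan is to work with the bi-infinite periodic word $W$ of period $L = L(\alpha)$ that labels every axis of every conjugate of $\alpha$. Because all conjugates of $\alpha$ share a cyclically reduced representative up to cyclic shift, each axis $A$, $A'$, $A''$, read in its translation direction, carries the same bi-infinite sequence $W$ up to a shift. This converts geometric overlaps of axes in $T$ into combinatorial coincidences of subwords of $W$, which I would attack via Lemma \ref{vandvinversecannotoverlap} together with the reducedness and periodicity of $W$.

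For part (1), let $N = L(A \cap A'') \geq 1$. The shared segment of two lines in a tree is connected, and each axis traverses it with a single coherent orientation, so the hypothesis that directions disagree at one shared edge forces them to disagree throughout the shared segment. Reading it in $A$'s direction yields a word $u$ of length $N$, and in $A''$'s direction yields $u^{-1}$, so $W$ contains both $u$ and $u^{-1}$ as subwords. I would pick a position $p$ of $u$ in $W$ and the nearest position $q$ of $u^{-1}$ to the right of $p$, giving $d := q - p \in (0, L]$. The interval $[p, q + N]$ is a finite reduced subword of $W$ containing both $u$ and $u^{-1}$, so Lemma \ref{vandvinversecannotoverlap} prevents overlap; reducedness additionally rules out the cancellation that would arise if the last letter of $u$ met the first letter of $u^{-1}$, yielding $d \geq N + 1$. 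Applying the same argument to $u^{-1}$ at $q$ and the next occurrence of $u$ at $p + L$ gives $L - d \geq N + 1$. Adding the two inequalities yields $L \geq 2(N + 1)$, hence $N \leq (L - 2)/2 < (L - 1)/2$.

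For part (2), if the translation directions disagree at some shared edge, part (1) already gives $N < (L - 1)/2 < L - 1$, so I may assume they agree throughout the shared segment. Then both axes read the same word $u$ of length $N$ there, so $u$ occurs in $W$ at two positions; by shifting the indexing I may assume these are $0$ and $j$ with $j > 0$. The assumption $A \neq A'$ forces the pre-edges and the post-edges of the two axes at the endpoints of the shared segment to be distinct edges of $T$; since distinct edges sharing a vertex of $T$ carry distinct labels, this gives $W[0] \neq W[j]$ and $W[N + 1] \neq W[N + 1 + j]$. In particular $j \not\equiv 0 \pmod L$, so $j \in \{1, \dots, L - 1\}$. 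Suppose, toward a contradiction, that $N = L - 1$; then matching $u$ at its two occurrences forces $W[i] = W[(i + j) \bmod L]$ for every $i \in \{1, \dots, L - 1\}$. The graph on $\mathbb{Z}/L$ with edges $\{i, (i + j) \bmod L\}$ for these $i$ is the $+j$-orbit graph (a disjoint union of $\gcd(L, j)$ cycles) with the single edge $\{0, j\}$ (arising from $i = 0$) removed; since removing one edge from a cycle leaves a path, $0$ and $j$ remain in the same connected component. Thus $W[0] = W[j]$, contradicting $W[0] \neq W[j]$, and therefore $N \leq L - 2 < L - 1$.

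The main obstacle will be the agreeing-directions subcase of part (2): Lemma \ref{vandvinversecannotoverlap} offers no inverse pair to compare, so a contradiction must instead be squeezed out of two \emph{equal} occurrences of $u$ whose neighboring letters necessarily differ. The graph-connectivity argument above is designed for the extremal situation $N = L - 1$, in which the two overlapping copies of $u$ cover all but a single position of a period and thereby pin down enough of $W$ to collapse the pre-contexts. A preliminary subtlety is verifying that if the two $W$-positions of $u$ were congruent modulo $L$, then periodicity of $W$ would make their surrounding letters coincide, forcing the two pre-edges (and post-edges) to be the same edges of $T$ and making the actual shared segment strictly longer than $N$; this is what guarantees $j \in \{1, \dots, L - 1\}$.
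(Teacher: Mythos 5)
Your part 1) is essentially the paper's argument: both reduce to the observation that $u$ and $u^{-1}$ occur as disjoint, non-adjacent subwords of a single period of the unwrapped word, forcing $2N\leq L-2$. Your part 2) takes a genuinely different route. The paper argues that if $L(I)\geq L(\alpha)$ the freeness of the action forces $\alpha=\alpha'$, and if $L(I)=L(\alpha)-1$ then $\alpha'^{-1}\alpha$ displaces an endpoint of $I$ by $2$, yet it lies in the commutator subgroup and hence, if nontrivial, displaces every point by at least $4$ (a reduced word of length less than $4$ cannot die in the abelianisation). You stay entirely inside the combinatorics of the periodic word: two occurrences of a length-$(L-1)$ subword whose preceding letters must differ because distinct oriented edges into a vertex of the Cayley tree carry distinct labels, plus connectivity of the $+j$-orbit cycle in $\mathbb{Z}/L$ after deleting the single edge $\{0,j\}$. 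This is correct, and it avoids the commutator/abelianisation input; the paper's version is shorter and handles long overlaps uniformly.

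One repair is needed: you derive a contradiction only from $N=L-1$, but the conclusion $N<L-1$ also requires ruling out $N\geq L$ (and, in principle, an infinite overlap, which can be excluded by the Busemann-type observation that two conjugate hyperbolic elements sharing an end and a translation length must coincide, or simply subsumed as below). This costs nothing: whenever $N\geq L-1$ the matching $W[i]=W[i+j]$ holds at least for $i\in\{1,\dots,L-1\}$, which is all your graph argument uses, and for $N\geq L$ one may even take $i=L$ and read off $W[0]=W[j]$ directly from periodicity. State the contradiction hypothesis as $N\geq L-1$ and the argument closes.
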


\begin{proof}
1) Choose an orientation of $A$ so that the associated infinite reduced word
is the unwrapping $\widetilde{W}$ of a cyclically reduced word $W$ equal to
a conjugate of $\alpha $. Let $I$ denote the interval $A\cap A^{\prime
\prime }$, regarded as a subinterval of $A$ with the induced orientation,
and let $u$ denote the reduced word associated to $I$. Thus $\widetilde{W}$
contains $u$ as a subword, and also contains infinitely many translates of $%
u $ by shifting by powers of $W$. Next consider the axis $A^{\prime \prime }$
of $\alpha ^{\prime \prime }$. As $\alpha ^{\prime \prime }$ is conjugate to 
$\alpha $, we know $A^{\prime \prime }$ is a translate of $A$, and we give $%
A^{\prime \prime }$ the induced orientation. Now the infinite reduced word
associated to $A^{\prime \prime }$ is also equal to $\widetilde{W}$, and so
also contains infinitely many copies of the subword $u$. If we cycle $W$ to
begin with $u$, we have $W=uv$, for some reduced word $v$, and $\widetilde{W}%
=\ldots uvuvuv\ldots $.

As the translation directions of $\alpha $ along $A$ and of $\alpha ^{\prime
\prime }$ along $A^{\prime \prime }$ disagree, the infinite reduced word $%
\widetilde{W}$ must contain a copy of $u^{-1}$. Lemma \ref%
{vandvinversecannotoverlap} tells us that $u$ and $u^{-1}$ cannot overlap,
so that $u^{-1}$ must be a subword of the subword $v$ of $\widetilde{W}$.
Further $u^{-1}$ cannot contain the initial or the final letter of $v$, as
this would contradict the fact that $\widetilde{W}$ is reduced. As $u$ and $%
u^{-1}$ have the same length, we conclude that $2L(u)\leq L(W)-2$, so that $%
L(u)\leq \frac{L(w)-2}{2}=\frac{L(\alpha )-2}{2}$.

2) Let $I$ denote the interval $A\cap A^{\prime }$. We will assume that $%
L(I)\geq 1$, as otherwise the result is trivial. Further we can assume that
the translation directions of $\alpha $ and $\alpha ^{\prime }$ agree on $I$%
, as otherwise the result follows from part 1).

If $I$ has length at least $L(\alpha )$, and $x$ denotes the initial point
of $I$, then $\alpha x$ and $\alpha ^{\prime }x$ must be equal as each is a
vertex of $I$ with distance $L(\alpha )$ from $x$. Note that this uses our
assumption that the translation directions of $\alpha $ and $\alpha ^{\prime
}$ agree. It follows that $\alpha $ equals $\alpha ^{\prime }$, so that $A$
equals $A^{\prime }$, which contradicts our hypothesis. Thus we will suppose
that $I$ has length $L(\alpha )-1$, and let $x$ and $y$ denote the endpoints
of $I$. Now the translation lengths of $\alpha $ along $A$ and of $\alpha
^{\prime }$ along $A^{\prime }$ are equal to $L(\alpha )$. Thus, by
interchanging $x$ and $y$ if needed, we have that $\alpha x$ and $\alpha
^{\prime }x$ each has distance $1$ from $y$. Thus $\alpha ^{\prime -1}\alpha 
$ moves $x$ distance $2$. But $\alpha ^{\prime -1}\alpha $ is a commutator,
as $\alpha ^{\prime }$ is a conjugate of $\alpha $, and so either it is
trivial or moves any point distance at least $4$. This is because a reduced
word of length strictly less than $4$ must be trivial or map to a non-zero
element in the abelianisation of $G$. We conclude that $\alpha ^{\prime
-1}\alpha $ is trivial, so that $\alpha $ and $\alpha ^{\prime }$ are equal
and $A$ is equal to $A^{\prime }$, which again contradicts our hypothesis.
This contradiction shows that $L(I)<L(\alpha )-1$, which completes the proof
of part 2) of the lemma.
\end{proof}

The following examples show that the inequalities given in Lemma \ref%
{boundedoverlapbetweenAandgA} are sharp. Let $G$ be the free group of rank $%
2 $ generated by $x$ and $y$, and let $T$ be the standard $4$-valent tree
with vertices labelled by elements of $G$.

For part 1), we let $\alpha =xy^{-1}xy$ and $\alpha ^{\prime \prime
}=x^{-1}\alpha x=y^{-1}xyx$, so that $L(\alpha )=4$, and $\frac{L(\alpha )-1%
}{2}=\frac{3}{2}$. Thus $L(A\cap A^{\prime \prime })<\frac{3}{2}$, and we
claim that $L(A\cap A^{\prime \prime })=1$. As $\alpha $ and $\alpha
^{\prime \prime }$ are cyclically reduced, each of their axes, $A$ and $%
A^{\prime \prime }$, passes through the vertex $e$ of $T$. Now it is easy to
check that $A$ and $A^{\prime \prime }$ each contain the edge of $T$ with
vertices $e$ and $y^{-1}$, and that indeed the translation directions of $%
\alpha $ along $A$ and of $\alpha ^{\prime \prime }$ along $A^{\prime \prime
}$ disagree on this edge. Hence $L(A\cap A^{\prime \prime })=1$, as required.

For part 2) we let $\alpha =xyx$ and $\alpha ^{\prime }=x\alpha x^{-1}=xxy$,
so that $L(\alpha )=3$, and $L(\alpha )-1=2$. Thus $L(A\cap A^{\prime })<2$,
and we claim that $L(A\cap A^{\prime })=1$. As $\alpha $ and $\alpha
^{\prime }$ are cyclically reduced, each of their axes, $A$ and $A^{\prime }$%
, passes through the vertex $e$ of $T$. Now it is easy to check that $A$ and 
$A^{\prime }$ each contain the edge of $T$ with vertices $e$ and $x$. Hence $%
L(A\cap A^{\prime })=1$, as required.

\section{Hyperbolic geometry\label{section:hyperbolic}}

Let $M$ be an orientable hyperbolic surface and let $\gamma $ be a closed
geodesic on $M$. The universal cover $\tilde{M}$ of $M$ is isometric to the
hyperbolic plane $\mathbb{H}^{2}$. If $l$ and $m$ are geodesic lines in $%
\mathbb{H}^{2}$ above $\gamma $, there is $g$ in $\pi _{1}(M)$ such that $%
m=gl$. Let $\alpha $ denote a generator of the stabilizer of $l$, and
consider the cover $F$ of $M$ with fundamental group generated by $\alpha $
and $g$. As $F$ is hyperbolic, and $\pi _{1}(F)$ has two generators, $F$
cannot be closed. As $F$ is also orientable, it follows that it must be
homeomorphic to a sphere with three disjoint discs removed, or a torus with
a disc removed. Note that as $g$ lies in $\pi _{1}(F)$, the geodesics $l$
and $m$ in $\mathbb{H}^{2}$ project to a single closed geodesic on $F$ with
the same length as $\gamma $. Hence, in order to prove Theorem \ref%
{orthogonalprojectionforsingleclosedgeodesic}, it suffices to handle the
case when $M$ is equal to $F$.

Geometrically there are several distinct possibilities depending on whether
each end of $F$ is a cusp or contains a closed geodesic. For simplicity in
most of our arguments below, we will consider only the special cases when
the ends of $F$ are cusps. It turns out that these are the most subtle
cases. In section \ref{section:Mainresults}, we will discuss how to prove
Theorem \ref{orthogonalprojectionforsingleclosedgeodesic} in general using
essentially the same arguments as in these special cases.

If $F$ is a three-punctured sphere or a once-punctured torus, there is a
pair of disjoint simple infinite geodesics which together cut $F$ into an
ideal quadrilateral $Q$. The three-punctured sphere is the double of an
ideal triangle, and we choose two of the common edges of the triangles to be
the geodesics which cut $F$ into the ideal quadrilateral $Q$. The third
common edge becomes one of the diagonals of $Q$. As the three-punctured
sphere admits a reflection isometry which fixes the three common edges, it
follows that $Q$ admits a reflection isometry in this diagonal. Hence the
diagonals of $Q$ must meet at right angles. If $F$ is a once-punctured
torus, $Q$ may not admit any reflection isometries.

\begin{figure}[ptb]
\centering 
\includegraphics[height=1.1018in]{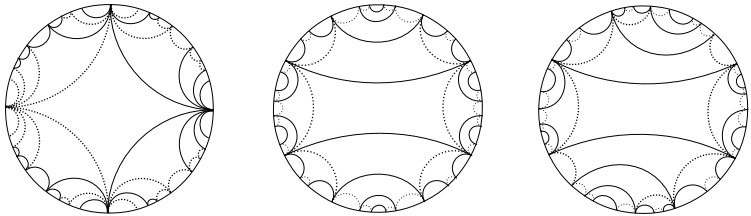}
\caption{{\protect\small Tessellations of $H^{2}$ by quadrilaterals from a
three-punctured sphere and from different once-punctured tori.}}
\label{tessellations}
\end{figure}

Now we consider the universal cover $\tilde{F}$ of $F$, which is isometric
to $\mathbb{H}^{2}$, and is naturally tiled by copies of the quadrilateral $%
Q $ as in Figure \ref{tessellations}. The tessellation obtained from a
three-punctured sphere is very symmetric, but the tessellations obtained
from a once-punctured torus need not be so symmetric. We will consider the
tree $T$ dual to these edges. The geodesics $l$ and $m$ are automatically
transverse to these cutting edges, and will intersect each translate of $Q$
in some (possibly empty) collection of embedded arcs. The group $\pi _{1}(F)$
acts freely on $T$ with quotient a finite graph with a single vertex. There
is a natural projection of $\tilde{F}$ onto $T$ which maps a (thin) collar
neighborhood of each cutting geodesic onto an edge of $T$, and maps the rest
of each polygon to a vertex of $T$. The projections of the geodesics $l$ and 
$m$ to $T$ are injective on the intersection with each collar neighborhood
of a cutting arc. As two distinct geodesics in $\mathbb{H}^{2}$ cross at
most once, it follows that the images of $l$ and $m$ traverse each edge of $%
T $ at most once. Thus the images in $T$ of $l$ and $m$ are the axes of $%
\alpha $ and $g\alpha g^{-1}$ respectively.

Now we are in a position to apply our results from section \ref%
{section:trees}, but we will first consider some easy cases of Theorem \ref%
{orthogonalprojectionforsingleclosedgeodesic}, which can be settled using
only geometric arguments.

We will say that the \textit{bisector} of two disjoint geodesics $\lambda $
and $\lambda ^{\prime }$ in $\mathbb{H}^{2}$ is the unique geodesic $\Lambda 
$ such that reflection in $\Lambda $ interchanges $\lambda $ and $\lambda
^{\prime }$.

\begin{lemma}
\label{disjointgeodesicsbisector} Let $\gamma $ be a closed geodesic on an
orientable hyperbolic surface $M$, and let $l$ and $m$ be disjoint geodesics
in $\mathbb{H}^{2}$ above $\gamma $. Then the orthogonal projection of the
bisector of $l$ and $m$ onto $m$ has length no greater than $l(\gamma )$.
\end{lemma}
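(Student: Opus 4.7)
The plan is to reduce the geometric inequality to a collar-lemma-style inequality on the distance $D=d(l,m)>0$, and then deduce that inequality from the torsion-freeness of $\pi_{1}(M)$. Setting up coordinates on $\mathbb{H}^{2}$ so that $l$, $\Lambda$ and $m$ become the concentric upper semicircles $|z|=1$, $|z|=e^{D/2}$, $|z|=e^{D}$ in the upper half-plane model, a direct computation (write down the geodesic through a point of $\Lambda$ meeting $m$ perpendicularly, solve for the foot) yields
\[
|\pi_{m}(\Lambda)| \;=\; 2\log\coth(D/4).
\]
Using the one-line identity $\sinh(\log\coth x)=1/\sinh(2x)$, the desired estimate $|\pi_{m}(\Lambda)|\le l(\gamma)=L$ is equivalent to the single inequality
\[
\sinh(D/2)\,\sinh(L/2)\;\ge\;1.
\]

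To establish this lower bound on $D$, I would invoke the algebraic side. Let $\alpha\in\pi_{1}(M)$ be a primitive hyperbolic isometry stabilising $l$, so $\alpha$ translates $l$ by $L$; pick $g\in\pi_{1}(M)$ with $gl=m$, and set $\beta=g\alpha g^{-1}$, the corresponding translation along $m$ by $L$. Since $l\ne m$, the isometries $\alpha$ and $\beta$ have distinct axes, hence $\alpha\ne\beta$, so the commutator
\[
\alpha\beta^{-1}\;=\;\alpha g\alpha^{-1}g^{-1}
\]
is a nontrivial element of the torsion-free group $\pi_{1}(M)$ and therefore cannot be elliptic.

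Finally, I would convert this non-ellipticity into the required distance bound via the trace formula. In the matrix realisation of the first paragraph, a short calculation (or the standard product-of-translations formula) shows that one of the two elements $\alpha\beta^{\pm 1}$ has trace $2\bigl[\cosh^{2}(L/2)-\sinh^{2}(L/2)\cosh D\bigr]$, while the other has trace $2\bigl[\cosh^{2}(L/2)+\sinh^{2}(L/2)\cosh D\bigr]\ge 2$ and carries no information; by replacing $\beta$ with $\beta^{-1}$ if necessary, we may assume the informative element is $\alpha\beta^{-1}$. Imposing $|\mathrm{tr}(\alpha\beta^{-1})|\ge 2$ and then applying $\cosh D-1=2\sinh^{2}(D/2)$ together with $\cosh^{2}(L/2)-1=\sinh^{2}(L/2)$ collapses the resulting inequality exactly to $\sinh(D/2)\sinh(L/2)\ge 1$, completing the proof.

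The main obstacle is essentially bookkeeping: matching the sign in the trace formula to the geometric direction of translation along $m$, and verifying the hyperbolic-trig equivalences that tie the projection-length formula, the collar-lemma inequality, and the non-elliptic trace condition into a single equivalent statement. The boundary case $|\pi_{m}(\Lambda)|=L$ occurs precisely when $\alpha\beta^{-1}$ is parabolic, which is why the conclusion must allow equality and cannot in general be strengthened to a strict inequality.
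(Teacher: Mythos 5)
Your proof is correct, but it reaches the conclusion by a computational route where the paper argues synthetically, and it is worth seeing how the two relate. The paper lets $r$ and $r'$ be the reflections in $b$ and $\alpha b$ (where $b$ is the bisector), shows that $r'\circ r$ lies in $\pi_{1}(M)$ and hence cannot be elliptic, concludes that $b$ and $\alpha b$ are disjoint, and then finishes with a separation argument: a perpendicular $\lambda$ to $l$ separates $b$ from $\alpha b$, so $b$ lies between $\lambda$ and $\alpha\lambda$ and its projection onto $l$ has length at most the translation length of $\alpha$. Since $r\alpha r=\beta^{\pm 1}$, the paper's element $r'\circ r=\alpha r\alpha^{-1}r=\alpha\beta^{\mp 1}$ is literally the element whose non-ellipticity you exploit, so the group-theoretic kernel of the two arguments is identical; what differs is how the non-ellipticity is converted into the projection bound. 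You make everything quantitative: the formula $|\pi_{m}(\Lambda)|=2\log\coth(D/4)$, its equivalence with $\sinh(D/2)\sinh(L/2)\ge 1$, and the trace identity $\mathrm{tr}(\alpha\beta^{\mp 1})=2-4\sinh^{2}(L/2)\sinh^{2}(D/2)$ all check out, and together they yield the sharp collar-type inequality $\sinh\bigl(d(l,m)/2\bigr)\sinh\bigl(l(\gamma)/2\bigr)\ge 1$ for any two disjoint lifts of a closed geodesic --- a statement in the spirit of Gilman's inequality discussed at the end of the paper --- as well as the identification of the equality case with $\alpha\beta^{\mp 1}$ parabolic, consistent with the paper's example of the symmetric once-punctured torus, where that element is the peripheral commutator. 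What the paper's version buys instead is a reusable template (perpendiculars to $l$ separating translates) that recurs in nearly every later lemma. Two small points to tighten: ruling out elliptics requires discreteness of $\pi_{1}(M)$ (equivalently, that it acts freely on $\mathbb{H}^{2}$), not torsion-freeness alone; and the normalization of $l$ and $m$ as concentric semicircles with $D>0$ uses the fact that distinct lifts of $\gamma$ cannot share an endpoint at infinity, which is the paper's Remark \ref{nocommonendpoint}.
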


\begin{proof}
Let $b$ be the bisector of $l$ and $m$. We will show that the translates of $%
b$ by the action of the stabilizer of $l$ are disjoint. See Figure \ref%
{disjoint_geodesics}.

If $\alpha $ is a generator of the stabilizer of $l$, and $r$ is the
reflection of $\mathbb{H}^{2}$ in $b$, then $r^{\prime }=\alpha \circ r\circ
\alpha ^{-1}$ is the reflection of $\mathbb{H}^{2}$ in $\alpha b$. If $b$
and $\alpha b$ were not disjoint, then $r^{\prime }\circ r$ would fix their
intersection point, and so $r^{\prime }\circ r$ would be an elliptic
isometry. But this is impossible, because $r^{\prime }\circ r$ is an element
of $\pi _{1}(M)$. To prove this, observe that $r^{\prime }\circ r$ maps $m$
to $\alpha m$ preserving the orientations induced by $\gamma $. If $p$ and $%
p^{\prime }$ are the closest points of the geodesics $m$ and $l$, then $r$
maps $p$ to $p^{\prime }$, and $r^{\prime }$ maps $\alpha p^{\prime }$ to $%
\alpha p$. So $r^{\prime }\circ r$ maps the point $p$ in $m$ to a point in $%
\alpha m$ at distance $l(\gamma )$ from $\alpha p$. Therefore $\alpha
^{-1}\circ r^{\prime }\circ r$ is a translation along $m$ of length $%
l(\gamma )$, and so it lies in $\pi _{1}(M)$.

Let $\mu $ be the geodesic joining $p$ and $p^{\prime }$, so $\mu $ crosses $%
l$, $m$ and $b$ orthogonally. Thus $\alpha \mu $ joins $\alpha p$ and $%
\alpha p^{\prime }$ and crosses $\alpha l$, $\alpha m$ and $\alpha b$
orthogonally. Now if $\lambda $ is the perpendicular bisector of the
geodesic segment joining $p^{\prime }$ and $\alpha p^{\prime }$, then
reflection in $\lambda $ preserves $l$ and interchanges $b$ and $\alpha b$.
It follows that $\lambda $ is disjoint from $b$ and $\alpha b$, because if $%
\lambda $ met $b$, it would have to meet $\alpha b$ in the same point,
contradicting the fact that $b$ and $\alpha b$ are disjoint. Hence $\lambda $
crosses $l$ orthogonally and separates $b$ from $\alpha b$. It follows that $%
b$ lies between $\lambda $ and $\alpha \lambda $, so the orthogonal
projection of $b$ onto $l$ has length no greater than $l(\gamma )$, as
required.
\end{proof}

\begin{figure}[ptb]
\centering
\includegraphics[ height=1.1in]{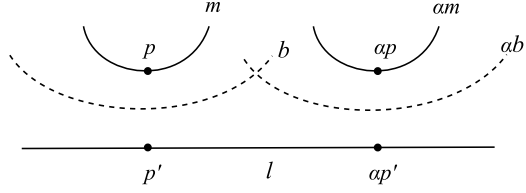}
\caption{{\protect\small If $l$ and $m$ are disjoint geodesics.}}
\label{disjoint_geodesics}
\end{figure}

\begin{example}
The bound given in the previous lemma is sharp. If $M$ is a once-punctured
torus for which the tessellation of $\mathbb{H}^{2}$ by quadrilaterals is
symmetric, and $\gamma $ is a longitude of $M$, then the bisector of two
adjacent geodesics $l$ and $m$ above $\gamma $ is an edge of a
quadrilateral, which projects to an arc of $l$ of length $l(\gamma )$.
\end{example}

At this point we need a brief discussion of orientations of geodesics in $%
\mathbb{H}^{2}$. If two oriented geodesics in $\mathbb{H}^{2}$ are disjoint,
it makes sense to say that they are coherently or oppositely oriented. Two
oriented disjoint geodesics $l$ and $m$ are \textit{coherently oriented} if,
for any geodesic $\lambda $ which cuts both of them, they cross $\lambda $
in the same direction, and they are \textit{oppositely oriented} if, for any
geodesic $\lambda $ which cuts both of them, they cross $\lambda $ in
opposite directions. Clearly this does not depend on the geodesic $\lambda $%
. In particular, $l$ and $m$ are coherently oriented if and only if the
orthogonal projection of $l$ onto $m$ is coherently oriented with $m$. (Note
that the orthogonal projection of $l$ onto $m$ inherits a natural
orientation from that of $l$, as it cannot consist of a single point, unless 
$l$ and $m$ cross and do so orthogonally.) If we choose an orientation of a
closed geodesic $\gamma $ on a hyperbolic surface $M$, it induces an
orientation of each geodesic in $\mathbb{H}^{2}$ above $\gamma $. If $l$ and 
$m$ are two disjoint such geodesics which are coherently oriented, they will
remain coherently oriented if we reverse the orientation of $\gamma $, and
the same applies if they are oppositely oriented. Thus we do not need to
specify an orientation for $\gamma $ in order to ask the question whether $l$
and $m$ are coherently or oppositely oriented.

If $l$ and $m$ are crossing oriented geodesics in $\mathbb{H}^{2}$, the
above definition of coherent orientation does not work, as the way they
cross $\lambda $ does depend on $\lambda $. But still the orthogonal
projection of $l$ onto $m$ is coherently oriented with $m$ if and only if
the orthogonal projection of $m$ onto $l$ is coherently oriented with $l$.
(Unless $l$ and $m$ cross at right angles.)

Having discussed relative orientations for geodesics in $\mathbb{H}^{2}$, we
also need to make clear the connection between this and the relative
orientations of the corresponding axes in the tree $T$ described above which
is dual to some family of cutting geodesics in $\mathbb{H}^{2}$. Let $%
\lambda $ denote one of these cutting geodesics which meets disjoint
geodesics $l$ and $m$ above a closed geodesic $\gamma $ in a hyperbolic
surface $M$. If $e$ denotes the edge of $T$ dual to $\lambda $, the axes in $%
T$ to which $l$ and $m$ project must each contain $e$, and the directions in
which $l$ and $m$ cross $\lambda $ are the same as the direction in which
the axes traverse $e$. Thus if $l$ and $m$ are disjoint geodesics in $%
\mathbb{H}^{2}$ above $\gamma $, and if their images $A$ and $B$ in $T$ have
a common edge, then $l$ and $m$ are coherently oriented if and only if $A$
and $B$ are coherently oriented on their intersection. However if $l$ and $m$
are crossing geodesics above $\gamma $, and if the axes $A$ and $B$ in $T$
overlap, these axes may or may not be coherently oriented, and this may well
depend on the choice of cutting geodesics in $\mathbb{H}^{2}$. In
particular, it is possible that the orthogonal projection of $l$ onto $m$ is
coherently oriented with $m$, while $A$ and $B$ overlap and have opposite
orientations.

Having completed this discussion, we can give a special case of Theorem \ref%
{orthogonalprojectionforsingleclosedgeodesic} for which we get an even lower
bound on the orthogonal projection of $l$ onto $m$. This is analogous to the
result of part 1) of Lemma \ref{boundedoverlapbetweenAandgA}.

\begin{figure}[ptb]
\centering
\includegraphics[ height=0.9in]{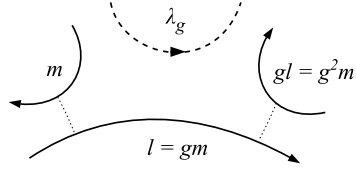}
\caption{{\protect\small $l$ and $m$ are disjoint and oppositely oriented
geodesics.}}
\label{disjoint_oppositely_oriented_geodesics}
\end{figure}

\begin{lemma}
\label{disjointoppositelyorientedprojection} Let $\gamma $ be a closed
geodesic on an orientable hyperbolic surface $M$, and let $l$ and $m$ be
disjoint geodesics in $\mathbb{H}^{2}$ above $\gamma $ which are oppositely
oriented. Then the orthogonal projection of $l$ onto $m$ has length strictly
less than $l(\gamma )/2$.
\end{lemma}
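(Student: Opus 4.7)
The plan is to exploit the $\pi$-rotation $\rho$ about the midpoint $q$ of the common perpendicular $\mu$ of $l$ and $m$. Because $l$ and $m$ are disjoint and oppositely oriented, $\rho$ sends $l$ (with its $\gamma$-orientation) to $m$ (with its $\gamma$-orientation), so $\rho$ conjugates the primitive translation $\alpha$ along $l$ to the primitive translation $\beta$ along $m$. Any deck transformation $g\in\pi_1(M)$ with $gl=m$ also satisfies $g\alpha g^{-1}=\beta$ (covering maps preserve orientations), so $g^{-1}\rho$ centralizes $\alpha$ in $\operatorname{Isom}^+(\mathbb{H}^2)$ and hence lies in the one-parameter group $\{\alpha^t\}_{t\in\mathbb{R}}$ of translations along $l$. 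Writing $\rho=g\alpha^{t_0}$, it follows that $\pi_1(M)\cap\{\rho\alpha^s:s\in\mathbb{R}\}$ is the coset $\{\rho\alpha^{-t_0+n}:n\in\mathbb{Z}\}$, which meets $[-1/2,1/2]$, so I can pick $s$ with $|s|\leq 1/2$ such that $h:=\rho\alpha^s\in\pi_1(M)$.

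The element $h$ is nontrivial (otherwise $\rho=\alpha^{-s}$ would be a translation along $l$, contradicting that $\rho$ fixes $q\notin l$), and since $\pi_1(M)$ is torsion-free, $h$ must be hyperbolic or parabolic, so $|\operatorname{tr}(h)|\geq 2$ in $\operatorname{PSL}_2(\mathbb{R})$. I would compute this trace by placing $\mu$ on the positive $y$-axis in the upper half-plane with $q=i$; then $\rho(z)=-1/z$, and $l$ and $m$ become the semicircles centered at the origin of radii $e^{-d/2}$ and $e^{d/2}$, where $d$ is the distance between $l$ and $m$. Using the standard matrix for $\alpha^s$ (the translation of length $sL$ along $l$, where $L=l(\gamma)$) one finds $|\operatorname{tr}(\rho\alpha^s)|=2\,|\sinh(sL/2)\sinh(d/2)|$. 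Combining $|\operatorname{tr}(h)|\geq 2$ with $|s|\leq 1/2$ then yields $\sinh(L/4)\sinh(d/2)\geq 1$, equivalently $L\geq 4\log\coth(d/4)$.

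Independently, in the same coordinates a direct calculation gives the orthogonal projection of $l$ onto $m$ as a subsegment of $m$ of hyperbolic length $2\log\coth(d/2)$: its endpoints are the feet of the perpendiculars dropped from the ideal endpoints $\pm e^{-d/2}$ of $l$ to $m$, which sit at the points of $m$ with half-angle tangents $\tanh(d/2)$ and $\coth(d/2)$. Combining with the previous lower bound on $L$ and using that $\coth$ is strictly decreasing on $(0,\infty)$, I conclude
\[
\frac{L}{2}\geq 2\log\coth(d/4)>2\log\coth(d/2),
\]
so the projection has length strictly less than $l(\gamma)/2$, as required. The main obstacle is the $\operatorname{PSL}_2(\mathbb{R})$ trace calculation for $\rho\alpha^s$ and verifying that the relevant coset meets $[-1/2,1/2]$; once those are settled, the final inequality reduces to the strict monotonicity of $\coth$.
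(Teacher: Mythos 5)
Your proof is correct, but it proceeds by a genuinely different route from the paper's. The paper argues synthetically: using the opposite-orientation hypothesis it shows that the axis of any $g$ with $l=gm$ lies between $l$ and $m$ without separating them, deduces that the whole family $\{\alpha ^{k}m,\alpha ^{k}gl\}_{k\in \mathbb{Z}}$ consists of pairwise disjoint geodesics disjoint from $l$, and then observes that their orthogonal projections onto $l$ are pairwise disjoint and all of the same length, with two of them packed into each period of $\alpha $; strictness comes from Remark \ref{nocommonendpoint}. You instead run a trace computation in $\mathrm{PSL}_{2}(\mathbb{R})$: the $\pi $-rotation $\rho $ about the midpoint of the common perpendicular carries oriented $l$ to oriented $m$ exactly because they are oppositely oriented, so $g^{-1}\rho $ centralizes $\alpha $ and $\rho \alpha ^{s}\in \pi _{1}(M)$ for some $|s|\leq 1/2$; discreteness and torsion-freeness force $|\mathrm{tr}(\rho \alpha ^{s})|=2|\sinh (sL/2)\sinh (d/2)|\geq 2$, hence $\sinh (L/4)\sinh (d/2)\geq 1$, while the projection of $l$ onto $m$ has length exactly $2\log \coth (d/2)<2\log \coth (d/4)\leq L/2$. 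I checked the trace identity and the projection-length formula; both are right, and the final strict inequality does not depend on whether the trace bound is attained. (You do implicitly use that $l$ and $m$ are not asymptotic, so that the common perpendicular exists --- this is the paper's Remark \ref{nocommonendpoint}, and worth citing.) What each approach buys: yours yields an explicit quantitative relation between $l(\gamma )$ and the distance $d$ between the lines, strictly sharper than the stated bound and very much in the spirit of Gilman's inequality discussed at the end of the paper; the paper's softer disjointness argument avoids all matrix computations and is the template reused for the other configurations (crossing geodesics, oppositely oriented axes in $T$) where no such clean rotation is available.
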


\begin{remark}
\label{nocommonendpoint}The fact that $\pi _{1}(M)$ is a discrete group of
isometries of  $\mathbb{H}^{2}$ implies that two distinct geodesics in $%
\mathbb{H}^{2}$ above $\gamma $ cannot have a common endpoint at infinity.
This fact is needed in the proof of this and many later results.
\end{remark}

\begin{proof}
As $l$ and $m$ both lie above $\gamma $, there is $g$ in $\pi _{1}(M)$ such
that $l=gm$. Let $g$ denote any element of $\pi _{1}(M)$ such that $l=gm$,
and let $\lambda _{g}$ denote the axis of $g$ in $\mathbb{H}^{2}$. Clearly $%
\lambda _{g}$ meets $l$ if and only if $\lambda _{g}$ meets $m$. As $l$ and $%
m$ are oppositely oriented, they must be disjoint from $\lambda _{g}$. Now
consideration of the various possibilities shows that $\lambda _{g}$ must
lie in the region of $\mathbb{H}^{2}$ between $l$ and $m$, and must not
separate $l$ from $m$. See Figure \ref%
{disjoint_oppositely_oriented_geodesics}. In particular, it is now clear
that $m$ cannot meet $gl=g^{2}m$. This holds for any $g$ such that $l=gm$.
Thus $m$ is also disjoint from $\alpha ^{k}gl$, for each integer $k$, where $%
\alpha $ generates the stabilizer of $l$. It follows immediately that the
geodesics in the family $\{\alpha ^{k}m,\alpha ^{k}gl\}_{k\in \mathbb{Z}}$,
are all disjoint, and disjoint from $l$. Now we consider the orthogonal
projections onto $l$ of these geodesics. These must all be disjoint, as any
two of them can be separated by a geodesic perpendicular to $l$, constructed
as in the proof of the previous lemma. The projection of $gl$ to $l$ is the
translate by $g$ of the projection of $l$ to $g^{-1}l=m$, and so has the
same length. We conclude that the orthogonal projections onto $l$ of the
geodesics in the family will all have the same length. Now it follows that
this length can be at most $l(\gamma )/2$. Equality can only occur if there
are distinct geodesics in the family $\{\alpha ^{k}m,\alpha ^{k}gl\}_{k\in 
\mathbb{Z}}$, with a common endpoint at infinity. This is not possible, by
Remark \ref{nocommonendpoint}, so it follows that the orthogonal projection
of $m$ onto $l$ has length strictly less than $l(\gamma )/2$, as required.
\end{proof}

Next we will obtain analogous results in the case of crossing geodesics,
even though we can no longer compare orientations of such geodesics in the
same way.

\begin{lemma}
\label{oppositelyorientedprojection}Let $\gamma $ be a closed geodesic on an
orientable hyperbolic surface $M$, and let $l$ and $m$ be crossing geodesics
in $\mathbb{H}^{2}$ above $\gamma $. If the orthogonal projection of $m$ to $%
l$ has opposite orientation from that of $l$, then this projection has
length strictly less than $l(\gamma)/2 $.
\end{lemma}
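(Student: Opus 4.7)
The plan is to follow the strategy of Lemma~\ref{disjointoppositelyorientedprojection} closely. Since $l$ and $m$ both lie above $\gamma$, there is $g\in\pi_1(M)$ with $l=gm$. Letting $\alpha$ generate the stabilizer of $l$, I consider the family of lifts $\{\alpha^k m,\alpha^k gl\}_{k\in\mathbb{Z}}$ of $\gamma$ and their orthogonal projections onto $l$. As in the disjoint case, all these projections have the same length as the projection of $m$ onto $l$: by the symmetry from the introduction, the projection of $l$ onto $m$ has this length, and applying the isometries $g$ and $\alpha^k$ converts it into the projections of $gl$ and $\alpha^k gl$ onto $l$. Moreover, by the discussion immediately preceding this lemma, the projection of $l$ onto $m$ is oppositely oriented from $m$ iff the projection of $m$ onto $l$ is oppositely oriented from $l$; since $g$ preserves the orientation induced by $\gamma$ and $\alpha^k$ preserves the orientation of $l$, every projection in the family is oppositely oriented from $l$.

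The crux will be to show that all these projections are pairwise disjoint on $l$. I would choose $g$ hyperbolic (always possible by replacing $g$ with $\alpha^k g$ for a suitable $k$) and use the opposite-orientation hypothesis to pin down how $g$ pairs the ideal endpoints of $m$ with those of $l$. Tracing this through, the four ideal points $\partial m\cup\partial(gl)$ should be arranged in a non-linked cyclic order on $\partial\mathbb{H}^2$ -- an interleaved configuration would force $g$ to be elliptic, which cannot happen in the torsion-free group $\pi_1(M)$ -- so $m$ and $gl$ are nested, and in particular disjoint, as geodesics in $\mathbb{H}^2$. Two $\alpha$-translates of $m$ (respectively of $gl$) are automatically nested, because $\alpha$ acts as a translation along $l$ and $m$ (respectively $gl$) crosses $l$. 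A perpendicular-bisector construction analogous to that in the proof of Lemma~\ref{disjointgeodesicsbisector} then produces a geodesic perpendicular to $l$ separating the projections of any two family members, giving pairwise disjointness.

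With disjointness established, the rest of the argument mirrors the disjoint case. Each of the two subfamilies contributes one projection segment per fundamental domain of $l/\langle\alpha\rangle$, so the combined length of projections in each period $l(\gamma)$ equals twice the length of the projection of $m$ onto $l$, yielding the non-strict bound. Equality would force two distinct members of the family to have projections meeting at a common endpoint on $l$, and hence two distinct lifts of $\gamma$ to share an ideal endpoint in $\partial\mathbb{H}^2$, contradicting Remark~\ref{nocommonendpoint}. Therefore the projection of $m$ onto $l$ has length strictly less than $l(\gamma)/2$.

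The hardest step will be converting the opposite-orientation hypothesis into the geometric statement that $\partial m$ and $\partial(gl)$ do not interleave. In the disjoint case, the axis $\lambda_g$ of $g$ sat as a clean barrier between $l$ and $m$; in the crossing case, $m$ and $gl$ both meet $l$, so the barrier picture is not directly available, and the non-linking of the four ideal endpoints has to be extracted by a careful case analysis of the action of $g$ on $\partial\mathbb{H}^2$, combining the orientation condition with the hyperbolicity of $g$.
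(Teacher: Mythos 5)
Your overall skeleton matches the paper's: reduce to showing $m=g^{-1}l$ and $m'=gl$ are disjoint, observe that all the projections in the family $\{\alpha ^{k}m,\alpha ^{k}gl\}$ have the same length, pack them along one period of $l$, and get strictness from Remark \ref{nocommonendpoint}. But the two load-bearing steps are missing or rest on incorrect mechanisms. First, the crux --- disjointness of $m$ and $gl$ --- is exactly the step you defer (``the hardest step will be\dots a careful case analysis''), and the mechanism you propose for it does not work: an interleaved configuration of $\partial(g^{-1}l)$ and $\partial(gl)$ does \emph{not} force $g$ to be elliptic. For example, with $g:z\mapsto \lambda z$ hyperbolic and $l$ a geodesic not meeting the axis of $g$ (say the semicircle from $1$ to $2$ with $\lambda$ close to $1$), the lines $g^{-1}l$ and $gl$ cross. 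What rules this out here is the orientation hypothesis, and the paper uses it in a genuinely different way: it splits $m$ and $gl$ into rays at $p=m\cap l$ and $p'=gl\cap l$ and shows no relevant pair of rays can meet --- one pair because the hypothesis makes the angle $m_{-}pl_{+}=l_{-}p'm'_{+}$ acute, so those rays lean away from each other; the other pair because the angle bisectors $\lambda$ and $g\lambda$ are disjoint (a crossing would give $g$ a fixed point) and separate the rays. The fixed-point-freeness of $g$ enters through the bisectors, not through the endpoints of $m$ and $gl$ directly.

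Second, even granting that all the geodesics in the family are pairwise disjoint, your passage from disjoint geodesics to disjoint projections on $l$ is not justified. The perpendicular-separation argument of Lemma \ref{disjointgeodesicsbisector} relies on the projected geodesics being disjoint from $l$; here every member of the family crosses $l$, and two disjoint geodesics that both cross $l$ can perfectly well have overlapping orthogonal projections onto $l$ (take two nearly asymptotic, unlinked geodesics each crossing $l$ at a small angle). In particular your remark that $\alpha$-translates of $m$ are ``automatically nested'' buys nothing: $m$ and $\alpha m$ are always unlinked, yet the disjointness of their projections is essentially the conclusion of the lemma, not an input. What saves the argument in the paper's configuration is that $m_{-}$ and $m'_{+}$ lie on the same side of $l$ (this is forced because $g$ is orientation-preserving and sends $m_-\mapsto l_-$, $l_+\mapsto m'_+$), so that overlapping projections would force the endpoint pairs to link and hence $m$ and $m'$ to cross; this is an additional argument your sketch does not supply. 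As written, the proposal is a plausible plan whose two essential steps remain open.
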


\begin{proof}
The orientation of $\gamma $ gives orientations for $l$ and $m$ and their
translates. Let $l_{-}$ and $l_{+}$ be the points at infinity of $l$, and
let $m_{-}$ and $m_{+}$ be the points at infinity of $m$ in the given
directions.

Let $\alpha $ be the generator of the stabilizer of $l$, and let $g$ be a
covering translation that maps $m$ to $l$, so $g$ maps the point $p=m\cap l$
to the point $p^{\prime }=l\cap gl$. We can assume (by composing $g$ with
some power of $\alpha $ if necessary) that $p^{\prime }$ lies between $p$
and $\alpha p$. See Figure \ref{crossing_oppositely_oriented_geodesics}a,
where the positive ends of the lines correspond to the tips of the arrows.

We will show that $m=g^{-1}l$ and $m^{\prime }=gl$ are disjoint. By
hypothesis, the angle $m_{-}pl_{+}$, which is equal to the angle $%
l_{-}p^{\prime }m_{+}^{\prime }$, is less than $\pi /2$. Thus the rays $%
pm_{+}$ and $p^{\prime }m_{-}^{\prime }$ cannot cross.

Now we will show that the rays $pm_{-}$ and $p^{\prime }m_{+}^{\prime }$
also cannot cross. Let $\lambda $ be the bisector of the angle $m_{-}pl_{+}$%
, so $g\lambda $ is the bisector of the angle $l_{-}p^{\prime }m_{+}^{\prime
}$. Then $\lambda $ and $g\lambda $ must be disjoint. For otherwise they
would meet at a point $x$ that is equidistant from $p$ and $p^{\prime }$,
and as $g$ is orientation preserving and maps $\lambda $ to $g\lambda $
sending $p$ to $p^{\prime }$, it would have to fix $x$, contradicting the
fact that $g$ cannot have fixed points. As $\lambda $ and $g\lambda $ are
disjoint, they separate the rays $pm_{-}$ and $p^{\prime }m_{+}^{\prime }$
so these rays cannot meet. Hence $m=g^{-1}l$ and $m^{\prime }=gl$ are
disjoint, as required.

Now the covering translation $\alpha g^{-1}$ maps $p^{\prime }=m^{\prime
}\cap l$ to $\alpha p=l\cap \alpha m$ so it maps $m^{\prime }=gl$ to $\alpha
l$, which is equal to $l$, and the preceding argument shows that $m^{\prime }
$ is disjoint from $\alpha m$. Thus $m$ and its translates by powers of $%
\alpha $ are disjoint from $m^{\prime }$ and its translates by powers of $%
\alpha $. As the projection of $m^{\prime }$ to $l$ has the same length as
the projection of $m$ to $l$ (because $m$ and $m^{\prime }$ make the same
angle with $l$), and the sum of the two lengths is less than the translation
length of $\alpha $, it follows that the projection of $m$ to $l$ is shorter
than $l(\gamma )/2$. Again we need Remark \ref{nocommonendpoint} to ensure
this inequality is strict.
\end{proof}

\begin{figure}[ptb]
\centering
\includegraphics[ height=1.3in]{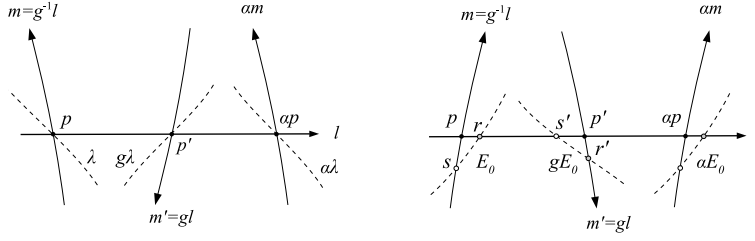}
\caption{{\protect\small a) Projection is oppositely oriented \hfill b)
Overlap is oppositely oriented}}
\label{crossing_oppositely_oriented_geodesics}
\end{figure}

\begin{lemma}
\label{oppositelyorientedaxesinT}Let $\gamma $ be a closed geodesic on an
orientable hyperbolic surface $M$, and let $l$ and $m$ be crossing geodesics
in $\mathbb{H}^{2}$ above $\gamma $ whose images $A$ and $B$ in $T$ overlap
with opposite orientations. Then the orthogonal projection of $m$ onto $l$
has length strictly less than $l(\gamma )/2$.
\end{lemma}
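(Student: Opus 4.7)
The plan is to adapt the argument of Lemma \ref{oppositelyorientedprojection}, replacing the bisector separator used there with a separator built from the cutting geodesics of the tessellation of $\mathbb{H}^2$.

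First I would translate the tree hypothesis into a geometric statement in $\mathbb{H}^2$. If $A$ and $B$ share an edge $e$ on which they disagree in orientation, then the cutting geodesic $\lambda$ in $\mathbb{H}^2$ dual to $e$ is crossed by both $l$ and $m$ in opposite directions relative to the orientations induced by $\gamma$. I would then fix $g\in\pi_1(M)$ with $m=gl$ (so $B=gA$) and, mimicking the setup of Lemma \ref{oppositelyorientedprojection}, consider the three geodesics $l$, $m=g^{-1}l$ and $m'=gl$.

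The central geometric claim is that $m$ and $m'$ are disjoint. In Lemma \ref{oppositelyorientedprojection} disjointness came from the bisector of an acute angle at $p=l\cap m$; since that angle need not be acute here, I would replace the bisector by the two cutting geodesics $\lambda$ and $g\lambda$. These are disjoint by discreteness and the fact that $\lambda$ projects to an embedded simple infinite geodesic on $F$. The hypothesis that $l$ crosses $\lambda$ in one direction and $m$ in the other, together with $g$-equivariance applied to the crossing of $l$ with $\lambda$, forces $\lambda$ and $g\lambda$ to form a strip that separates $m$ from $m'$, so that these two geodesics cannot meet.

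Once the disjointness of $m$ and $m'$ is in hand, the conclusion follows by the same counting argument as in Lemma \ref{oppositelyorientedprojection}. With $\alpha$ generating the stabilizer of $l$, the family $\{\alpha^k m,\alpha^k m'\}_{k\in\mathbb Z}$ consists of pairwise disjoint geodesics whose orthogonal projections onto $l$ are disjoint arcs of a common length (equal to the projection length of $m$ onto $l$, by equivariance and by the fact that $m$ and $m'$ meet $l$ at the same angle). These disjoint arcs all fit inside a single translation period of $\alpha$, of length $l(\gamma)$, so the common projection length is at most $l(\gamma)/2$. Remark \ref{nocommonendpoint} excludes the equality case, giving the strict bound.

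I expect the main obstacle to be carrying out the disjointness step cleanly. Unlike the bisectors of Lemma \ref{oppositelyorientedprojection}, the separators $\lambda$ and $g\lambda$ are not symmetrically placed with respect to $m$ and $m'$, so some careful bookkeeping is required to check that the four half-rays from $p=l\cap m$ and from $p'=l\cap m'$ land on the correct sides of $\lambda$ and $g\lambda$. This is precisely where the opposite-orientation hypothesis on the shared edge $e$ is used in an essential way.
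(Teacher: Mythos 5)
Your overall strategy is the paper's: reduce to showing that $m=g^{-1}l$ and $m'=gl$ are disjoint, replace the angle bisectors used in Lemma \ref{oppositelyorientedprojection} by the cutting geodesics $E_0$ and $gE_0$ dual to an edge of $T$ that $A$ and $B$ traverse in opposite directions, and then run the same counting argument with the family $\{\alpha^k m,\alpha^k m'\}$. The final counting step, the observation that $E_0$ and $gE_0$ are disjoint translates of a cutting geodesic crossed by $l$ in opposite directions, and the appeal to Remark \ref{nocommonendpoint} for strictness are all in line with the paper.

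The gap is in the central separation claim. You assert that $\lambda$ and $g\lambda$ ``form a strip that separates $m$ from $m'$, so that these two geodesics cannot meet.'' That cannot be literally true: $m$ itself crosses $\lambda$ (this is exactly what it means for the axis $B$ to traverse the dual edge), and $m'=gl$ crosses $g\lambda$, so neither geodesic lies entirely on one side of the strip. The cutting geodesics can only separate one pair of rays, namely $pm_+$ and $p'm'_-$; the other pair, $pm_-$ and $p'm'_+$, must be handled by a different mechanism. The paper does this by a dichotomy on the angle $m_-pl_+$ (which equals $l_-p'm'_+$): if it is at most $\pi/2$, Lemma \ref{oppositelyorientedprojection} already gives the conclusion and there is nothing left to prove; if it exceeds $\pi/2$, those two rays diverge for purely angular reasons, and only then are $E_0$ and $gE_0$ brought in for the remaining pair --- with a further case division according to whether $E_0$ passes through $p=l\cap m$ (so that it plays exactly the role of the bisector in the earlier lemma) or meets $l$ and $m$ at two distinct points (where one argues that $E_0$ cannot cross $m'$ and $gE_0$ cannot cross $m$, so the two disjoint cutting geodesics still separate the relevant ends). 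Your ``careful bookkeeping'' remark points at this difficulty, but as written the mechanism you propose does not close the argument: without the angle dichotomy and the reduction to Lemma \ref{oppositelyorientedprojection} in the acute case, disjointness of $m$ and $m'$ does not follow from the strip alone.
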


\begin{proof}
We will use the same notation as in the proof of the previous lemma. Let $%
\alpha $ be the generator of the stabilizer of $l$, and let $g$ be a
covering translation that maps $m$ to $l$ and sends the point $p=m\cap l$ to
the point $p^{\prime }=l\cap gl$ that lies between $p$ and $\alpha p$. We
want to show that $m=g^{-1}l$ and $m^{\prime }=gl$ are disjoint. For then
the same argument as in the proof of Lemma \ref{oppositelyorientedprojection}
shows that the projection of $m$ to $l$ is shorter than $l(\gamma )/2$.

We can assume that the angle $m_{-}pl_{+}$, which is equal to the angle $%
l_{-}p^{\prime }m_{+}^{\prime }$, is greater than $\pi /2$, as otherwise the
previous lemma gives the result. So the rays $pm_{-}$ and $p^{\prime
}m_{+}^{\prime }$ cannot cross. See Figure \ref%
{crossing_oppositely_oriented_geodesics}b.

It remains to show that the rays $pm_{+}$ and $p^{\prime }m_{-}^{\prime }$
cannot cross. By hypothesis, the axes $A$ and $B$ in $T$ overlap with
opposite orientations, so there is an edge $e_{0}$ of $T$ that is traversed
by $A$ and $B$ in different directions. If $E_{0}$ is the cutting geodesic
dual to $e_{0}$, then $l$ and $m$ cross $E_{0}$ in opposite directions. Thus 
$gE_{0}$ is also a cutting geodesic, and $E_{0}$ and $gE_{0}$ are disjoint
(they cannot be equal because they cross $l$ in opposite directions).

If $E_{0}$ crosses $l$ and $m$ at $p$ then $gE_{0}$ crosses $l$ and $%
m^{\prime }$ at $p^{\prime }$ and the arcs $E_{0}$ and g$E_{0}$ play the
roles of the bisectors $\lambda $ and $g\lambda $ in Figure \ref%
{crossing_oppositely_oriented_geodesics}a. As $E_{0}$ and g$E_{0}$ are
disjoint, they must separate the rays $pm_{+}$ and $p^{\prime }m_{-}^{\prime
}$, which are therefore disjoint.

If $E_{0}$ crosses $l$ and $m$ at two different points $r$ and $s$, then
either $r$ is in the ray $pl_{-}$and $s$ is in the ray $pm_{+}$, or $r$ is
in the ray $pl_{+}$ and $s$ is in the ray $pm_{-}$. See Figure \ref%
{crossing_oppositely_oriented_geodesics}b. In either case as $E_{0}$ and g$%
E_{0}$ are disjoint, $E_{0}$ cannot cross $m^{\prime }$ and $gE_{0}$ cannot
cross $m$, so $E_{0}$ and g$E_{0}$ must again separate the ends of the rays $%
pm_{+}$ and $p^{\prime }m_{-}^{\prime }$, so that these two rays must be
disjoint.

The above argument shows that, in all cases, $m=g^{-1}l$ and $m^{\prime }=gl$
are disjoint, as required.
\end{proof}

\begin{lemma}
Let $\gamma $ be a closed geodesic on an orientable hyperbolic surface $M$,
and let $l$ and $m$ be crossing geodesics in $\mathbb{H}^{2}$ above $\gamma $%
. Then the orthogonal projections of the bisectors of $l$ and $m$ onto $m$
have lengths whose sum is at most $2l(\gamma )$.
\end{lemma}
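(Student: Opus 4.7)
The plan is to follow the strategy of Lemma~\ref{disjointgeodesicsbisector}, adapted to the crossing case and applied separately to each of the two bisectors $b_1, b_2$ of $(l,m)$: I will show that the projection of each $b_i$ onto $m$ is at most $l(\gamma)$ and then sum. For each $b_i$, let $r_i$ denote the reflection in $b_i$ (which interchanges $l$ and $m$), let $\alpha$ be a generator of the stabilizer of $l$ in $\pi_1(M)$, and set $r_i' = \alpha r_i \alpha^{-1}$, the reflection in $\alpha b_i$.

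The first step mirrors the proof of Lemma~\ref{disjointgeodesicsbisector}. The composition $r_i' \circ r_i$ sends $m \to l \to \alpha m$, preserving the orientations induced by $\gamma$, so $\alpha^{-1} \circ r_i' \circ r_i$ is an orientation-preserving isometry stabilising $m$ and preserving its orientation. Tracing its action on $p = l \cap m$ identifies it with the translation along $m$ of length $l(\gamma)$, hence with the generator of the stabiliser of $m$ in $\pi_1(M)$. Since $\pi_1(M)$ is torsion-free, $r_i' \circ r_i$ cannot be a nontrivial elliptic, so $b_i$ and $\alpha b_i$ must be disjoint in $\mathbb{H}^2$; otherwise $r_i' \circ r_i$ would fix their intersection, force $b_i = \alpha b_i$, and hence $b_i = l$ (the axis of $\alpha$), a contradiction.

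For the second step I construct a separator. Let $q$ be the midpoint on $l$ of the segment from $p$ to $\alpha p$, and let $\lambda$ be the perpendicular to $l$ at $q$. Unlike the disjoint case, reflection in $\lambda$ does not interchange $b_i$ and $\alpha b_i$, because the tangents of $m$ at $p$ and of $\alpha m$ at $\alpha p$ are not reflections of each other across $\lambda$. However, the rotation $\rho_q$ by $\pi$ about $q$, which factors as $r_\lambda \circ r_l$, does interchange them: it maps $l$ to itself and, since $\pi$--rotation about $q$ reverses both the tangent of $m$ at $p$ and the tangent of $l$ at $p$, it sends $m$ to $\alpha m$ as unoriented geodesics, hence each bisector at $p$ to the corresponding bisector at $\alpha p$. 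Combining this rotational symmetry with the disjointness $b_i \cap \alpha b_i = \emptyset$ established above, I will argue that $\lambda$ itself is disjoint from $b_i$. Then the strip of width $l(\gamma)$ along $l$ bounded by $\lambda$ and $\alpha\lambda$ contains $b_i$, so the orthogonal projection of $b_i$ onto $l$ has length at most $l(\gamma)$; by the reflectional symmetry of $b_i$ exchanging $l$ and $m$, the projection onto $m$ has the same length.

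Summing over $i = 1, 2$ gives the required bound $\le 2\,l(\gamma)$. The hard part is the separator step: while the disjointness of $b_i$ from $\alpha b_i$ follows by a direct replay of the argument in Lemma~\ref{disjointgeodesicsbisector}, showing that this particular $\lambda$ misses $b_i$ is more delicate in the crossing case than in the disjoint case, because the symmetry exchanging $b_i$ with $\alpha b_i$ is only rotational rather than reflectional, and ruling out $\lambda \cap b_i$ requires exploiting the rotational symmetry $\rho_q$ together with the established disjointness to derive a contradiction from any putative intersection point.
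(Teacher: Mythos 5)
Your first step is sound: the argument of Lemma \ref{disjointgeodesicsbisector} does transfer to show that $b_i$ and $\alpha b_i$ are disjoint (here $\alpha ^{-1}\circ r_i'\circ r_i$ is again a translation along $m$ of length $l(\gamma )$, so $r_i'\circ r_i=\alpha \beta ^{\pm 1}\in \pi _1(M)$ is non-elliptic). The gap is the separator step, and it is not a technicality you have merely postponed: it cannot be filled by the means you describe. In the disjoint case the perpendicular $\lambda $ is fixed \emph{pointwise} by the reflection that swaps $b$ and $\alpha b$, so a point of $\lambda \cap b$ would also lie on $\alpha b$, contradicting disjointness. Here the symmetry swapping $b_i$ and $\alpha b_i$ is the rotation $\rho _q$, which fixes only $q$; a putative point $x\in \lambda \cap b_i$ is sent to the \emph{different} point $\rho _q(x)\in \lambda \cap \alpha b_i$, and no contradiction with $b_i\cap \alpha b_i=\emptyset $ results. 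Worse, the statement you are aiming for --- that \emph{each} bisector separately has projection of length at most $l(\gamma )$ --- is strictly stronger than the lemma: the bisector of the angle $\phi $ crosses $m$ at angle $\phi /2$, so its projection onto $m$ has length exactly $2\Pi ^{-1}(\phi /2)$, and this is at most $l(\gamma )$ if and only if $\phi \geq 2\Pi (l(\gamma )/2)$ (similarly $\pi -\phi \geq 2\Pi (l(\gamma )/2)$ for the other bisector). That is twice the bound of Corollary \ref{bounds for angles}, and the paper explicitly notes at the end of section \ref{section:Mainresults} that this improvement is only conjectural, contingent on a constant-gap strengthening of Theorem \ref{orthogonalprojectionforsingleclosedgeodesic}. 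So the decomposition ``$l(\gamma )+l(\gamma )$'' cannot be carried out with the tools available, and may simply be false.

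The paper instead bounds the sum directly, pairing the two \emph{different} bisectors: writing $\lambda ,\lambda '$ for the two bisectors at $p$, it shows that $\lambda $, $\alpha \lambda '$ and $\alpha ^{2}\lambda $ are pairwise disjoint, so their orthogonal projections onto $l$ are disjoint subintervals occurring in this order along $l$; hence the projections of $\lambda $ and of $\alpha \lambda '$ (which has the same length as that of $\lambda '$) together fit inside an interval of length equal to the translation length of $\alpha ^{2}$, namely $2l(\gamma )$. This allows one projection to be longer than $l(\gamma )$ provided the other is correspondingly shorter. The disjointness is obtained not from a perpendicular separator but from the commutator $\kappa =\alpha \beta \alpha ^{-1}\beta ^{-1}$: if $\lambda $ met $\alpha \lambda '$, the symmetry of the piecewise geodesic line $k$ invariant under $\kappa $ would force all the bisectors at its corners through a common point fixed by $\kappa $, making $\kappa $ elliptic, which is impossible. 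If you want to salvage your outline, this is the mechanism to aim for.
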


\begin{proof}
Let $\alpha $ be a generator of the stabilizer of $l$, and let $\beta $ be
the corresponding generator of the stabilizer of $m$, so that they are
conjugate in $\pi _{1}(M)$. Let $\lambda $ and $\lambda ^{\prime }$ denote
the bisectors of $l$ and $m$, as shown in Figure \ref%
{crossing_geodesics_bisectors}. We will show that the geodesics $\lambda $, $%
\alpha \lambda ^{\prime }$ and $\alpha ^{2}\lambda $ are disjoint, so that
their orthogonal projections to $l$ are also all disjoint. This will imply
that the sum of the orthogonal projections of $\lambda $ and $\alpha \lambda
^{\prime }$ to $l$ is at most the translation length of $\alpha ^{2}$, which
is $2l(\gamma )$, as required.

Let $p$ be the image of $l\cap m$ in $\gamma $. Consider the commutator $%
\kappa =\alpha \beta \alpha ^{-1}\beta ^{-1}$, which must be a hyperbolic or
a parabolic isometry of $\mathbb{H}^{2}$. Thus $\kappa $ is represented by a
loop $\delta $ on $M$ that follows $\gamma $ turning left or right (but
always to the same side) each time that it reaches $p$, so $\delta $ covers
the image of $\gamma $ four times before closing up. The loop $\delta $ is
the projection of a piecewise geodesic line $k$ in $\mathbb{H}^{2}$ formed
by an arc of $\alpha ^{-1}l$, an arc of $m$, an arc of $l$, an arc of $%
\alpha m$ and their translates by the action of $\kappa =\alpha \beta \alpha
^{-1}\beta ^{-1}$. See Figure \ref{crossing_geodesics_bisectors}. The
bisectors at the corners of $k$ are $\beta \lambda ^{\prime }$, $\lambda $, $%
\alpha \lambda ^{\prime }$, $\alpha \beta \lambda $ and their translates by
powers of $\kappa =\alpha \beta \alpha ^{-1}\beta ^{-1}$.

By the symmetry of the picture, if $\lambda $ were to intersect $\alpha
\lambda ^{\prime }$, then $\beta \lambda ^{\prime }$ would intersect $\alpha
\lambda ^{\prime }$ at the same point, and so all the bisectors at the
corners of $k$ would cross at that point, which would be fixed by $\kappa
=\alpha \beta \alpha ^{-1}\beta ^{-1}$ and so $\kappa $ would be elliptic, a
contradiction.
\end{proof}

\ 

\begin{figure}[ptb]
\centering 
\includegraphics[ height=1.4in]{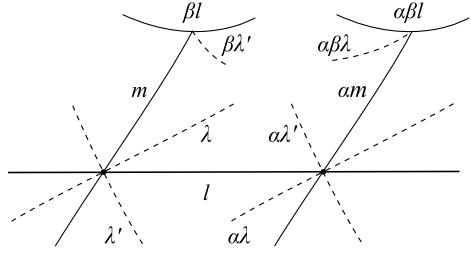}
\caption{The bisectors of two crossing geodesics.}
\label{crossing_geodesics_bisectors}
\end{figure}

Now we start the proof of the main part of Theorem \ref%
{orthogonalprojectionforsingleclosedgeodesic}, which is when $l$ and $m$
cross and their images in $T$ overlap with coherent orientations.

Let $\gamma $ be a closed geodesic on a hyperbolic three-punctured sphere or
once-punctured torus $F$, and let $l$ and $m$ be crossing geodesics in $%
\mathbb{H}^{2}$ above $\gamma $. As in the proof of Lemma \ref%
{disjointgeodesicsbisector}, there is $g$ in $\pi _{1}(M)$ such that $m=gl$,
and $l$ and $m$ project in $T$ to axes of $\alpha $ and $g\alpha g^{-1}$
respectively, which we denote by $A$ and $B$.

Part 2) of Lemma \ref{boundedoverlapbetweenAandgA}, tells us that the
intersection $A\cap B$ is a point or an edge path of $T$ of length at most $%
L(\alpha )-2$, where $L(\alpha )$ denotes the translation length of $\alpha $
acting on $T$. (Note that we cannot have $L(\alpha )=1$, as this would imply
that the closed geodesic $\gamma $ is simple, contradicting the fact that $l$
and $m$ cross.) Thus the intersection $A\cap \alpha B$ is a point or an edge
path of the same length as $A\cap B$ and so is disjoint from $A\cap B$. In
particular, there are two consecutive edges $e$ and $e^{\prime }$ of $A$
whose union meets each of $A\cap B$ and $A\cap \alpha B$ in at most one
vertex, and separates them in $A$. The cutting geodesics $E$ and $E^{\prime
} $ in $\mathbb{H}^{2}$ which correspond to $e$ and $e^{\prime }$ must cross 
$l $, not cross $m$ nor $\alpha m$, and must separate $m$ from $\alpha m$.
In particular, $m$ and $\alpha m$ must be disjoint. But as $m$ and $\alpha m$
cross $l$, it does not immediately follow that there is a geodesic in $%
\mathbb{H}^{2}$ which crosses $l$ orthogonally and separates $m$ from $%
\alpha m$. The geodesics $E$ and $E^{\prime }$ are edges of an ideal
quadrilateral region $Q$ in $\mathbb{H}^{2}$, such that $Q$ is disjoint from 
$m$ and $\alpha m$. We call $Q$ a gap quadrilateral. Note that there may be
several gap quadrilaterals between $m$ and $\alpha m$. The preceding
argument shows only that there must be at least one. Of course, $l$ does
meet $Q$ in some arc. There are two possible configurations. One is that $l$
crosses opposite edges of $Q$, and the other is that $l$ crosses adjacent
edges of $Q$. In the second case, the adjacent edges of $Q$ have a common
vertex $v$, i.e. a point in the circle at infinity of $\mathbb{H}^{2}$. We
will say that $l$ crosses $Q$ at a cusp and that $v$ is the associated
vertex.

Recall that $\pi _{1}(F)$ is a free group of rank $2$, and that $\pi _{1}(F)$
acts freely on the tree $T$ dual to the cutting geodesics in $\mathbb{H}^{2}$
given by the edges of the quadrilaterals. These edges determine a new set of
generators of $\pi _{1}(F)$, which we will denote by $x$ and $y$, and $%
\alpha $ is uniquely expressible as a reduced word in $x$ and $y$ whose
length we denote by $l(\alpha )$. Consider the usual projection of $l$ and $%
m $ to axes $A$ and $B$ in the tree $T$. We let $L(\alpha )$ denote the
length of the cyclically reduced word $W$ conjugate to $\alpha $, so that $%
\alpha $ acts on $A$ by a translation of length $L(\alpha )$. Then the
infinite reduced word $\widetilde{W}$ associated to $A$, the axis of $\alpha 
$ in $T$, is made by concatenating copies of $W$, and each letter of $W$
corresponds to an oriented edge $e$ of $A$. Further each oriented edge of $A$
corresponds to $l$ crossing one of the cutting geodesics which are edges of
our tiling of $\mathbb{H}^{2}$ by ideal quadrilaterals. Let $w$ denote the
subword of $\widetilde{W}$ associated to the interval $A\cap B$. We will
call $w$ the \textit{overlap word of} $l$. Observe that the overlap word is
empty or trivial if $A$ and $B$ do not intersect or if they intersect at a
single point. If $w$ is nontrivial, then we will usually cycle $W$ so that $%
w $ is an initial segment of $W$. The final segment of $W$ will be called
the \textit{gap word of }$l$. Similarly, the conjugate $\alpha ^{g}$ acts on 
$T$\ with axis $B$ on which it acts by a translation of length $L(\alpha )$,
and the infinite reduced word associated to $B$ is also equal to $\widetilde{%
W}$. Let $W^{\prime }$ denote the subword of length $L(\alpha )$ obtained by
reading along $B$ whose initial segment is the word $w$ associated to the
interval $A\cap B$. The final segment of $W^{\prime }$ will be called the 
\textit{gap word of} $m$. Note that the two gap words are usually very
different. In particular, neither can start or end with the same letter as
the other, as that would contradict the fact that $w$ is associated to the
full intersection $A\cap B$.

It will be convenient to introduce the following terminology. We will say
that two reduced words in $x$ and $y$ and their inverses are \textit{%
equivalent} if they are equal or become equal after possibly interchanging $%
x $ and $y$ and/or inverting $x$ or $y$. Note that each of these operations
is an automorphism of the free group $\pi _{1}(F)$ on $x$ and $y$. The
automorphisms of $\pi _{1}(F)\ $generated by these automorphisms will be
called \textit{elementary}. Thus if two words $w$ and $w^{\prime }$ in $x$
and $y$ are equivalent, there is an elementary automorphism of $\pi _{1}(F)$
which sends $w$ to $w^{\prime }$. Such an automorphism simply corresponds to
a re-labelling of the generators of $F$.

At this point, our proof of Theorem \ref%
{orthogonalprojectionforsingleclosedgeodesic} breaks up into several cases,
depending on whether $F$ is a three-punctured sphere or a once-punctured
torus and on the configuration of $l$ in the various gap quadrilaterals. We
will use a combination of arguments in hyperbolic geometry and arguments
with reduced words in a free group.

As the cases of the three-punctured sphere and once-punctured torus are
substantially different, we will devote a separate section to each.

\section{The case of the three-punctured sphere\label%
{section:3-puncturedsphere}}

\begin{lemma}
\label{nocuspcrossingmpliesresultifFissphere}Let $\gamma $ be a closed
geodesic on a hyperbolic three-punctured sphere $F$, and let $l$ and $m$ be
geodesics in $\mathbb{H}^{2}$ above $\gamma $. If there is a gap
quadrilateral $Q$ such that $l$ crosses opposite edges of $Q$, then the
orthogonal projection of $l$ onto $m$ has length strictly less than $%
l(\gamma )$.
\end{lemma}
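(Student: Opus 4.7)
The goal is to show that the orthogonal projections $\pi_l(m)$ and $\pi_l(\alpha m)$ onto $l$ are disjoint, where $\alpha$ generates the stabilizer of $l$ and so translates $l$ by $l(\gamma)$. These two projections are congruent intervals centered at $m\cap l$ and $\alpha m\cap l$, which sit on $l$ at distance $l(\gamma)$ apart. Thus disjointness is equivalent to the projection length being strictly less than $l(\gamma)$, and then the lemma follows from the symmetry of orthogonal projection noted in the introduction. To produce a gap between the two projections, I will exhibit a geodesic $\pi$ perpendicular to $l$ such that $m$ lies in one half-plane of $\pi$ and $\alpha m$ in the other.

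Since $F$ is a three-punctured sphere, $Q$ is the double of an ideal triangle and has a $\pi/2$ rotational symmetry about the point $p$ at which its two perpendicular diagonals meet. I take $\pi$ to be the perpendicular to $l$ through $p$. To verify that $\pi$ separates $m$ from $\alpha m$, I work in the Poincar\'e disk model with $p$ placed at the origin, so that $\pi$ becomes a Euclidean diameter and the four vertices $v_1,v_2,v_3,v_4$ of $Q$ sit at angles differing by $\pi/2$ around the boundary circle. Label so that the opposite edges crossed by $l$ are $E$, with endpoints $v_1,v_2$, and $E'$, with endpoints $v_3,v_4$. Because $l$ crosses both $E$ and $E'$, its endpoints at infinity lie in the arcs $(v_1,v_2)$ and $(v_3,v_4)$ at some angles $\theta_1,\theta_2$.

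A direct calculation shows that the Euclidean center of $l$ (as a circle orthogonal to the unit circle) lies on the ray from the origin at angle $(\theta_1+\theta_2)/2$; the constraints on $\theta_1,\theta_2$ place this midpoint strictly inside the ``gap'' arc $(v_2,v_3)$, so the two boundary endpoints of the diameter $\pi$ land in $(v_2,v_3)$ and $(v_4,v_1)$. Consequently the arc $(v_1,v_2)$ is contained in one of the arcs of $\pi$ and $(v_3,v_4)$ in the other. Now, as established just before the lemma, $m$ lies in the half-plane of $E$ opposite to $Q$, so $m$'s endpoints at infinity lie in arc $(v_1,v_2)$; similarly $\alpha m$'s endpoints lie in arc $(v_3,v_4)$. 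Hence $m$ and $\alpha m$ sit in opposite half-planes of $\pi$, so $\pi_l(m)$ and $\pi_l(\alpha m)$ are on opposite sides of $\pi\cap l$ and are disjoint. This gives $|\pi_l(m)|\leq l(\gamma)$, and strict inequality follows from Remark~\ref{nocommonendpoint}: equality would force two $\alpha$-translates of $m$ to share an ideal endpoint, contradicting the discreteness of $\pi_1(F)$.

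The main obstacle is the angular calculation verifying that the endpoints at infinity of $\pi$ land in the gap arcs $(v_2,v_3)$ and $(v_4,v_1)$. This relies on the full $\pi/2$ rotational symmetry of $Q$, which pins the vertices at equally spaced angles in the disk model and forces the ``angular midpoint'' of $l$'s endpoints into the correct gap arc. This symmetry is special to the three-punctured sphere; in the once-punctured torus case the same geometric argument need not go through, which is why that case is handled separately in the next section.
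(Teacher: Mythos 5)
Your proof is correct and follows essentially the same route as the paper: both take the perpendicular to $l$ through the intersection point of the diagonals of $Q$ and show that it separates $m$ from $\alpha m$, which forces the projection of $m$ onto $l$ into an interval of length $l(\gamma)$. The only real difference is how the separation is verified: the paper argues synthetically that this perpendicular passes through the regions of $Q$ cut off by the diagonals, using only that the diagonals meet at right angles, whereas you compute the ideal endpoints of the perpendicular in the disk model using the full $\pi/2$ rotational symmetry of $Q$ (valid here, since $Q$ is the regular ideal quadrilateral, but, as you note, not available in the once-punctured torus case).
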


\begin{proof}
Let $E_{1}$ and $E_{2}$ be the edges of $Q$ crossed by $l$, and let $\lambda 
$ and $\lambda^{\prime }$ be the diagonals of $Q$. So $l$ crosses $\lambda $
and $\lambda^{\prime }$ forming a triangle. As $F$ is a three-punctured
sphere, $\lambda $ and $\lambda^{\prime }$ cross at right angles, and
therefore the perpendicular $\mu$ to $l$ through the point $\lambda \cap
\lambda^{\prime }$ goes through the shaded regions in Figure \ref%
{two_quad_gap}a, and so it lies in the region of $\mathbb{H}^{2}$ bounded by 
$E_{1}$ and $E_{2}$, and separates $E_{1}$ from $E_{2}$. (This type of
argument will be used several times later.) If $\alpha$ is a generator of
the stabilizer of $l$, then $m$ lies between $\mu$ and $\alpha \mu$ or
between $\mu$ and $\alpha^{-1} \mu$, so the orthogonal projection of $m$
onto $l$ is shorter than the translation length of $\alpha $, and so is
shorter than $l(\gamma )$, as required.
\end{proof}

\begin{figure}[ptb]
\centering
\includegraphics[ height=1.2in ]{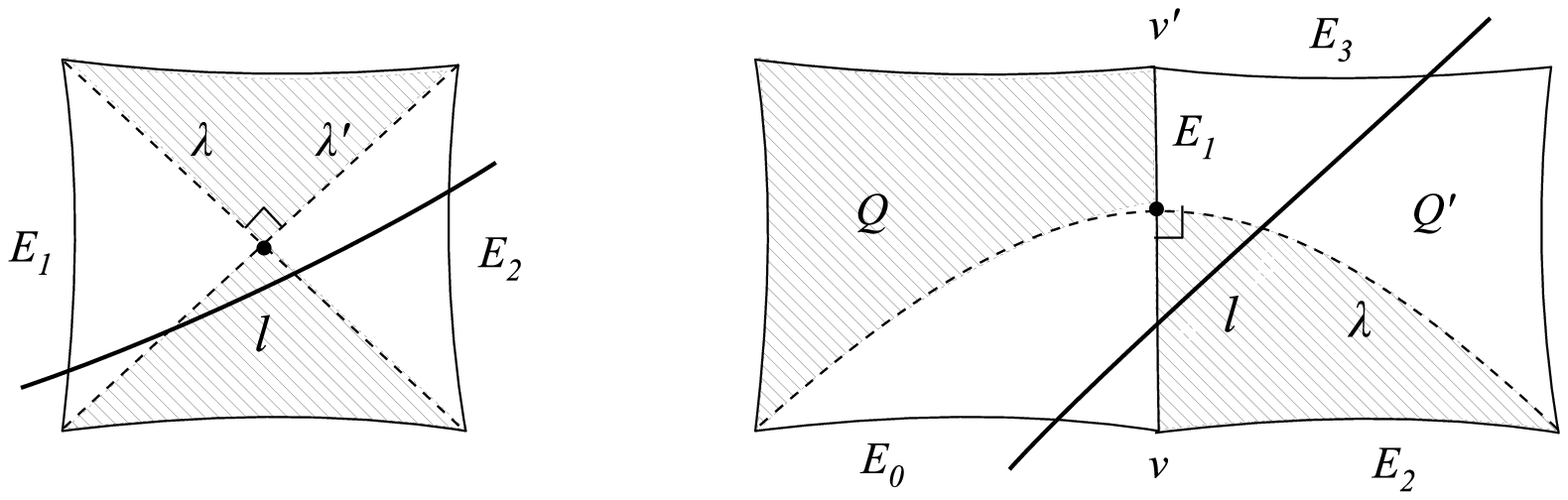}
\caption{{\protect\small a) The geodesic $l$ crossing opposite sides of a
gap quadrilateral. 
b.) The geodesic $l$ crossing two gap quadrilaterals at cusps with different
vertices.}}
\label{two_quad_gap}
\end{figure}

\begin{lemma}
\label{axesinTdonotoverlapinSphere}Let $\gamma $ be a closed geodesic on a
hyperbolic three-punctured sphere $F$, and let $l$ and $m$ be crossing
geodesics in $\mathbb{H}^{2}$ above $\gamma $ whose images in $T$ do not
share an edge. Then the orthogonal projection of $m$ onto $l$ has length
strictly less than $l(\gamma )$.
\end{lemma}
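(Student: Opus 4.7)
Since $l$ and $m$ cross in $\mathbb{H}^{2}$ and the axes $A,B$ in $T$ share no edge, the crossing point $p^{*}=l\cap m$ cannot lie on any cutting geodesic (otherwise the dual edge would belong to both $A$ and $B$), so $p^{*}$ lies in the interior of a single quadrilateral $Q_v$, and both axes pass through the dual vertex $v$. Hence $A\cap B=\{v\}$. The four edges of $Q_v$ split into the two crossed by $l$ (dual to the two edges of $A$ at $v$) and the two crossed by $m$ (dual to the two edges of $B$ at $v$), and these four edges are distinct.

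I first claim the two edges of $Q_v$ crossed by $l$ are opposite. Suppose instead they are adjacent, sharing a cusp $w$ of $Q_v$. Then the two edges crossed by $m$ are also adjacent, sharing the opposite cusp $w'$. The arc $l\cap Q_v$ lies in the ideal triangle bounded by the two adjacent edges $l$ crosses and the diagonal $D$ of $Q_v$ joining the two cusps other than $w$; similarly $m\cap Q_v$ lies in the complementary ideal triangle across $D$. A geodesic arc joining the interiors of two adjacent sides of an ideal triangle stays in the open triangle and misses the third side, so neither arc touches $D$, and $l\cap m\cap Q_v=\emptyset$, contradicting $p^{*}\in Q_v$. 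Hence $l$ crosses opposite edges $E_{1},E_{2}$ of $Q_v$ and $m$ crosses the other opposite pair.

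Let $S$ be the strip of $\mathbb{H}^{2}$ bounded by the two disjoint geodesics $E_{1},E_{2}$. Because the dual edges of $E_{1},E_{2}$ belong to $A$ and $A\cap B=\{v\}$, the geodesic $m$ does not cross $E_{1}$ or $E_{2}$; since $m$ meets the interior of $Q_v\subset S$, it is contained in $S$. Translating by $\alpha$, the geodesic $\alpha m$ is contained in the strip $\alpha S$, which is disjoint from $S$ and separated from it along $l$ by the gap quadrilaterals $Q_{v_{1}},\dots,Q_{v_{L(\alpha)-1}}$. My plan is to produce a perpendicular $\mu$ to $l$ whose foot lies on $l$ strictly between $p^{*}$ and $\alpha p^{*}$ and which lies entirely outside $S\cup\alpha S$; then $\mu$ is automatically disjoint from both $m$ and $\alpha m$, so the orthogonal projections of $m$ and $\alpha m$ to $l$ lie on opposite sides of $\mu\cap l$. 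Since these projections are $\alpha$-translates of each other and are symmetric about $p^{*}$ and $\alpha p^{*}$, the inequality $p^{*}+s<\mu\cap l<\alpha p^{*}-s$ (writing $2s$ for the common length of the projections) forces $2s<l(\gamma)$, which is the desired bound, with strictness coming from Remark \ref{nocommonendpoint}.

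If some gap quadrilateral $Q_{v_{i}}$ has $l$ crossing opposite edges, then the perpendicular to $l$ through the intersection of the orthogonal diagonals of $Q_{v_{i}}$ supplies the required $\mu$ exactly as in Lemma \ref{nocuspcrossingmpliesresultifFissphere}. The main obstacle, and the hardest part of the argument, is the remaining subcase where $l$ crosses at a cusp in \emph{every} gap quadrilateral between $Q_v$ and $Q_{\alpha v}$; there I would construct $\mu$ by exploiting the asymptotic behaviour of $l$ near the cusp(s) it approaches in the gap region together with the perpendicularity of the diagonals special to the three-punctured sphere, producing a perpendicular to $l$ contained in a cusp region that avoids both $S$ and $\alpha S$.
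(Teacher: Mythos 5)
Your set-up is correct: the crossing point lies in the interior of a single quadrilateral $Q$, the axes meet in the single vertex $v$ dual to $Q$, and your ideal-triangle argument does show that $l$ and $m$ each cross a pair of opposite edges of $Q$. The subcase in which some gap quadrilateral is crossed by $l$ in opposite edges is also correctly dispatched via Lemma \ref{nocuspcrossingmpliesresultifFissphere}. But the proof is not complete: the final subcase, where $l$ crosses every gap quadrilateral at a cusp, is only announced (``I would construct $\mu$ by exploiting the asymptotic behaviour\ldots''), not proved. That subcase is not a formality --- in the parallel situation where the axes \emph{do} overlap, the ``all cusps'' configuration occupies Lemmas \ref{twoquadgapinsphere}, \ref{basicsaboutgapquadsinsphere} and \ref{coherentloopson3-puncturedsphere} of the paper and requires genuine orientation arguments --- so it cannot be left as a sketch.

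The idea you are missing is that this troublesome subcase is vacuous, and seeing why collapses the whole lemma to two lines. Take $g\in\pi_1(F)$ with $gm=l$, normalised so that $gp^{*}$ lies between $p^{*}$ and $\alpha p^{*}$ on $l$. Since $m$ crosses the pair of opposite edges of $Q$ complementary to the pair crossed by $l$, the translate $gQ$ is a quadrilateral which $l=gm$ crosses in opposite edges; and $gQ$ is a gap quadrilateral, because its dual vertex $gv$ lies on $A$, while $m$ (resp.\ $\alpha m$) entering $gQ$ would force $gv\in A\cap B=\{v\}$ (resp.\ $gv=\alpha v$, hence $g=\alpha$ and $m=l$ by freeness of the action on $T$), both impossible. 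So there is always a gap quadrilateral crossed by $l$ in opposite edges, and Lemma \ref{nocuspcrossingmpliesresultifFissphere} applies directly; this is exactly the reduction the paper makes.
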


\begin{proof}
As the images of $l$ and $m$ in $T$ do not overlap, $l$ and $m$ cannot cross
together any edges of the quadrilaterals. So $l$ and $m$ cross at an
interior point $p$ of a quadrilateral $Q,$ and $l$ and $m$ cross opposite
edges of $Q$. Let $\alpha $ be a generator of the stabilizer of $l$ and let $%
g$ be an element of $\pi _{1}(F)$ that maps $m$ to $l$, chosen so that $gp$
lies between $p$ and $\alpha p$. Then $gQ$ is a gap quadrilateral for $l$,
and $l$ crosses opposite edges of $gQ$, so by the previous lemma the
orthogonal projection of $m$ to $l$ is shorter than $l(\gamma )$.
\end{proof}

Next we apply this result.

\begin{lemma}
\label{twoquadgapinsphere} Let $\gamma $ be a closed geodesic on a
hyperbolic three-punctured sphere $F$, and let $l$ and $m=gl$ be geodesics
in $\mathbb{H}^{2}$ above $\gamma $. Then either the projection of $m$ onto $%
l$ has length strictly less than $l(\gamma )$, or $l$ crosses all the gap
quadrilaterals at cusps with the same vertex.
\end{lemma}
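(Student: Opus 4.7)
By Lemma~\ref{nocuspcrossingmpliesresultifFissphere}, if some gap quadrilateral is crossed by $l$ through opposite edges then the projection of $m$ onto $l$ already has length strictly less than $l(\gamma)$ and the first alternative holds. So I may assume that $l$ crosses every gap quadrilateral at a cusp, and it suffices to prove the contrapositive of the remaining implication: if two such cusps are at distinct vertices, then the projection is strictly less than $l(\gamma)$.

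Under that assumption, as one travels along $l$ between $m$ and $\alpha m$ the cusp vertex must change between some pair of consecutive gap quadrilaterals $Q_1,Q_2$. Let $E$ be their common cutting edge. For each $i\in\{1,2\}$ the edge $E$ is one of the two edges of $Q_i$ that $l$ crosses, and both crossed edges are adjacent to the cusp vertex of $Q_i$; hence the cusp vertex of $Q_i$ is an endpoint of $E$. Since these vertices differ, they must be the two distinct endpoints $v_1,v_2$ of $E$. Write $F_1$ for the other edge of $Q_1$ that $l$ crosses (adjacent to $v_1$) and $F_2$ for the analogous edge of $Q_2$ (adjacent to $v_2$). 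The strategy is to construct a geodesic $\mu$ perpendicular to $l$ that lies entirely in the open strip of $\mathbb{H}^2$ bounded by $F_1$ and $F_2$. Once $\mu$ is produced, $\mu$ and $\alpha\mu$ are two perpendiculars to $l$ at distance $l(\gamma)$ along $l$ that sandwich $m$ (since $m$ lies beyond $F_1$ and $\alpha m$ lies beyond $\alpha F_2$); strict containment is ensured by Remark~\ref{nocommonendpoint}, and the desired bound on the projection follows as in Lemma~\ref{nocuspcrossingmpliesresultifFissphere}.

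To build $\mu$, I exploit the defining feature of the three-punctured sphere: the two diagonals of each ideal quadrilateral of the tessellation meet at right angles. In $Q_1$ the line $l$ necessarily crosses the diagonal $\lambda_1$ through $v_1$ (the one bordering the triangular region near $v_1$ that $l$ sweeps through), and similarly in $Q_2$ it crosses the diagonal $\lambda_2$ through $v_2$. The tessellation of the three-punctured sphere admits an isometric involution interchanging $Q_1$ and $Q_2$, and hence $v_1\leftrightarrow v_2$, $F_1\leftrightarrow F_2$, $\lambda_1\leftrightarrow \lambda_2$. Combining this symmetry with the orthogonality, inside each $Q_i$, of $\lambda_i$ and the other diagonal of $Q_i$, I will locate a point $P$ in $Q_1\cup Q_2$ such that the perpendicular $\mu$ to $l$ through $P$ meets neither $F_1$ nor $F_2$ and therefore lies in the desired strip.

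The main obstacle is precisely this last step. In Lemma~\ref{nocuspcrossingmpliesresultifFissphere} the two diagonals used were both inside a single quadrilateral and were mutually perpendicular; the altitude from their right-angle intersection to $l$ together with symmetry of the right triangle then yielded a perpendicular avoiding both cutting edges almost immediately. Here the two diagonals $\lambda_1,\lambda_2$ that $l$ actually crosses sit in different quadrilaterals and are not a priori orthogonal to one another, so the point $P$ cannot be read off from a single right triangle; its correct position must be extracted from the interplay of the two internal right angles with the order-two symmetry about $E$, and verifying that the resulting perpendicular stays in the strip between $F_1$ and $F_2$ is the crux of the proof.
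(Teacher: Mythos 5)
Your reduction matches the paper's: Lemma \ref{nocuspcrossingmpliesresultifFissphere} disposes of any gap quadrilateral crossed through opposite edges, and if the cusp vertex changes it must do so between two consecutive gap quadrilaterals $Q_1,Q_2$ sharing a cutting edge $E$ whose two endpoints $v_1,v_2$ are the respective cusp vertices; the goal is then a perpendicular to $l$ lying in the strip bounded by $F_1$ and $F_2$. All of that is correct. But the proof stops exactly where the work is: you announce that you ``will locate a point $P$'' from which the perpendicular $\mu$ is dropped, and then concede that locating $P$ and checking that $\mu$ stays in the strip ``is the crux of the proof.'' That crux is never carried out, so the argument is incomplete at its only nontrivial step. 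Worse, the device you propose --- the diagonals $\lambda_1\subset Q_1$ and $\lambda_2\subset Q_2$ through $v_1$ and $v_2$ --- does not obviously lead anywhere: these two diagonals are disjoint in $\mathbb{H}^2$ (they lie in different quadrilaterals and approach $E$ only at the ideal points $v_1$ and $v_2$), so there is no intersection point to play the role of $\lambda\cap\lambda'$ in Lemma \ref{nocuspcrossingmpliesresultifFissphere}; and the involution you invoke is misdescribed, since reflection in $E$ fixes both $v_1$ and $v_2$ and carries $F_1$ to the edge of $Q_2$ at $v_1$, not to $F_2$.

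For comparison, the paper's construction uses a single auxiliary line rather than the two diagonals. Writing $E_0=F_1$, $E_1=E$, $E_3=F_2$, it introduces the edge $E_2$ of $Q_2$ meeting $E_1$ at $v_1$ (an edge that $l$ need not cross), observes that reflection in $E_1$ is a symmetry of the three-punctured-sphere tessellation carrying $E_0$ to $E_2$, and deduces that the geodesic $\lambda$ joining the far endpoints of $E_0$ and $E_2$ is perpendicular to $E_1$. Since $l$ runs from $E_0$ to $E_3$, it crosses both $E_1$ and $\lambda$, forming a right triangle with right angle at $E_1\cap\lambda$; the altitude from that vertex to $l$ has its foot strictly between $l\cap E_1$ and $l\cap\lambda$, which traps it in the region bounded by $E_0$ and $E_3$ and separates $m$ from $\alpha m$. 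That auxiliary line $\lambda$, produced by reflecting $F_1$ across the common edge, is the ingredient your proposal is missing.
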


\begin{proof}
By Lemma \ref{nocuspcrossingmpliesresultifFissphere}, we can assume that $l$
crosses each gap quadrilateral at a cusp. Either all these cusps have the
same associated vertex, or there is a gap quadrilateral $Q$ such that $l$
crosses $Q$ at a cusp with vertex $v$ and crosses the next gap quadrilateral 
$Q^{\prime }$ at a cusp with a different vertex $v^{\prime }$. See Figure %
\ref{two_quad_gap}b. Let $E_{1}$ be the edge that separates $Q$ from $%
Q^{\prime }$, and let $E_{0}$ be the other edge of $Q$ crossed by $l$. Let $%
E_{2}$ be the other edge of $Q^{\prime }$ with one endpoint at $v$, and let $%
E_{3}$ be the other edge of $Q^{\prime }$ crossed by $l$. Thus $m$ and all
its translates by the action of the stabilizer of $l$ lie outside the region
of $\mathbb{H}^{2}$ bounded by $E_{0}$ and $E_{3}$.

As $F$ is a three-punctured sphere, reflection in $E_{1}$ interchanges $%
E_{0} $ and $E_{2}$, so the hyperbolic line $\lambda $ that joins the other
endpoints of $E_{0}$ and $E_{2}$ is perpendicular to $E_{1}$. As $l$ crosses
from $E_{0}$ to $E_{3}$, it must cross the lines $E_{1}$ and $\lambda $
forming a right triangle (see Figure \ref{two_quad_gap}b). Thus the
perpendicular to $l$ from the point $E_{1}\cap \lambda $ crosses $l$ between 
$l\cap E_{1}$ and $l\cap \lambda $, and so it goes through the shaded
regions in the picture. Hence this perpendicular is contained in the region
of $\mathbb{H}^{2}$ bounded by $E_{0}$ and $E_{3}$ and separates $E_{0}$
from $E_{3}$. As $m$ lies on the other side of $E_{0}$, and $\alpha m$ lies
on the other side of $E_{3}$, this perpendicular separates $m$ from $\alpha
m $. Thus, as before, the projection of $m$ to $l$ must be shorter than the
translation length of $\alpha $, which is $l(\gamma )$, as required.
\end{proof}

We are left with the case where $l$ crosses all the gap quadrilaterals at
cusps with the same associated vertex.

\begin{lemma}
\label{basicsaboutgapquadsinsphere} Let $\gamma $ be a closed geodesic on a
hyperbolic three-punctured sphere $F$, and let $l$ and $m=gl$ be crossing
geodesics in $\mathbb{H}^{2}$ above $\gamma $, whose images in $T$ overlap
with coherent orientations. If $l$ crosses all the gap quadrilaterals at
cusps with the same vertex $v$, then $m$ or $\alpha m$ do not cross any cusp
edges at $v$.
\end{lemma}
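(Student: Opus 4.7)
My plan is to argue by contradiction. Suppose both $m$ and $\alpha m$ cross some cusp edge at $v$. I place the upper-half-plane model of $\mathbb{H}^{2}$ with $v = \infty$, so that cusp edges at $v$ become vertical half-lines. The hypothesis that $l$ crosses every gap quadrilateral at a cusp with vertex $v$ means the gap quadrilaterals $Q_{1}, \ldots, Q_{k}$ all share the vertex $v = \infty$ and form a consecutive strip in the fan at $\infty$. Let $B_{1} < \cdots < B_{k+1}$ be the real numbers at which the cusp edges of $Q_{i}$ crossed by $l$ meet $\mathbb{R}$; then $l$ is a semicircle whose boundary endpoints $a, b \in \mathbb{R}$ satisfy $a < B_{1} < B_{k+1} < b$.

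Since $m$ is disjoint from every $Q_{i}$ and crosses $l$, a case analysis based on the interleaving of endpoints on $\partial \mathbb{H}^{2}$ forces both endpoints of $m$ to lie on the same side of $[B_{1}, B_{k+1}]$. The same holds for $\alpha m$, and because the gap region separates $m$ from $\alpha m$ in the sense that $l$ transits between their overlap regions only through the gap, $m$ and $\alpha m$ must lie on opposite sides of $[B_{1}, B_{k+1}]$. The coherent-orientation hypothesis on $A$ and $B$ in $T$ then pins down the translation direction of $\alpha$ on $\partial \mathbb{H}^{2}$ and thereby determines which of $m$, $\alpha m$ is to the left and which is to the right.

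Under the resulting configuration, $m$ crosses some cusp edge $x = c$ with $c > B_{k+1}$ and $\alpha m$ crosses some cusp edge $x = c'$ with $c' < B_{1}$. To finish, I exploit the right-angled diagonal structure of the three-punctured sphere (each quadrilateral in the tessellation has its two diagonals meeting perpendicularly, as recalled in Section \ref{section:hyperbolic}), and in the spirit of Lemmas \ref{nocuspcrossingmpliesresultifFissphere} and \ref{twoquadgapinsphere} I construct a geodesic perpendicular to $l$ through a suitably chosen diagonal intersection point. This perpendicular is forced by the configuration to separate pairs of edges that cannot simultaneously be separated, producing the desired contradiction. The main obstacle will be this final step: identifying which diagonal intersection point yields the correct perpendicular and carefully tracking how it interacts with the cusp-edge crossings $x = c$ and $x = c'$ under the action of $\alpha$. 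I expect this to follow the shaded-region style of argument used throughout Section \ref{section:3-puncturedsphere}, although the bookkeeping across multiple gap quadrilaterals may be delicate.
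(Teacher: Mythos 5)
Your proposal sets up the configuration correctly (both $m$ and $\alpha m$ confined to opposite sides of the block of gap quadrilaterals at $v$, with the gap separating them), but it never actually produces the contradiction: the final step is explicitly deferred (``identifying which diagonal intersection point yields the correct perpendicular \ldots I expect this to follow''), and the mechanism you point to is not the right one for this statement. A perpendicular to $l$ through a diagonal intersection point, separating $m$ from $\alpha m$, is the tool used in Lemmas \ref{nocuspcrossingmpliesresultifFissphere}, \ref{twoquadgapinsphere} and \ref{coherentloopson3-puncturedsphere} to bound the \emph{projection length}; it does not by itself contradict the assumption that both $m$ and $\alpha m$ cross cusp edges at $v$, and no concrete ``pair of edges that cannot simultaneously be separated'' is exhibited. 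So as written there is a genuine gap precisely where the lemma's content lies.

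The paper's proof closes this gap with two ingredients your sketch is missing. First, an orientation argument: since $\alpha$ preserves orientation and $m$ crosses $l$, the endpoints $m_{\pm}$ and $\alpha m_{\pm}$ pair up so that $m$ and $\alpha m$ travel around $v$ in opposite directions; hence if both cross cusp edges at $v$, one of them crosses those edges in the direction opposite to $l$. Second, and crucially, an algebraic argument special to the three-punctured sphere: the loops around the three cusps represent $x$, $y$ and $xy$, so after an elementary automorphism the letters read as $l$ crosses $E_{0},\ldots,E_{n}$ form a positive word, making the gap word of $l$ positive; since $W$ and $W'$ are conjugate reduced words with the same abelianisation, the gap word of $m$ is also positive. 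The cusp-edge crossings of $m$ in the opposite direction contribute negative letters, which therefore cannot lie in the gap word of $m$, and cannot lie in the overlap word either because $A$ and $B$ overlap coherently. This is the contradiction. Your purely geometric route in the upper half-plane does not engage with the word structure at all, and I do not see how to complete it without importing something equivalent to this positivity argument.
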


\begin{proof}
Label the cutting geodesics that end in $v$ by $E_{i}$, $i\in \mathbb{Z}$,
so that $l$ crosses $E_{0},E_{1},\ldots ,E_{n}$. We will call these $E_{i}$%
's cusp edges, and the regions between them cusp regions. Let $Q$ be a gap
quadrilateral, and let $E_{q}$ and $E_{q+1}$ be the edges of $Q$ crossed by $%
l$. As $Q$ separates $m$ and $\alpha m$, it follows that $m$ can only cross $%
E_{i}$'s with $i<q$, and $\alpha m$ can only cross $E_{i}$'s with $i>q+1$,
as shown in Figure \ref{cusps_at_vertex_sphere}a. Note that if $m$ crosses a
cusp edge $E_{i}$ which is also crossed by $l$, then the axes $A$ and $B$ in 
$T$ will share the edge of $T$ corresponding to $E_{i}$. Thus the letter we
read as $l$ crosses $E_{i}$ is part of the overlap word of $l$ and $m$. A\
similar comment holds if $\alpha m$ crosses a cusp edge which is also
crossed by $l$.

Now an orientation for $\gamma $ induces orientations for $l$, $m$ and $%
\alpha m$. As $m$ crosses $l$, the two endpoints $m_{+}$ and $m_{-}$ of $m$
lie on opposite sides of $l$. So $\alpha m$ has endpoints $\alpha m_{+}$ and 
$\alpha m_{-}$, and as $\alpha $ preserves orientation, $m_{+}$ and $\alpha
m_{+}$ lie on one side of $l$ and $m_{-}$ and $\alpha m_{-}$ lie on the
other side. This implies that $m$ and $\alpha m$ "travel around $v$ in
opposite directions", as shown in Figure \ref{cusps_at_vertex_sphere}a.

Now suppose that both $m$ and $\alpha m$ cross at least one $E_{i}$. It
follows that $m$ and $\alpha m$ cross the $E_{i}$'s in opposite directions.
Hence one of $m$ and $\alpha m$ crosses the $E_{i}$'s in the opposite
direction to $l$. Suppose that $m$ does this, as shown in Figure \ref%
{cusps_at_vertex_sphere}a. (If we reverse the orientations of $m$ and $%
\alpha m$ in the figure, then $\alpha m$ will do this.)

Recall that $F\ $has three cusps, and that the simple loops round these
cusps represent $x$, $y$ and $xy$ respectively, when correctly oriented.
Thus by an elementary automorphism of $\pi _{1}(F)$, we can arrange that as $%
l$ crosses $E_{0},E_{1},\ldots ,E_{n}$, either each crossing contributes $x$
to the associated word, or each crossing contributes $x$ and $y$
alternately. In particular, the gap word for $l$ is positive. As the words $%
W $ and $W^{\prime }$ associated to $l$ and $m$ are conjugates and are
reduced, the gap words for $l$ and $m$ have the same abelianisation and are
reduced. In particular, the gap word for $m$ must also be positive. As $m$
crosses the $E_{i}$'s in the opposite direction to $l$, these crossings
yield a negative word, which must therefore be disjoint from the gap word
for $m$. But this implies that these crossings yield part of the overlap
word which is also impossible, as the overlaps of $A$ and $B$ in $T$ are
coherently oriented. This contradiction shows that $m$ cannot cross any cusp
edges at $v$, as required.
\end{proof}

\begin{figure}[ptb]
\centering 
\includegraphics[ height=1.8in]{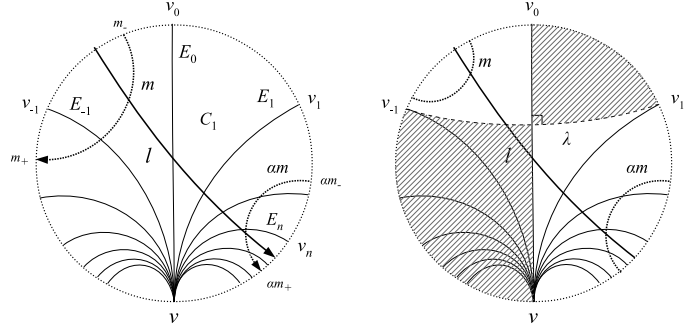}
\caption{{\protect\small The geodesic $l$ crossing the cusps at a vertex.}}
\label{cusps_at_vertex_sphere}
\end{figure}

\begin{lemma}
\label{coherentloopson3-puncturedsphere} Let $\gamma $ be a closed geodesic
on a hyperbolic three-punctured sphere $F$, and let $l$ and $m$ be crossing
geodesics in $\mathbb{H}^{2}$ above $\gamma $ whose images in $T$ overlap
with coherent orientations. If $l$ crosses all the gap quadrilaterals at
cusps with the same vertex $v$, then the orthogonal projection of $l$ onto $%
m $ has length strictly less than $l(\gamma )$.
\end{lemma}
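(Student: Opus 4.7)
Assume, after applying Lemma \ref{basicsaboutgapquadsinsphere} and possibly interchanging $m$ and $\alpha m$, that $m$ is disjoint from every cusp edge at $v$. The first step is to observe that under the standing hypotheses the gap on $A$ consists exactly of the $n+1$ consecutive edges $e_0,\ldots,e_n$ dual to $E_0,\ldots,E_n$. Indeed, any extra gap edge immediately adjacent to $e_0$ or $e_n$ would pair with it to form a gap quadrilateral; by Lemma \ref{nocuspcrossingmpliesresultifFissphere} (else the desired conclusion would already hold) this quadrilateral is crossed at a cusp, and by the hypothesis this cusp has vertex $v$, which would force $l$ to cross a further cusp edge $E_{-1}$ or $E_{n+1}$, contrary to the choice of indices. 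Consequently $|A\cap B|=L(\alpha)-(n+1)$, and the arc of $l$ from $p=l\cap m$ to $\alpha p=l\cap\alpha m$ has length $l(\gamma)$ and contains in its interior the entire cusp visit $l\cap E_0,\ldots,l\cap E_n$.

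Pass now to the upper half-plane model with $v$ at infinity. The cusp edges become equally spaced vertical lines, $l$ becomes a semicircle crossing $E_0,\ldots,E_n$, and $m$ is a semicircle lying in a strip between two consecutive cusp edges and straddling one endpoint of $l$; after an evident symmetry, assume $m\subset(x_{-1},x_0)$ with $x_{-1}<m_-<l_-<m_+<x_0$. Applying the same reasoning to $\alpha m$ at the cusp $\alpha v$ puts $\alpha m$ in the analogous wedge at $\alpha v$, bordering the first cusp edge $\alpha E_0$ crossed by $l$ there.

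The heart of the argument is to construct a geodesic $\mu$ perpendicular to $l$, disjoint from both $m$ and $\alpha m$, whose foot on $l$ lies strictly between $p$ and $\alpha p$. Given such a $\mu$, the familiar comparison with $\alpha\mu$ (as in Lemmas \ref{nocuspcrossingmpliesresultifFissphere} and \ref{twoquadgapinsphere}) confines the projection of $m$ onto $l$ to an arc of $l$ of length at most $l(\gamma)$, and Remark \ref{nocommonendpoint} then upgrades this to strict inequality. To build $\mu$ I would exploit the reflection symmetries of the three-punctured sphere tessellation (the diagonals of each cusp quadrilateral meeting at right angles, discussed before Figure \ref{tessellations}): a reflection whose axis passes through the midpoint of the cusp visit interchanges $E_k$ and $E_{n-k}$, and the perpendicular to $l$ at its intersection with this axis is the candidate for $\mu$. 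The main obstacle will be to verify that both ideal endpoints of $\mu$ fall outside the strip $(x_{-1},x_0)$ containing $m$ and outside the analogous strip at $\alpha v$ containing $\alpha m$; this is where one must combine the precise description of the gap (no edges besides $e_0,\ldots,e_n$, so nothing interposes between the cusp visits at $v$ and at $\alpha v$) with the coherent-orientation hypothesis (which places $m$ and $\alpha m$ on matched sides of the cusp visit relative to $l$).
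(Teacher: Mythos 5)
Your overall strategy---reduce to the case where $m$ misses every cusp edge at $v$, produce a perpendicular to $l$ disjoint from $m$ and $\alpha m$ that separates them, and finish by comparing with its $\alpha$-translate---is exactly the paper's. But the one step that carries all the content, namely exhibiting such a perpendicular and verifying that its ideal endpoints avoid $m$ and $\alpha m$, is precisely the step you defer as ``the main obstacle.'' Your candidate, the perpendicular to $l$ at its intersection with the symmetry axis through the midpoint of the cusp visit, comes with no control on where its feet at infinity land: $l$ need not be symmetric under that reflection, so this perpendicular is not the axis itself, and nothing pins its endpoints outside the strip containing $m$. The paper instead drops the perpendicular from the point $E_{0}\cap \lambda$, where $\lambda$ is the geodesic joining $v_{1}$ to $v_{-1}$ (reflection in $E_{0}$ swaps $E_{1}$ and $E_{-1}$, so $\lambda \perp E_{0}$); because $l$, $E_{0}$ and $\lambda$ form a right triangle with the right angle at $E_{0}\cap \lambda$, the foot of this perpendicular lies strictly between $l\cap E_{0}$ and $l\cap \lambda$, which forces its two ideal endpoints into the arcs $(v_{0},v_{1})$ and $(v,v_{-1})$. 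That is what guarantees disjointness from $m$ (trapped in the strip between $E_{-1}$ and $E_{0}$) and from $\alpha m$ (which crosses no $E_{i}$ with $i\leq 1$). Without some such right-triangle control at a point you can actually locate, the construction is incomplete.

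A secondary issue: your ``first step,'' that the gap consists exactly of $e_{0},\ldots ,e_{n}$, is both unjustified and unnecessary. Your adjacency argument only rules out gap edges beyond $E_{0}$ and $E_{n}$; it does not show the gap reaches $e_{n}$, and it need not---Lemma \ref{basicsaboutgapquadsinsphere} forces only one of $m$, $\alpha m$ off the cusp edges at $v$, so $\alpha m$ may still cross $E_{i}$ for $i\geq 2$, in which case those $e_{i}$ lie in $A\cap \alpha B$ rather than in the gap. All the argument actually requires is that the gap quadrilaterals start at $E_{0}$, so that $\alpha m$ misses $E_{0}$ and $E_{1}$. Likewise, describing $\alpha m$ as confined to a wedge at the cusp $\alpha v$ is correct but unhelpful, since the perpendicular you need to compare it with is described in coordinates adapted to $v$; the paper's formulation (``$\alpha m$ can only cross $E_{i}$ with $i>q+1$'') is the usable one.
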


\begin{proof}
Let $E_{0},E_{1},...,E_{n}$ be the cusp edges crossed by $l$. By Lemma \ref%
{basicsaboutgapquadsinsphere}, either $m$ or $\alpha m$ does not cross any $%
E_{i}$'s, so by interchanging the roles of $m$ and $\alpha m$ if necessary,
we can assume that $m$ does not cross any $E_{i}$'s, so that the gap
quadrilaterals start at $E_{0}$. As there is at least one gap quadrilateral, 
$\alpha m$ does not cross any $E_{i}$ with $i\leq 1$. Let $v_{i}$ be the
endpoint of $E_{i}$ other than $v$. Recall that the union of all the cusp
edges that end at $v$ is symmetric under reflection in $E_{0}$. In that
reflection, the point $v_{1}$ is sent to $v_{-1}$. Let $\lambda $ denote the
geodesic joining $v_{1}$ to $v_{-1}$, see Figure \ref{cusps_at_vertex_sphere}%
b. The symmetry implies that $\lambda $ meets $E_{0}$ orthogonally. As $l$
crosses $E_{0}$ and $E_{1}$, it must also cross $\lambda $. As $E_{0}$ and $%
\lambda $ form a right triangle with $l$, the perpendicular to $l$ from the
point $E_{0}\cap \lambda $ crosses $l$ between $l\cap E_{0}$ and $l\cap
\lambda $. So this perpendicular to $l$ has one endpoint between $v_{0}$ and 
$v_{1}$ and the other endpoint between $v$ and $v_{-1}$. As $m$ does not
meet any $E_{i}$, this perpendicular does not cross $m$ nor $\alpha m$ and
it separates $m$ from $\alpha m$. Now the usual argument implies that the
projection of $m$ to $l$ is shorter than $l(\gamma )$.
\end{proof}

Lemmas \ref{disjointgeodesicsbisector}, \ref{oppositelyorientedaxesinT}, \ref%
{axesinTdonotoverlapinSphere}, \ref{twoquadgapinsphere} and \ref%
{coherentloopson3-puncturedsphere} together show that Theorem \ref%
{orthogonalprojectionforsingleclosedgeodesic} holds when $M$ is the
three-punctured sphere.

\section{The case of the once-punctured torus \label%
{section:oncepuncturedtorus}}

In this section, we will proceed much as in the previous one, but the proof
is more delicate. The basic reason for this is that, unlike the
three-punctured sphere, the once-punctured torus is not rigid, but admits a $%
2$--parameter family of complete hyperbolic metrics, and when we cut a
once-punctured torus into a hyperbolic quadrilateral, the resulting
tessellation of $\mathbb{H}^{2}$ may be much less symmetric than the
tessellation from the three-punctured sphere: the only symmetries may be the
covering translations. In particular the diagonals of the quadrilaterals do
not have to intersect at right angles, and the family of cusp edges $%
...E_{-2},E_{-1},E_{0},E_{1},E_{2}...$ that end at a vertex $v$ may not be
invariant under reflections in those edges. We need to discuss what
symmetries still exist.

\begin{figure}[ptb]
\centering 
\includegraphics[ height=1.5in ]{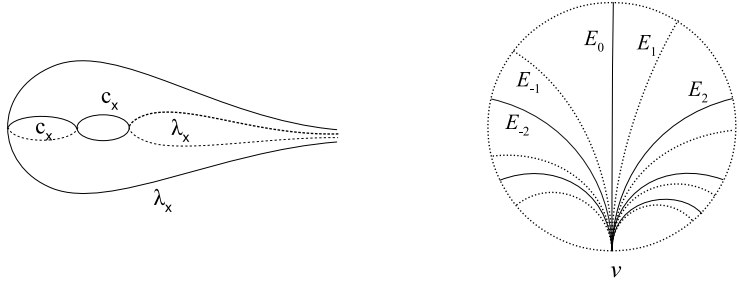}
\caption{{\protect\small Symmetries of the cusp edges for a punctured torus.}
}
\label{symmetries_of_cusp_edges_torus}
\end{figure}

We claim that if one fixes $v$ and restricts attention to just the even
numbered $E_{i}$'s, this family is invariant under reflection in any one of
them, and the same holds for the family of odd numbered edges. To see this,
refer to Figure \ref{symmetries_of_cusp_edges_torus}. The even numbered cusp
edges project to a single cutting geodesic $\lambda _{x}$ in the punctured
torus $F$, and the odd numbered cusp edges project to a single cutting
geodesic $\lambda _{y}$ in $F$. There is a simple closed geodesic $c_{x}$
representing $x$ which meets $\lambda _{x}$ in one point, and there is a
simple closed geodesic $c_{y}$ representing $y$ which meets $\lambda _{y}$
in one point. The punctured torus $F$ admits an orientation preserving
symmetry $\rho $ of order two with three fixed points, the Weierstrass
rotation. One fixed point is $c_{x}\cap c_{y}$, one is $c_{x}\cap \lambda
_{x}$, and one is $c_{y}\cap \lambda _{y}$. Thus each of $c_{x}$, $c_{y}$, $%
\lambda _{x}$ and $\lambda _{y}$ is preserved, but reversed by $\rho $. It
follows that the family of even numbered $E_{i}$'s is invariant under
reflection in any of them, as claimed. Similarly the same holds for the
family of odd numbered edges. Note that if $\lambda $ and $\lambda ^{\prime
} $ have a common endpoint $v$, then the bisector of $\lambda $ and $\lambda
^{\prime }$ must also share that endpoint. It follows from the above that
the reflections in the bisectors of two consecutive $E_{i}$'s preserves the
family of all $E_{i} $'s, interchanging the even numbered $E_{i}$'s with the
odd numbered $E_{i}$'s. Note that these reflections in the $E_{i}$'s and in
the bisectors need not come from a symmetry of the once-punctured torus and
they may not preserve the quadrilaterals of the tessellation.

We will continue to use the terminology introduced at the end of section \ref%
{section:hyperbolic}. By Lemmas \ref{disjointgeodesicsbisector} and \ref%
{oppositelyorientedaxesinT} we are left only with the case when $l$ and $m$
are crossing geodesics whose axes in $T$ do not overlap or overlap with
coherent orientations.

\begin{lemma}
\label{theremustbecuspsifFistorus}Let $\gamma $ be a closed geodesic on a
hyperbolic once-punctured torus $F$, and let $l$ and $m=gl$ be geodesics in $%
\mathbb{H}^{2}$ above $\gamma $ whose images in $T$ overlap with coherent
orientations. Then $l$ crosses at least one gap quadrilateral at a cusp.
\end{lemma}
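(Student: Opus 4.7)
The plan is to argue by contradiction: suppose every gap quadrilateral is crossed by $l$ through opposite edges, and derive a contradiction to the primitivity of $\alpha$.

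First I would extract a combinatorial statement about the cyclically reduced word $W$ representing $\alpha$. In the once-punctured torus, the boundary of each quadrilateral in the tessellation of $\mathbb{H}^{2}$ reads the commutator $xyx^{-1}y^{-1}$, so opposite edges of any quadrilateral correspond to the inverse generator pairs $(x,x^{-1})$ and $(y,y^{-1})$ among the four outgoing half-edges at the corresponding vertex of $T$. A direct check shows that whenever $l$ enters and exits a quadrilateral through opposite edges, the two consecutive letters of $W$ contributed by those crossings are identical. Since each pair of consecutive gap edges in $A$ bounds a gap quadrilateral, the opposite-edge hypothesis forces the gap word to be a pure power $s = z^{L-k}$ for a single letter $z \in \{x,x^{-1},y,y^{-1}\}$, where $k = L(A\cap B)$.

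Next I would use the coherent overlap in $T$ to derive a global periodicity of the cyclic word $W = c_0 c_1 \ldots c_{L-1}$. After cycling $W$ so that the overlap occupies positions $[0,k-1]$ and the gap occupies $[k,L-1]$, we have $c_k = \cdots = c_{L-1} = z$. If $k=0$ then $W = z^L$ and we are done. Otherwise, with the origin on $A$ chosen suitably, the coherent overlap corresponds to a shift $v \in \{1,\ldots,L-1\}$ satisfying
\[
c_i = c_{(i+v)\bmod L} \qquad (i=0,1,\ldots,k-1).
\]
Setting $d = \gcd(v,L)$, the orbits of $i \mapsto (i+v)\bmod L$ on $\mathbb{Z}/L\mathbb{Z}$ are precisely the cosets of $d\mathbb{Z}/L\mathbb{Z}$. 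Any orbit that meets the gap interval $[k,L-1]$ consists, by iterated application of the shift identity, entirely of indices carrying the letter $z$; any orbit lying entirely in $[0,k-1]$ carries a common (possibly non-$z$) letter throughout. Hence $c_i$ depends only on $i \bmod d$, so $W$ has period $d$ and is an $(L/d)$-fold repetition of a length-$d$ word. Since $d \le v < L$ we have $L/d \ge 2$, so $W$ is a proper power.

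This contradicts the primitivity of $\alpha$: since $\alpha$ generates the stabilizer of the geodesic line $l$ in the torsion-free free group $\pi_1(F)$, and stabilizers of axes in a free group are maximal cyclic, $\alpha$ must be primitive and its cyclically reduced word $W$ cannot be a proper power. The main obstacle I expect is the second step: the coherent overlap only pins down $c_i = c_{(i+v)\bmod L}$ on the subset $[0,k-1]$, and one must combine this partial periodicity with the $z$-valued gap region to extract a genuine cyclic period $d$ for $W$, in particular for orbits of the shift that never leave $[0,k-1]$ and so are not directly forced to take the value $z$.
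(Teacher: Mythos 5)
Your proof is correct, but it takes a genuinely different route from the paper's. The paper's argument is a two-line abelianization count: writing $W=wx^{n}$ (gap word a power of a single letter, exactly as in your first step) and $W^{\prime }=wz$ for the word read along $B$, conjugacy forces $z$ and $x^{n}$ to have the same image in the abelianization; since $z$ is reduced of length $n$, this forces $z=x^{n}$, so the gap words of $l$ and $m$ begin with the same letter, contradicting the maximality of the overlap $A\cap B$. You instead run a shift-orbit periodicity argument to show $W$ would be a proper power, contradicting primitivity of $\alpha $; your handling of the orbits (backward induction from a gap index for orbits meeting $[k,L-1]$, full periodicity on orbits contained in the overlap) is sound, and primitivity is legitimate since $\alpha $ generates the full stabilizer of $l$, which is maximal cyclic. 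Two remarks: first, your assertion that the shift $v$ lies in $\{1,\ldots ,L-1\}$, i.e.\ $v\neq 0$, is not justified and is actually essential (if $v=0$ then $d=L$ and no proper power arises); the justification is precisely the paper's observation that the gap words of $l$ and $m$ cannot begin with the same letter, since distinct oriented edges of $T$ leaving the terminal vertex of $A\cap B$ carry distinct letters --- so your route quietly uses the same maximality-of-overlap fact that the paper's route contradicts directly. Second, your argument buys a slightly more general conclusion (any coherent overlap plus single-letter gap forces $W$ to be periodic), but at the cost of invoking primitivity, which the paper's abelianization argument never needs; for this lemma the paper's computation is considerably shorter.
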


\begin{proof}
If $l$ does not cross any gap quadrilateral at a cusp then each edge of each
gap quadrilateral crossed by $l$ yields the same letter in $\widetilde{W}$,
say $x$, so that the gap word of $l$ is a positive power of $x$.

Let $w$ denote the overlap subword of $\widetilde{W}$ associated to the
interval $A\cap B$. Then, by cycling $W$ if needed, we can assume that $%
W=wx^{n}$. Reading along the axis $B$ yields that, after cycling, $W^{\prime
}=wz$ for some word $z$ that is the gap word for $m$. As $W^{\prime }$ is
conjugate to $W$, they have the same image in the abelianisation of the free
group $\pi _{1}(F)$. Thus $z=x^{n}$, but then there is no gap. This
contradiction completes the proof of the lemma.
\end{proof}

Next we consider some other special cases.

\begin{lemma}
\label{gapwordisxy} Let $\gamma $ be a closed geodesic on a hyperbolic
once-punctured torus $F$, and let $l$ and $m=gl$ be geodesics in $\mathbb{H}%
^{2}$ above $\gamma $ whose images in $T$ overlap with coherent
orientations. If $l$ has only one gap quadrilateral, then $l$ and $m$ are
disjoint.
\end{lemma}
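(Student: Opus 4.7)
The plan is a proof by contradiction: assume $l$ and $m$ meet, and derive a contradiction from the rigid structure imposed by the single gap quadrilateral. My first move is to pin down both gap words combinatorially. By Lemma \ref{theremustbecuspsifFistorus}, the unique gap quadrilateral $Q$ is crossed at a cusp with some vertex $v$, so the two edges of $Q$ crossed by $l$ are consecutive cusp edges at $v$ and therefore project to different cutting geodesics $\lambda_x$ and $\lambda_y$. After an elementary automorphism of $\pi_1(F)$ we may take the gap word of $l$ to be $xy$. Since $W$ and $W'$ are cyclic rotations of one another, and hence of equal length, the gap word of $m$ also has length $2$; the requirements of matching abelianisation $(1,1)$, not sharing an initial letter with the gap word of $l$, and not sharing a final letter, force it to be $yx$.

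Translating to $\mathbb{H}^{2}$, I would place $v$ at $\infty$ in the upper half-plane, so that $Q$ becomes the ideal quadrilateral bounded by vertical rays $E_1$ at $x=x_1$ and $E_2$ at $x=x_2>x_1$ together with a pair of semicircles below. Then $l$ is a Euclidean semicircle with diameter endpoints $l_-<x_1$ and $l_+>x_2$. Since $m$ is disjoint from $Q$ yet crosses every cutting geodesic in the overlap region (which lies in the half-plane $x<x_1$), $m$ is a semicircle with both endpoints in the arc $(-\infty,x_1)$; symmetrically, $\alpha m$ has both endpoints in $(x_2,+\infty)$. An intersection $l\cap m\neq\emptyset$ would force one endpoint of $m$ to lie in $(-\infty,l_-)$ and the other in $(l_-,x_1)$. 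To rule this out I would construct a geodesic $\mu$ perpendicular to $l$ that separates these two endpoints on $\partial\mathbb{H}^{2}$ and is disjoint from $m$, then invoke the bisector-style argument of Lemma \ref{disjointgeodesicsbisector} to reach a contradiction.

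The main obstacle is the construction of this perpendicular separator. Unlike the three-punctured sphere, the once-punctured torus admits no ambient reflection symmetry; in particular the diagonals of $Q$ need not be orthogonal, so the clean perpendicular-bisector argument used in Section \ref{section:3-puncturedsphere} is not directly available. The separator must instead be built from the partial, Weierstrass-induced reflection symmetries of the cusp-edge family at $v$ — the invariance of each of the even and odd subfamilies of cusp edges under reflection in any one of its members — combined with the divergence structure at the end of the overlap, where $l$ exits the last overlap quadrilateral $Q^*$ via the $x$-labelled edge $E_1$ while $m$ exits via an adjacent $y$-labelled edge $F_1$. I expect the precise combinatorial shape of $w$ forced by the cyclic identity $w\cdot xy\sim w\cdot yx$ to determine which reflection in the cusp-edge family yields the desired perpendicular, completing the contradiction.
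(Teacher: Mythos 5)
Your combinatorial reduction is exactly the paper's: Lemma \ref{theremustbecuspsifFistorus} gives a cusp crossing, an elementary automorphism makes the gap word of $l$ equal to $xy$, and the abelianisation argument together with the fact that the two gap words cannot share an initial or final letter forces $W=wxy$ and $W'=wyx$. Up to that point you are on track. The gap is in the second half: the actual content of the lemma --- that $l$ and $m$ are disjoint --- is never established. You explicitly defer the key step (``the main obstacle is the construction of this perpendicular separator'') and end with an expectation rather than an argument, so the proof is incomplete as written.

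Moreover, the route you sketch is misaimed in two ways. First, the paper's own finish needs no symmetry of the tessellation and no perpendicular at all: from $W=wxy$ and $W'=wyx$ one reads off that at the forward end of the overlap chain $l$ exits through an $x$-labelled edge while $m$ exits through a $y$-labelled edge, and at the backward end $l$ entered through a $y$-labelled edge while $m$ entered through an $x$-labelled edge; this configuration (Figure \ref{gap_is_positive}a) places both endpoints of $m$ on the same side of $l$, which is the whole proof. The Weierstrass-induced reflection symmetries of the cusp-edge family that you invoke are used later in the torus section (Lemma \ref{coherentloopsonpuncturedtorus}) but play no role here, and the diagonals-not-orthogonal issue you worry about is simply not an obstruction to the paper's argument. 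Second, your endgame is internally problematic: a geodesic $\mu$ whose endpoints at infinity separate the two endpoints of $m$ necessarily meets $m$, so the object you propose to construct (``separates these two endpoints \ldots{} and is disjoint from $m$'') can only exist vacuously; and the bisector-style argument of Lemma \ref{disjointgeodesicsbisector} that you plan to invoke bounds a projection length, which is not the disjointness conclusion this lemma asserts. You should replace the third paragraph by the direct exit-edge analysis at the two ends of the overlap.
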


\begin{proof}
Lemma \ref{theremustbecuspsifFistorus} implies that $l$ crosses the gap
quadrilateral at a cusp. By applying an elementary automorphism of $\pi
_{1}(F)$, if needed, we can assume that the two edges of the gap
quadrilateral crossed by $l$ yield the gap word $xy$ in $\widetilde{W}$. Let 
$w$ denote the overlap subword of $\widetilde{W}$ associated to the interval 
$A\cap B$. Then, by cycling $W$ if needed, we can assume that $W=wxy$.
Reading along the axis $B$ yields $W^{\prime }=wz$, for some word $z$. As $%
W^{\prime }$ is conjugate to $W$, they have the same image in the
abelianisation of the free group $\pi _{1}(F)$. Thus the word $z$ must be $%
xy $ or $yx$. In the first case, $W^{\prime }$ would equal $W$ contradicting
the fact that $w$ is the entire overlap word. It follows that $W^{\prime
}=wyx$. Now Figure \ref{gap_is_positive}a shows that the two ends of $m$
must lie on the same side of $l$, so $l$ and $m$ must be disjoint.
\end{proof}

\begin{figure}[tbp]
\centering 
\includegraphics[ height=1.6in ]{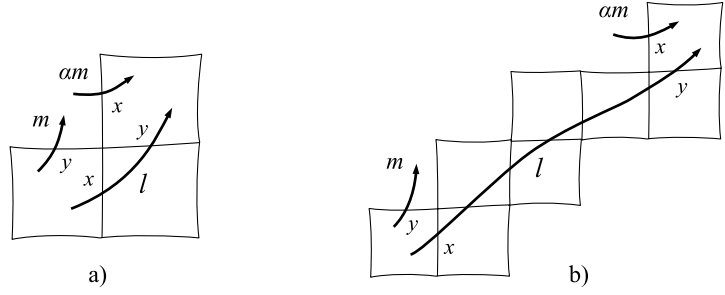}
\caption{{\protect\small If the gap word is positive, starting in $x$ and
ending in $y$.}}
\label{gap_is_positive}
\end{figure}

The argument above can be greatly generalized to obtain the following result.

\begin{lemma}
\label{gapwordequalsxuy}Let $\gamma $ be a closed geodesic on a hyperbolic
once-punctured torus $F$, and let $l$ and $m=gl$ be geodesics in $\mathbb{H}%
^{2}$ above $\gamma $ whose images in $T$ overlap with coherent
orientations. If the gap word of $l$ is a positive word in $x$ and $y$ that
starts with $x$ and ends with $y$, then $l$ and $m$ are disjoint, so the
orthogonal projection of $m$ onto $l$ has length strictly less than $%
l(\gamma )$.
\end{lemma}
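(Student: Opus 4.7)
The strategy is to follow the same template as Lemma~\ref{gapwordisxy}, generalizing from the specific gap word $xy$ to an arbitrary positive word starting with $x$ and ending with $y$. The proof has three parts: an algebraic step constraining the gap word $u'$ of $m$, a geometric step showing $l$ and $m$ are disjoint, and a short projection-packing step.

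For the algebraic step, after cycling $W$ I write $W = wu$, where $u$ is the gap word of $l$, and $W' = wu'$, where $u'$ is the gap word of $m$. Both words have length $L(\alpha)$, so $|u| = |u'|$. Since $W'$ is conjugate to $W$ in $\pi_1(F)$, the words $u$ and $u'$ have the same image in the abelianization. Because $u$ is positive, any occurrence of $x^{-1}$ or $y^{-1}$ in $u'$ would have to be offset by an extra positive letter to match abelianization, contradicting $|u| = |u'|$. Hence $u'$ is positive, with the same numbers of $x$'s and $y$'s as $u$. Finally, $u'$ cannot start with the same letter as $u$ nor end with the same letter (either would extend the overlap $w$, contradicting its definition), so $u'$ must start with $y$ and end with $x$.

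For the geometric step, I show that $l$ and $m$ are disjoint by verifying that both endpoints of $m$ at infinity lie on the same side of $l$. At the ``positive'' end of the overlap region in $\mathbb{H}^2$, $l$ and $m$ both cross a common cutting edge and then diverge: $l$'s next crossing is an $x$-edge (first letter of $u$), while $m$'s is a $y$-edge (first letter of $u'$). Symmetrically, at the ``negative'' end, the last crossings before the overlap are a $y$-edge for $l$ and an $x$-edge for $m$. Because both gap words are positive, neither $l$ nor $m$ reverses direction in the tessellation while traversing its gap, so each winds consistently through a sequence of cusp regions. Extending the picture in Figure~\ref{gap_is_positive}, I would verify that the ray of $m$ emerging from each end of the overlap lies on a side of $l$ determined by the first-letter discrepancy at that end, and that these two sides coincide. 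Thus $l$ and $m$ do not meet, and the bound on the projection follows from disjointness together with Remark~\ref{nocommonendpoint}, by packing translates $\alpha^{k}m$ along $l$ in the manner used earlier in Section~\ref{section:hyperbolic}.

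The main obstacle is executing the geometric step for general positive gap words. In the base case $|u| = 2$ of Lemma~\ref{gapwordisxy}, there is a single gap quadrilateral and the picture is immediate. When $|u|$ is large, $l$ and $m$ traverse many gap quadrilaterals in sequence, and one must rule out any interior crossing during this traversal. The cleanest strategy I would attempt is induction on $|u|$, peeling off a letter from each gap word to reduce to a shorter configuration whose overlap has been enlarged by one, with Lemma~\ref{gapwordisxy} serving as the base case; an alternative is to carry out a direct geometric tracing using the monotone behavior forced by the positivity of both $u$ and $u'$.
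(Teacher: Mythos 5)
Your algebraic step is exactly the paper's argument: write $W=wxuy$ and $W'=wz$, use equality of abelianizations together with $|z|=|xuy|$ to force $z$ positive, and rule out $z$ beginning with $x$ or ending with $y$ because either would enlarge the overlap word $w$; hence $W'=wyu'x$. That part is complete and correct. The final deduction differs slightly from the paper's: once disjointness is known, the paper simply invokes Lemma \ref{disjointgeodesicsbisector} (packing translates of the \emph{bisector} of $l$ and $m$, which are guaranteed disjoint), rather than packing the translates $\alpha^k m$ themselves, which for coherently oriented disjoint geodesics need not be pairwise disjoint. That is a small repair, not a real problem.

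The genuine gap is the one you flagged yourself: the proof that $l$ and $m$ are disjoint. You correctly identify the mechanism --- at one end of the overlap $l$ diverges through an $x$-edge while $m$ diverges through a $y$-edge, and at the other end the roles are reversed ($y$ for $l$, $x$ for $m$), and one must check that the two rays of $m$ emerging from the overlap land on the \emph{same} side of $l$ --- but you stop at ``I would verify'' and offer an induction on $|u|$ as a fallback. The paper needs no induction and no analysis of the interior of the gap at all: the whole point of the configuration $W=wxuy$, $W'=wyu'x$ is that the side of $l$ on which an end of $m$ lies is determined entirely by the single quadrilateral in which $A$ and $B$ first separate, i.e.\ by the letter-pair $(x,y)$ at one end and $(y,x)$ at the other; Figure \ref{gap_is_positive}b records that these two local pictures place both endpoints of $m$ at infinity on the same side of $l$, whence disjointness. (Positivity of $u$ and $u'$ plays no role in this geometric step --- it was only needed to pin down the first and last letters of $z$; this is exactly why the paper can reuse the same figure in the cases $k\equiv 1,3 \pmod 4$ of Lemma \ref{basicsaboutgapquadsintorus}, where the gap words are not positive.) So your plan is pointed in the right direction, but as written the proof is incomplete precisely at its load-bearing step, and the proposed induction is a detour: the divergence at the two ends is a purely local matter and does not require controlling what happens across the intermediate gap quadrilaterals.
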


\begin{proof}
As in the preceding lemma, if $w$ denotes the overlap subword of $\widetilde{%
W}$ associated to the interval $A\cap B$, then $W=wxuy$ where $u$ is a
positive word, and $W^{\prime }=wz$ for some word $z$. As $W^{\prime }$ is
conjugate to $W$, they have the same image in the abelianisation of the free
group $\pi _{1}(F)$. Thus the word $z$ is also positive. Further $z$ cannot
begin with $x$, nor end with $y$, as either would contradict the fact that $%
w $ is the entire overlap word. It follows that $W^{\prime }=wyu^{\prime }x$%
, for some positive word $u^{\prime }$. Now Figure \ref{gap_is_positive}b
shows that the two ends of $m$ must lie on the same side of $l$, so that $l$
and $m$ must be disjoint. Thus Lemma \ref{disjointgeodesicsbisector} implies
the required result.
\end{proof}

Next we consider some special cases for subwords of gap words.

\begin{lemma}
\label{gapcontainsyxyinverse}Let $\gamma $ be a closed geodesic on a
hyperbolic once-punctured torus, and let $l$ and $m=gl$ be two geodesics in $%
\mathbb{H}^{2}$ above $\gamma $. If the gap word of $l$ contains a subword $%
yxy^{-1}x$ or $yxy^{-1}y^{-1}$ then the projection of $m$ to $l$ is shorter
than $\gamma $.
\end{lemma}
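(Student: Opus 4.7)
The plan is to exhibit a geodesic $\mu$ perpendicular to $l$ that is disjoint from $m$ and $\alpha m$ and lies strictly between them; once such a $\mu$ is in hand, the standard argument used in the preceding lemmas shows that the translate $\alpha\mu$ is perpendicular to $l$ and separates $\alpha m$ from $\alpha^2 m$, so the orthogonal projection of $m$ onto $l$ lies in the segment of $l$ between $\mu\cap l$ and $\alpha\mu\cap l$, which has length $l(\gamma)$, with the strict inequality supplied by Remark~\ref{nocommonendpoint}.

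I would first unpack the geometry of the subword. The four letters $y,x,y^{-1},\star$ (with $\star\in\{x,y^{-1}\}$) correspond to $l$ crossing four consecutive cutting edges $E_1,E_2,E_3,E_4$ in succession between gap quadrilaterals $Q_0,\dots,Q_4$, where $E_2$ is of type $x$ and $E_1,E_3$ are of type $y$. Since consecutive letters of different type in the dual-tree reading correspond to cusp crossings, the crossings in $Q_1$ and $Q_2$ occur at cusps, at vertices $v_1\in E_1\cap E_2$ and $v_2\in E_2\cap E_3$; and these $v_1,v_2$ are precisely the two endpoints at infinity of $E_2$, so $l$ effectively sweeps around the $x$-edge $E_2$. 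If $\star=x$ then $l$ also makes a cusp crossing in $Q_3$ at a vertex $v_3\in E_3\cap E_4$, while if $\star=y^{-1}$ then $l$ crosses $Q_3$ through two opposite $y$-type edges.

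The first observation is that $E_2$ is a gap edge in $T$, not shared by $A\cap B$ or $\alpha(A\cap B)$, so neither $m$ nor $\alpha m$ crosses $E_2$; hence $m$ and $\alpha m$ lie on opposite sides of $E_2$, which provides an a priori separating geodesic, just not one perpendicular to $l$. To upgrade $E_2$ to a perpendicular $\mu$, I would use the parity-preserving reflections of the cusp fans at $v_1$ and $v_2$ described at the start of Section~\ref{section:oncepuncturedtorus}: these reflections preserve the families of even-indexed and odd-indexed cusp edges at each vertex, and allow one to locate a point $p$ on $l$ (either between $l\cap E_1$ and $l\cap E_3$, or, when additional control is needed, depending on the position of $E_4$) such that the perpendicular to $l$ at $p$ has its two ideal endpoints in cusp sectors at $v_1$ and $v_2$ that are already known to be avoided by $m$ and $\alpha m$. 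The presence of the fourth edge $E_4$ (whose type is dictated by $\star$) provides the extra geometric data needed to pin $\mu$ into the gap region.

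The main obstacle lies in verifying $\mu\cap m=\mu\cap\alpha m=\emptyset$. In contrast with the three-punctured-sphere arguments such as Lemma~\ref{nocuspcrossingmpliesresultifFissphere}, the diagonals of the once-punctured-torus quadrilaterals are not in general orthogonal, so the elegant right-triangle construction is unavailable; one must replace it by a careful bookkeeping of the sides of $E_1,E_2,E_3,E_4$ on which $m$ and $\alpha m$ are constrained to lie, using only the parity-preserving reflections at $v_1$ and $v_2$. I expect the case $\star=y^{-1}$ to be the trickier one, because the configuration in $Q_3$ is controlled by an opposite-edge crossing rather than by a third cusp, so the position of $E_4$ relative to $\mu$ has to be analysed separately from the case $\star=x$ where three successive cusp crossings organise the geometry more rigidly.
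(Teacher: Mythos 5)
Your overall strategy --- produce a geodesic perpendicular to $l$ that is disjoint from $m$ and $\alpha m$ and separates them, then invoke the standard translation argument --- is the right one and is exactly what the paper does. But your reading of the combinatorics of the subword is wrong, and this derails the construction. In the once-punctured torus, the word read by a path going once around an ideal vertex of the tessellation follows the commutator pattern $x,y,x^{-1},y^{-1},\ldots$ (this is stated explicitly in the proof of Lemma \ref{basicsaboutgapquadsintorus} and is the content of Remark \ref{subwordofpowerofcommutator}). Consequently the subword $yxy^{-1}$, being a subword of the unwrapping of $(xyx^{-1}y^{-1})^{-1}$, records three consecutive crossings of edges that all share a \emph{single} ideal vertex $v$: the two cusp crossings occur at the \emph{same} endpoint of the middle $x$-edge, not at its two distinct endpoints as you assert. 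The configuration you describe, in which $l$ ``sweeps around'' the $x$-edge with cusps at both of its ends, is the one read by the positive word $yxy$, and that situation is handled by the quite different argument of Lemma \ref{gapwordequalsxuy}. The role of the fourth letter being $x$ or $y^{-1}$ --- i.e.\ anything other than $x^{-1}$ --- is precisely to guarantee that $l$ \emph{leaves} the fan of edges at $v$ after the third crossing.

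Second, even with the picture corrected, you explicitly defer the only hard step (``the main obstacle lies in verifying $\mu\cap m=\mu\cap\alpha m=\emptyset$\dots one must replace it by a careful bookkeeping''), so the proposal does not contain a proof. For the record, the paper's construction is this. Let $E_0,E_1,E_2$ be the three edges at $v$ crossed by $l$ (reading $y,x,y^{-1}$), and let $E_3$ be the next edge of the fan at $v$, which by hypothesis is not crossed by $l$; let $\lambda$ be the geodesic joining the endpoints of $E_0$ and $E_3$ other than $v$. The reflection in the bisector of $E_1$ and $E_2$ preserves the fan at $v$ and interchanges $E_0$ with $E_3$, so $\lambda$ meets $E_1$ and $E_2$ at equal angles $\epsilon$ on the side away from $v$; since the region bounded by $E_1$, $E_2$ and $\lambda$ is a triangle with an ideal vertex at $v$, these equal angles must be obtuse. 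Because $l$ exits the third gap quadrilateral through $E_4$ or $E_5$ rather than through $E_3$, the lines $l$, $E_2$ and $\lambda$ bound a triangle with an obtuse angle at $E_2\cap\lambda$, so the foot of the perpendicular from $E_2\cap\lambda$ to $l$ lies strictly between $l\cap\lambda$ and $l\cap E_2$; this perpendicular therefore stays inside the region bounded by $E_0$, $E_4$ and $E_5$, which $m$ and $\alpha m$ do not enter, and hence separates them. Nothing in your proposal produces this (or any) explicit separating perpendicular.
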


\begin{proof}
By hypothesis $l$ crosses two gap quadrilaterals $Q$ and $Q^{\prime }$ at
cusps with the same vertex $v$ and crosses the next gap quadrilateral $%
Q^{\prime \prime }$ either at a cusp with a different vertex or across
opposite edges. See Figure \ref{gap_contains_yxy-1not}. Let $E_{0}$, $E_{1}$
and $E_{2}$ be the edges of $Q$ and $Q^{\prime }$ crossed by $l$ and with
endpoint $v$, let $E_{3}$ be the other edge of $Q^{\prime \prime }$ with one
endpoint at $v$, and let $E_{4}$ and $E_{5}$ be the other edges of $%
Q^{\prime \prime }$. So $m$ and all its translates by the action of the
stabilizer of $l$ lie outside the region of $\mathbb{H}^{2}$ bounded by $%
E_{0}$, $E_{4}$ and $E_{5}$. Let $\lambda $ be the hyperbolic line that
joins the other endpoints of $E_{0}$ and $E_{3}$.

As the reflection in the bisector of $E_{1}$ and $E_{2}$ interchanges $E_{0}$
and $E_{3}$, the angles that $\lambda $ makes with $E_{1}$ and $E_{2}$
inside $Q^{\prime }$ (marked in the picture by $\epsilon $) are equal, and
so $\epsilon $ is larger than $\pi /2$. As $l$ crosses from $E_{0}$ to $%
E_{4} $ or $E_{5}$, it crosses the lines $E_{2}$ and $\lambda $ forming a
triangle whose other angles must be acute. Therefore the perpendicular to $l$
from the point $E_{2}\cap \lambda $ crosses $l$ between $l\cap \lambda $ and 
$l\cap E_{2}$, so it goes through the shaded regions in the picture. Hence
it lies inside the region of $\mathbb{H}^{2}$ bounded by $E_{0}$, $E_{4}$
and $E_{5}$, and separates $E_{0}$ from $E_{4}$ and $E_{5}$. As $m$ lies on
the other side of $E_{0}$, and $\alpha m$ lies on the other side of $E_{4}$
or $E_{5}$, this perpendicular must separate $m$ from $\alpha m$, so as
before, the projection of $m$ to $l$ must be shorter than the translation
length of $\alpha $, which is $l(\gamma )$.
\end{proof}

\begin{figure}[tbp]
\centering 
\includegraphics[ height=1.3in ]{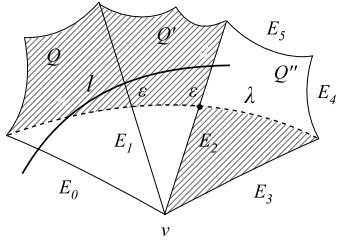}
\caption{{\protect\small If the gap word contains $yxy^{-1}x$ or $%
yxy^{-1}y^{-1}$}}
\label{gap_contains_yxy-1not}
\end{figure}

\begin{lemma}
\label{gapcontainsyx^kyinverse}Let $\gamma $ be a closed geodesic on a
hyperbolic once-punctured torus, and let $l$ and $m=gl$ be two geodesics in $%
\mathbb{H}^{2}$ above $\gamma $. If the gap word of $l$ contains a subword $%
yx^{k}y^{-1}$, with $k>1$, then the projection of $m$ to $l$ is shorter than 
$\gamma $.
\end{lemma}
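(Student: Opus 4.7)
The plan is to extend the geometric argument of Lemma~\ref{gapcontainsyxyinverse} to the longer subword $yx^{k}y^{-1}$. I will first identify the edges of the tessellation crossed by $l$ in producing this subword: write them as $E_0, E_1, \ldots, E_{k+1}$, with $E_0$ a $y$-type edge (contributing $y$), $E_{k+1}$ a $y$-type edge crossed in the opposite direction (contributing $y^{-1}$), and $E_1, \ldots, E_k$ all $x$-type (each contributing $x$). Because the incident edges at each tessellation vertex alternate in type, every middle pair $(E_i, E_{i+1})$ with $1 \le i \le k-1$ must be a pair of opposite edges of a gap quadrilateral, while $(E_0, E_1)$ and $(E_k, E_{k+1})$ are cusp crossings at two vertices $v_0$ and $v_1$. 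The middle gap quadrilaterals will then form a ``ladder'' of opposite-edge crossings, with consecutive rungs related by the hyperbolic isometry corresponding to the generator $x$ in $\pi_1(F)$.

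Next I would construct the analogue of the line $\lambda$ used in the previous lemma. Letting $p_0$ be the endpoint of $E_0$ other than $v_0$, and $p_{k+1}$ the endpoint of $E_{k+1}$ other than $v_1$, I would take $\lambda$ to be the geodesic joining $p_0$ to $p_{k+1}$. The key claim I would then need to establish is that $\lambda$ meets the outermost middle edges $E_1$ and $E_k$ at angles strictly greater than $\pi/2$ on the side through which $l$ passes. Granted this, the triangle with vertices at $l \cap E_1$, $l \cap \lambda$, and $\lambda \cap E_1$ will have acute angles at $l \cap E_1$ and $l \cap \lambda$, so the perpendicular from $\lambda \cap E_1$ onto $l$ will meet $l$ between those two points, and its full extension will lie in the region of $\mathbb{H}^2$ bounded by $E_0$ and $E_{k+1}$ and their outer cusp edges.

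Since $m$ lies beyond $E_0$ and $\alpha m$ beyond $E_{k+1}$, this perpendicular will separate $m$ from $\alpha m$. Exactly as in the previous geometric lemmas of this section, I will then conclude that the orthogonal projection of $m$ onto $l$ is strictly shorter than the translation length of $\alpha$ along $l$, which is $l(\gamma)$.

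The hard part will be verifying the angle claim on $\lambda$. In Lemma~\ref{gapcontainsyxyinverse} a single reflection at the common cusp vertex $v$ interchanging $E_0$ and $E_3$ fixed $\lambda$ and produced the angle bound. Here $E_0$ and $E_{k+1}$ lie at distinct vertices $v_0 \neq v_1$, so one must instead combine the cusp reflections at $v_0$ and at $v_1$ (described in the preamble to Section~\ref{section:oncepuncturedtorus}) with the hyperbolic isometry translating along the middle ladder. Checking that this composite symmetry fixes $\lambda$ and forces the obtuse angle condition is where I expect the main technical difficulty; for $k = 2$ it should reduce to reflection in the common perpendicular bisector of $E_1$ and $E_2$, and for larger $k$ one iterates using the translational symmetry of the ladder.
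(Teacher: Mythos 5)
Your overall strategy---produce a geodesic perpendicular to $l$ that lies in the region bounded by $E_{0}$ and $E_{k+1}$ and hence separates $m$ from $\alpha m$, then quote the standard translation-length argument---is the right one, and your identification of the ladder of gap quadrilaterals is correct. But the central step is missing, and the claim you would need is almost certainly false as stated. You ask for a \emph{single} transversal $\lambda$, joining the far endpoints of $E_{0}$ and $E_{k+1}$, meeting \emph{both} $E_{1}$ and $E_{k}$ at obtuse angles on the relevant side, and you propose to extract this from a symmetry of the configuration that swaps its two ends. For a generic once-punctured torus no such symmetry exists: as emphasized at the start of Section \ref{section:oncepuncturedtorus}, the only symmetries of the tessellation one can count on are the covering translations together with the limited reflections of the cusp-edge families at a single vertex, and reflection in the common perpendicular of the opposite edges of a quadrilateral is not among them even in your base case $k=2$. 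There is also a prior issue you do not address: whether $l$ crosses your $\lambda$ at all (which the foot-of-perpendicular argument requires) depends on the cyclic order at infinity of the four endpoints of $E_{0}$ and $E_{k+1}$, which you have not pinned down.

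The paper avoids all of this by using the covering translation $\tau$ carrying $Q_{0}$ to $Q_{1}$ (hence $Q_{i-1}$ to $Q_{i}$ along the ladder, and $E_{1}$ to $E_{k}$ under $\tau^{k-1}$). It introduces \emph{two} auxiliary geodesics, $\lambda$ joining $\tau^{-1}v$ to $\tau v$ and $\lambda'$ joining $\tau^{-1}v'$ to $\tau v'$; since the isometry $\tau^{k-1}$ carries the pair $(\lambda ,E_{1})$ to $(\lambda',E_{k})$, the two relevant angles $\epsilon$ and $\epsilon'$ are \emph{supplementary} rather than both obtuse, so at least one of them is $\geq \pi /2$, and whichever one is gives a perpendicular from $\lambda \cap E_{1}$ or from $\lambda'\cap E_{k}$ to $l$ that stays in the gap region and separates $m$ from $\alpha m$. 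This ``one of two candidates must work'' structure is exactly what replaces the end-swapping symmetry that does not exist; without it, or an argument of comparable force, your proof does not close.
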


\begin{proof}
By hypothesis $l$ crosses a gap quadrilateral $Q_{0}$ at a cusp with vertex $%
v$, then crosses $k-1$ gap quadrilaterals $Q_{1},Q_{2},\ldots ,Q_{k-1}$
across opposite edges and then crosses another gap quadrilateral $Q_{k}$ at
a cusp with vertex $v^{\prime }\neq v$, as in Figure \ref%
{gap_contains_yxky-1}.

Let $E_{0}$ and $E_{1}$ be the edges of $Q_{0}$ crossed by $l$, with $E_{1}$
separating $Q_{0}$ from $Q_{1}$, and let $E_{k-1}$ and $E_{k}$ be the edges
of $Q_{k}$ crossed by $l$, with $E_{k-1}$ separating $Q_{k-1}$ from $Q_{k}$.
Let $\tau $ be the covering translation that takes $Q_{0}$ to $Q_{1}$. Note
that $\tau $ must also take $Q_{i-1}$ to $Q_{i}$, for $2\leq i\leq k$. In
particular, $\tau ^{k-1}$ takes $E_{1}$ to $E_{k}$. Let $\lambda $ be the
geodesic joining $\tau ^{-1}v$ to $\tau v$ and let $\lambda ^{\prime }$ be
the geodesic joining $\tau ^{-1}v^{\prime }$ to $\tau v^{\prime }$. Thus $%
\tau ^{k-1}$ also takes $\lambda $ to $\lambda ^{\prime }$. Also $l$ must
cross $\lambda $ and $\lambda ^{\prime }$. Therefore the angles formed by $%
E_{1}$ and $\lambda $, and by $E_{k}$ and $\lambda ^{\prime }$ are equal,
and so the angles marked in Figure \ref{gap_contains_yxky-1} by $\epsilon $
and $\epsilon ^{\prime }$ are supplementary. If $\epsilon \geq \pi /2$, the
perpendicular from the point $\lambda \cap E_{1}$ to $l$ goes through the
shaded regions between $\lambda $ and $E_{1}$, and if $\epsilon ^{\prime
}\geq \pi /2$, the perpendicular from the point $\lambda ^{\prime }\cap
E_{k} $ to $l$ goes through the shaded regions between $\lambda ^{\prime }$
and $E_{k}$. So one of these two perpendiculars lies in the region of $%
\mathbb{H}^{2}$ bounded by $E_{0}$ and $E_{k+1}$ and separates $E_{0}$ from $%
E_{k+1}$, thus separating $m$ from $\alpha m$. As before, this implies that
the projection from $m$ to $l$ must be shorter than $l(\gamma )$.
\end{proof}

\begin{figure}[h]
\centering
\includegraphics[height=0.9in]{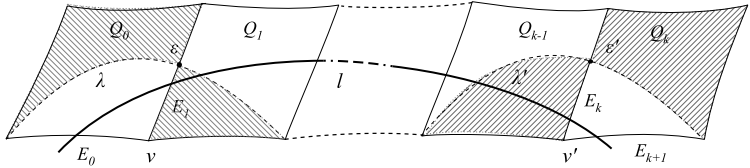}
\caption{{\protect\small If the gap word contains $yx^{k}y^{-1}$, $k>1$.}}
\label{gap_contains_yxky-1}
\end{figure}

\begin{lemma}
\label{gapcontainsx^2andy^2}Let $\gamma $ be a closed geodesic on a
hyperbolic once-punctured torus, and let $l$ and $m=gl$ be two geodesics in $%
\mathbb{H}^{2}$ above $\gamma $. If the gap word of $l$ contains the
subwords $x^{2}$ and $y^{2}$ then the projection of $m$ to $l$ has length
less than $\gamma $.
\end{lemma}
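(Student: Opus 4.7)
The plan is to follow the same template used in Lemmas \ref{gapcontainsyxyinverse} and \ref{gapcontainsyx^kyinverse}: exhibit a geodesic perpendicular to $l$ that is contained in the region of $\mathbb{H}^2$ bounded by the first and last cutting edges crossed by $l$ during the gap, and that separates those two outer edges. Since $m$ and $\alpha m$ lie outside these two edges (on opposite sides), such a perpendicular will separate $m$ from $\alpha m$, and the standard projection argument will then force the projection of $m$ onto $l$ to be shorter than the translation length $l(\gamma)$ of $\alpha$.

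First I would identify the two relevant gap quadrilaterals. The subword $x^{2}$ forces the existence of a gap quadrilateral $Q_{x}$ which $l$ traverses through a pair of opposite $\lambda_{x}$-edges $E^{-}$ and $E^{+}$, with both crossings contributing a positive $x$; the subword $y^{2}$ similarly yields a gap quadrilateral $Q_{y}$ which $l$ traverses through opposite $\lambda_{y}$-edges $F^{-}$ and $F^{+}$. By applying the elementary automorphism that swaps $x$ and $y$, and possibly exchanging the roles of $m$ and $\alpha m$, we may assume that $Q_{x}$ precedes $Q_{y}$ along $l$ in the direction in which the gap word is read.

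The key geometric input is the pair of common perpendiculars $\nu_{x}$ and $\nu_{y}$ of the pairs $(E^{-},E^{+})$ and $(F^{-},F^{+})$. Each of these pairs consists of ultra-parallel opposite edges of an ideal quadrilateral, so $\nu_{x}$ and $\nu_{y}$ exist; $\nu_{x}$ passes through $Q_{x}$ and meets $l$, and $\nu_{y}$ passes through $Q_{y}$ and meets $l$. Using the reflection symmetries of the cusp-edge families at the common cusp vertices of adjacent edges established at the start of Section \ref{section:oncepuncturedtorus}, $\nu_{x}$ can be identified with a reflection axis that interchanges $E^{-}$ and $E^{+}$, and similarly for $\nu_{y}$. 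An angle-sum argument analogous to the supplementarity $\epsilon + \epsilon' = \pi$ of Lemma \ref{gapcontainsyx^kyinverse} will then force at least one of the two candidate angles (the angle $l$ makes with $\nu_{x}$ on the side facing the outer gap edges, or the corresponding angle at $\nu_{y}$) to be at least $\pi/2$, so that the perpendicular to $l$ dropped from $l \cap \nu_{x}$ or from $l \cap \nu_{y}$ must pass through a region analogous to the shaded regions of Figures \ref{gap_contains_yxy-1not} and \ref{gap_contains_yxky-1}, lying entirely between the two outer edges of the gap.

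The main obstacle will be the geometric case analysis: depending on whether $Q_{x}$ and $Q_{y}$ are adjacent in the sequence of gap quadrilaterals or separated by further gap quadrilaterals, and on the signs of any letters of the gap word lying strictly between the $x^{2}$ and $y^{2}$ blocks, one must pin down exactly which of $\nu_{x}$ or $\nu_{y}$ furnishes the required perpendicular, and verify that the dropped perpendicular lies on the correct side of each outer edge. Once that choice is justified and the perpendicular is shown to lie in the bounded region, the rest of the argument (applying $\alpha$ and comparing with the translation length $l(\gamma)$) proceeds exactly as in the preceding two lemmas.
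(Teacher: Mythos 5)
Your overall template is the right one (this is indeed how the paper argues: trap a perpendicular to $l$ inside a gap quadrilateral so that it separates $m$ from $\alpha m$), but the two steps you leave to be ``verified'' are exactly where your choice of auxiliary geodesics breaks down. First, the common perpendicular $\nu_x$ of the opposite edges $E^-,E^+$ of $Q_x$ need not meet $l$ at all: $l\cap Q_x$ and the common-perpendicular segment are just two chords of a convex quadrilateral joining the same pair of opposite sides, and such chords can be disjoint. (Also, as a side point, reflection in the common perpendicular \emph{preserves} each of $E^-$ and $E^+$ rather than interchanging them; the reflection swapping them is in a different geodesic.) This is precisely why the paper works with the \emph{diagonals} $\lambda,\lambda'$ of the quadrilateral instead: each diagonal separates one pair of opposite edges from the other inside $Q$, so $l$, crossing from $E^-$ to $E^+$, is forced to cross both diagonals and forms a triangle with them, and the candidate perpendicular is the one dropped to $l$ from the vertex $\lambda\cap\lambda'$ of that triangle.

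Second, your supplementarity claim is asserted but not grounded in anything: there is no symmetry relating the angle $l$ makes with $\nu_x$ to the angle it makes with $\nu_y$, since these are common perpendiculars of two different pairs of edges and $l$ meets them (if at all) at unrelated angles. In the paper the supplementary pair is the pair of angles formed at the single point $\lambda\cap\lambda'$ by the two diagonals of one tile: because every tile is a covering translate of every other, the quadrilateral $Q'$ containing the $y^2$ crossing has diagonals meeting at the same angles as those of $Q$, but $l$ crosses $Q$ between one pair of opposite sides and $Q'$ between the other pair, so the triangle angle relevant in $Q$ and the one relevant in $Q'$ are the two supplementary angles at the diagonal intersection. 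Hence one of them is at least $\pi/2$, which is exactly the condition guaranteeing that the foot of the altitude from $\lambda\cap\lambda'$ (resp.\ $\tau\lambda\cap\tau\lambda'$) lands in the interior of the opposite side of the triangle and the perpendicular stays between the two outer edges. Without replacing your $\nu_x,\nu_y$ by the diagonals and identifying this specific pair of supplementary angles, the argument does not close.
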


\begin{proof}
If the gap word for $l$ contains the subwords $x^{2}$ and $y^{2}$ then $l$
crosses a gap quadrilateral $Q$ "from left to right" and crosses another gap
quadrilateral $Q^{\prime }$ "from the bottom to the top" crossing both
diagonals of each quadrilateral. See Figure \ref{gap_contains_x2_y2}. There
is a covering translation $\tau $ that takes $Q$ to $Q^{\prime }$ and takes
the diagonals $\lambda $ and $\lambda ^{\prime }$ of $Q$ to the diagonals of 
$Q^{\prime }$, so the angles between their diagonals are equal. Hence either 
$\epsilon \geq \pi /2$ or $\epsilon ^{\prime }\geq \pi /2$. In the first
case, the perpendicular to $l$ from $\lambda \cap \lambda ^{\prime }$ goes
through the shaded regions of $Q$, and in the second case the perpendicular
to $l$ from $\tau \lambda \cap \tau \lambda ^{\prime }$ goes through the
shaded regions of $Q^{\prime }$. So one of these perpendiculars to $l$ lies
in a region of $\mathbb{H}^{2}$ that is not crossed by $m$ or $\alpha m$,
and so it separates $m$ from $\alpha m$. As before, this implies that the
projection of $m$ to $l$ must be shorter than $l(\gamma )$.
\end{proof}

\begin{figure}[ptb]
\centering
\includegraphics[ height=1.6in ]{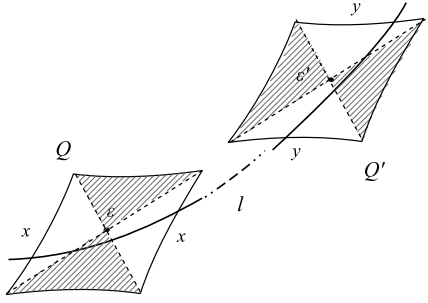}
\caption{{\protect\small If the gap word contains the subwords $x^2$ and $%
y^2 $.}}
\label{gap_contains_x2_y2}
\end{figure}

\begin{lemma}
\label{axesinTdonotoverlapinTorus}Let $\gamma $ be a closed geodesic on a
hyperbolic once-punctured torus, and let $l$ and $m$ be crossing geodesics
in $\mathbb{H}^{2}$ above $\gamma $ whose images $A$ and $B$ in $T$ do not
overlap. Then the orthogonal projection of $m$ onto $l$ has length strictly
less than $l(\gamma )$.
\end{lemma}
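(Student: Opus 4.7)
My plan is to follow the strategy of Lemma~\ref{axesinTdonotoverlapinSphere}, with Lemma~\ref{gapcontainsx^2andy^2} playing the role of the perpendicularity argument that was special to the three-punctured sphere. Since $l$ and $m$ cross but their images $A$, $B$ in $T$ share no edge, $A\cap B$ consists of at most a single vertex $v^{*}$, and $l$ and $m$ must cross inside the corresponding quadrilateral $Q^{*}$. A case check on how two geodesics can meet inside an ideal quadrilateral without sharing a boundary edge forces them to cross through opposite pairs of edges of $Q^{*}$, so $l$ uses one opposite pair and $m$ uses the other. After an elementary automorphism of $\pi_1(F)$, I may assume $l$'s crossing of $Q^{*}$ contributes the subword $x^{2}$ to the cyclically reduced word $W$ of $\alpha$, and $m$'s crossing contributes $y^{2}$ to $W'$.

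Let $g\in\pi_1(F)$ satisfy $gm=l$, chosen so that $gp$ lies between $p=l\cap m$ and $\alpha p$ on $l$. Then $gQ^{*}$ is a quadrilateral on the axis $A$ strictly between $v^{*}$ and $\alpha v^{*}$, so it is a gap quadrilateral for $(l,m)$; moreover, since the isometry $g$ carries $m$'s crossing of $Q^{*}$ to $l$'s crossing of $gQ^{*}$, the latter is again straight-through opposite edges, contributing $y^{2}$ to $W$. I then apply the geometric core of Lemma~\ref{gapcontainsx^2andy^2}: the covering translation $g$ identifies the diagonals of $Q^{*}$ with those of $gQ^{*}$, so the angles between the diagonals agree at their intersections, while $l$ traverses $Q^{*}$ and $gQ^{*}$ in the two distinct edge-directions. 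The supplementary-angle analysis then forces either the angle $\epsilon'$ at $gQ^{*}$'s diagonal-intersection or the angle $\epsilon$ at $Q^{*}$'s to be at least $\pi/2$.

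If $\epsilon'\geq\pi/2$, the perpendicular to $l$ from the diagonal-intersection of $gQ^{*}$ lies in the shaded region inside the gap quadrilateral $gQ^{*}$, hence is disjoint from both $m$ and $\alpha m$ and separates them, yielding the required bound by the now-standard argument bounding the projection of $m$ in terms of the translation length of $\alpha$. The main obstacle is the remaining case $\epsilon\geq\pi/2$ with $\epsilon'<\pi/2$: the analogous perpendicular lies inside $Q^{*}$, which is crossed by $m$, so one must verify that this perpendicular is nevertheless disjoint from $m$. I would approach this either by arguing that the angle condition $\epsilon\geq\pi/2$ forces the perpendicular's foot on $l$ to lie on the side of $p$ opposite to the arc of $m$ inside $Q^{*}$, so that the perpendicular still avoids $m$; or, as a safer backup, by using the $\alpha$-translated perpendicular inside $\alpha Q^{*}$ (automatically disjoint from $m$ since $m$ does not meet $\alpha Q^{*}$) together with the original perpendicular to sandwich the projection of $m$ within an interval on $l$ of length strictly less than $l(\gamma)$.
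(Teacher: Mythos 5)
Your setup, and your handling of the case $\epsilon'\geq\pi/2$, match the paper's argument; your identification of $gQ^{*}$ as an explicit gap quadrilateral where $l$ reads $y^{2}$ is a concrete version of the paper's observation that $W$ and $W'$ are conjugate and hence each contains both $x^{\pm2}$ and $y^{\pm2}$. The genuine gap is the case $\epsilon\geq\pi/2$, which you correctly flag but do not resolve: neither of your fixes works. For fix (a): since $A$ and $B$ share no edge of $T$, the geodesic $m$ never crosses the two edges $E_{1},E_{2}$ of $Q^{*}$ crossed by $l$, so $m$ lies entirely in the strip between $E_{1}$ and $E_{2}$ --- which is exactly the region where the perpendicular from the diagonal intersection of $Q^{*}$ lives. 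The angle condition $\epsilon\geq\pi/2$ controls the foot of that perpendicular but says nothing about where $m$ goes after leaving $Q^{*}$, so there is no reason the two geodesics are disjoint; this claim would need a proof and I do not believe it holds in general. For fix (b): the perpendicular $\mu$ in $Q^{*}$ is disjoint from $\alpha m$, and $\alpha\mu$ is disjoint from $m$, but $\mu$ may cross $m$ and $\alpha\mu$ may cross $\alpha m$. The only conclusions available are that the projection of $m$ onto $l$ lies on one side of the foot of $\alpha\mu$ and (applying $\alpha^{-1}$ to the other statement) on one side of the foot of $\alpha^{-1}\mu$; that traps the projection in an interval of length $2l(\gamma)$, not $l(\gamma)$.

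The paper resolves this case by dualizing rather than patching: when the angle seen by an $x$-direction crossing is at least $\pi/2$, it drops the perpendicular to $m$ (not to $l$) inside a quadrilateral $Q''$ that $m$ crosses in the $x$-direction. Such a $Q''$ exists because $W'$ is conjugate to $W$ and so contains $x^{\pm2}$, and it is automatically a gap quadrilateral for $m$: it is distinct from $Q^{*}$, hence not crossed by $l$ or by $\beta l$, where $\beta$ generates the stabilizer of $m$. The resulting perpendicular to $m$ separates $l$ from $\beta l$ and bounds the projection of $l$ onto $m$ by $l(\gamma)$, and since the orthogonal projections of $l$ onto $m$ and of $m$ onto $l$ have the same length, the lemma follows. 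In your notation you could simply take $Q''=g^{-1}Q^{*}$, which $m=g^{-1}l$ crosses in the $x$-direction.
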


\begin{proof}
As the axes $A$ and $B$ in $T$ do not overlap, $l$ and $m$ cross in the
interior of a quadrilateral $Q$, and $l$ and $m$ cross opposite edges of $Q$%
. Therefore the word for $l$ contains a subword $x^{\pm 2}$ and the word for 
$m$ contains a subword $y^{\pm 2}$, or vice versa. As the words for $l$ and $%
m$ are conjugate, each must contain the other subword too. The
quadrilaterals $Q^{\prime }$ and $Q^{\prime \prime }$ crossed by $l$ and $m$
corresponding to these other subwords are gap quadrilaterals for $l$ and $m$
respectively.

Now the proof of the previous lemma shows that either there is a
perpendicular to $l$ in the region of $\mathbb{H}^{2}$ between the edges of $%
Q^{\prime }$ crossed by $l$, or there is a perpendicular to $m$ in the
region of $\mathbb{H}^{2}$ between the edges of $Q^{\prime \prime }$ crossed
by $m$. As before, this implies that the orthogonal projection of $m$ to $l$
is shorter than $l(\gamma )$ or that the orthogonal projection of $m$ to $l$
is shorter than $l(\gamma )$.
\end{proof}

By using the above lemmas, we will show how to reduce to the case when the
gap word has a very special form.

\begin{lemma}
\label{positivewords}Let $\gamma $ be a closed geodesic on a hyperbolic
once-punctured torus $F$, and let $l$ and $m=gl$ be geodesics in $\mathbb{H}%
^{2}$ above $\gamma $ whose images in $T$ overlap with coherent
orientations. By applying elementary automorphisms of $\pi _{1}(F)$, and
replacing $\alpha $ by its inverse if needed, we can arrange that one of the
following cases hold:

\begin{enumerate}
\item the projection of $m$ to $l$ has length less than $\gamma $.

\item the gap word of $l$ is positive and of the form $yx^{d_{1}}yx^{d_{2}}y%
\ldots yx^{d_{r}}y$, or  the form $x^{d_{1}}yx^{d_{2}}y\ldots yx^{d_{r}}$, where each $%
d_{i}\geq 1$.

\item the gap word of $l$ is a subword of $(xyx^{-1}y^{-1})^{N}$, for some $%
N $.
\end{enumerate}
\end{lemma}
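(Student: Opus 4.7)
The plan is a combinatorial classification of the cyclically reduced word $W = wz$ (where $w$ is the overlap word and $z$ is the gap word of $l$), using the forbidden-subword lemmas together with the available symmetries: elementary automorphisms of $\pi_1(F)$, the swap $(l,m) \leftrightarrow (m,l)$ (which substitutes the gap word $z'$ of $m$ for $z$), and the replacement of $\alpha$ by $\alpha^{-1}$ (which reverses the gap word and inverts each letter).

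First, by Lemma \ref{gapcontainsx^2andy^2}, combined with the swap $x \leftrightarrow y$ if needed, we may assume the gap word of $l$ has no $y^{\pm 2}$ subword; otherwise Case 1 holds. Then by Lemma \ref{gapcontainsyx^kyinverse}, applied with inversions of $x$ and $y$, every $x$-block of length at least $2$ must be flanked by two $y$-letters of the same sign.

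We split the remaining argument into two principal cases. In Case A, the gap word contains no $x^{\pm 2}$ subword, so it alternates between single $x^{\pm 1}$ and $y^{\pm 1}$ letters. The only active forbidden pattern is then the $4$-letter subword $y^{\epsilon}x^{\delta}y^{-\epsilon}x^{\delta}$ of Lemma \ref{gapcontainsyxyinverse} together with its elementary-automorphism variants. Encoding the sign pattern by the pair $(a_i, b_i)$ giving the signs of the $i$-th $y$-letter and the $i$-th $x$-letter, the forbidden pattern prohibits any transition $(a_i, b_i) \to (a_{i+1}, b_{i+1})$ that flips exactly one of the two coordinates. Applied cyclically to $W$ (and to $W' = wz'$, invoking the $(l,m)$-symmetry to extract extra information from the gap word of $m$), this rigidity forces every cyclic transition to be either the identity or the simultaneous flip of both coordinates. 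Consequently, $W$ is cyclically either constant, which after elementary automorphism is the positive alternating word $yxyxy\ldots$ of Case 2 with every $d_i = 1$, or it is the fully synchronized alternating commutator word, so that $z$ is a subword of $(xyx^{-1}y^{-1})^N$ and we are in Case 3.

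In Case B the gap word contains some $x^{\pm 2}$ subword. The sandwich rule forces the two $y$-letters flanking this block to share a sign. Using Lemma \ref{gapcontainsyxyinverse} to forbid most $y$-sign changes across single-$x$ blocks elsewhere, and propagating the constraint cyclically through $W$ (and through $W'$ via the $(l,m)$-symmetry), we conclude that every $y$-letter in $z$ has the same sign; inverting $y$ if needed, all $y$-letters become positive. A parallel argument on the $x$-exponents, combined with inverting $x$ and, if necessary, replacing $\alpha$ by $\alpha^{-1}$, arranges that each $x$-exponent $d_i$ is positive, placing us in the form of Case 2.

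The main obstacle is the rigidity argument in Case A: one must verify that when the local forbidden-pattern rules are imposed cyclically on $W$, and also on $W'$ via the swap $(l,m) \leftrightarrow (m,l)$, the only globally admissible periodic sign sequences are the constant and the synchronously alternating ones. Controlling the interaction of the forbidden patterns at the cyclic wrap-around, which the lemmas only see linearly in $z$ or $z'$, is where the argument requires the most care.
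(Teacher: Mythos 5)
Your overall strategy coincides with the paper's: use Lemma \ref{gapcontainsx^2andy^2} to force one generator to occur only with exponents $\pm 1$, split according to whether the other generator has a higher power, and drive each case with the forbidden subwords of Lemmas \ref{gapcontainsyxyinverse} and \ref{gapcontainsyx^kyinverse}. However, there are two genuine gaps. First, those lemmas constrain only the gap word of $l$, a linear subword of $W$; nothing restricts the overlap word $w$, so imposing the rules ``cyclically on $W$'' (or on $W'$) is not legitimate. This is repairable, since the conclusion concerns only the gap word, and restricting to it makes the wrap-around difficulty you flag as the main obstacle disappear. But what you then still need in Case A, and what your transition rule as stated does not provide, is the use of \emph{all} sliding four-letter windows: forbidding exactly-one-flip only in the aligned windows $y^{a_i}x^{b_i}y^{a_{i+1}}x^{b_{i+1}}$ admits words such as $yxyxy^{-1}x^{-1}yx$, which are neither positive nor subwords of $(xyx^{-1}y^{-1})^N$. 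It is the offset windows $x^{b_i}y^{a_{i+1}}x^{b_{i+1}}y^{a_{i+2}}$ (equivalently, the observation that whether the $i$-th and $(i+2)$-nd letters agree in sign must be independent of $i$) that force all transitions to be simultaneously ``keep'' or simultaneously ``flip,'' yielding the dichotomy between the positive alternating word and the commutator pattern. You identify this verification as the main obstacle but do not carry it out.

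Second, and more seriously, your endgame in Case B (and in the constant branch of Case A) reaches only the conclusion that the gap word is positive, whereas Case 2 of the statement requires the gap word to begin and end with the \emph{same} letter, i.e.\ the forms $yx^{d_1}y\cdots x^{d_r}y$ or $x^{d_1}y\cdots yx^{d_r}$. A positive gap word of the form $x\cdots y$ is of neither form, and excluding it requires Lemma \ref{gapwordequalsxuy}: if the gap word of $l$ is positive and begins with $x$ and ends with $y$, then $l$ and $m$ are disjoint and case 1 holds. This is exactly how the paper closes its proof, and your proposal never invokes that lemma, so as written it establishes a strictly weaker trichotomy than the one stated.
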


\begin{remark}
\label{subwordofpowerofcommutator}In case 3), the geodesic $l$ crosses all
the gap quadrilaterals at cusps with the same associated vertex.
\end{remark}

\begin{proof}
Suppose that the projection of $m$ to $l$ has length greater than that of $%
\gamma $. We will show that part 2) or 3) of the lemma must hold.

Lemma \ref{gapcontainsx^2andy^2} tells us that the gap word $v$ of $l$
cannot contain both $x^{2}$ and $y^{2}$. By applying elementary
automorphisms, it also follows that $v$ cannot contain both $x^{2}$ and $%
y^{-2}$, nor both $x^{-2}$ and $y^{2}$, nor both $x^{-2}$ and $y^{-2}$. It
follows that one of $x$ and $y$ can only occur with exponents $1$ or $-1$.
We will assume that $y$ can only occur with exponents $1$ or $-1$.

Suppose that $x$ has a power which is not $1$ or $-1$. After an elementary
automorphism, we can assume that $v$ contains $x^{k}$, for some $k\geq 2$.
The proof of Lemma \ref{theremustbecuspsifFistorus} shows that $v$ cannot
equal a power of $x$, so by a further elementary automorphism, and perhaps
inversion of $\alpha $, we can assume that $v$ contains the subword $x^{k}y$.

If $v$ contains the subword $x^{k}yx^{l}$, we claim that $l\geq 1$. For if $%
l\leq -1$, the gap word $v$ contains the subword $x^{2}yx^{-1}$, which is
equivalent to the inverse of $yxy^{-2}$. Now Lemma \ref%
{gapcontainsyxyinverse} shows that this cannot occur.

If $v$ contains the subword $x^{k}yx^{l}y^{m}$, we claim that $m=1$. For if $%
l\geq 2$ and $m=-1$, then $v$ contains the subword $yx^{l}y^{-1}$, which is
not possible by Lemma \ref{gapcontainsyx^kyinverse}. And if $l=1$ and $m=-1$%
, then $v$ contains the subword $xyxy^{-1}$ which is equivalent to the
inverse of $yxy^{-1}x$, and so impossible by Lemma \ref%
{gapcontainsyxyinverse}.

If $v$ contains the subword $x^{k}yx^{l}yx^{n}$, we claim that $n\geq 1$. If 
$l\geq 2$, the same argument we used above shows that $n\geq 1$. If $l=1$
and $n\leq -1$, then $v$ contains the subword $yxyx^{-1}$, which is
equivalent to the inverse of $yxy^{-1}x$, and so impossible by Lemma \ref%
{gapcontainsyxyinverse}.

Now a simple inductive argument shows that the entire segment of $v$ which
begins at $x^{k}$ must be positive. By applying the same argument to the
inverse of $\alpha $, we conclude that we can arrange that the entire gap
word $v$ is positive.

Next suppose that $x$ also can only occur with exponents $1$ or $-1$. Lemma %
\ref{gapcontainsyxyinverse} tells us that $v$ cannot contain the subword $%
yxy^{-1}x$, nor any word which is equivalent to $yxy^{-1}x$ or its inverse.
It follows that in the gap word $v$, either all powers of $x$ have the same
sign, and the same holds for all powers of $y$, or that $v$ is a subword of $%
(xyx^{-1}y^{-1})^{N}$, for some $N$. In the first case, we can apply an
elementary automorphism to arrange that $v$ is positive. The second case is
part 3) of the statement of the lemma.

Finally we recall that if the gap word $v$ of $l$ is positive, so is that of 
$m$. It follows from Lemma \ref{gapwordequalsxuy} that $v$ must begin and
end with $x$ or begin and end with $y$, which completes the proof of the
lemma.
\end{proof}

At this point we need to notice that the form of the gap words depends on
our initial choice of generators $x$ and $y$ for $\pi _{1}(F)$. We will now
discuss how to change generators so as to simplify the gap words we are
considering.

The basic operation is to replace one pair of opposite edges of a
quadrilateral by diagonals. We will say that a reduced word in $x$ and $y$
is of \textit{mixed sign}, if it contains positive and negative powers of $x$%
, or if it contains positive and negative powers of $y$. Clearly a reduced
word of mixed sign cannot be equivalent to a positive word.

\begin{lemma}
\label{reduction_of_positive_words}Let $\gamma $ be a closed geodesic on a
hyperbolic once-punctured torus $F$, and let $l$ and $m=gl$ be geodesics in $%
\mathbb{H}^{2}$ above $\gamma $ whose images in $T$ overlap with coherent
orientations. If the gap word of $l$ is $yx^{d_{1}}yx^{d_{2}}y\ldots
yx^{d_{r}}y$, or $x^{d_{1}}yx^{d_{2}}y\ldots yx^{d_{r}}$, where each $%
d_{i}\geq 1$, then the above change of basis yields a new gap word which is
either of mixed sign, or is positive and shorter then the original gap word.
\end{lemma}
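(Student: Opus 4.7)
The plan is to realize the geometric change of basis (replacing a pair of opposite edges of the fundamental quadrilateral $Q$ by diagonals) as the algebraic Nielsen substitution $x = x' y^{-1}$ (equivalently $x' = xy$), and then to apply this substitution to the given gap word and freely reduce. Among the four natural choices of diagonal, this is the one that produces cancellation against the $y$ letters flanking the $x$-blocks of the gap word; the other choice would insert uncancellable $y^2$ blocks and increase length.

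For the first form $W = y x^{d_1} y x^{d_2} y \cdots y x^{d_r} y$, the substitution yields
\[
y (x' y^{-1})^{d_1} y (x' y^{-1})^{d_2} y \cdots y (x' y^{-1})^{d_r} y.
\]
Each block $(x'y^{-1})^{d_i}$ terminates in $y^{-1}$, followed either by an interior $y$ (for $i<r$) or by the trailing $y$ (for $i=r$); all $r$ such $y^{-1} y$ pairs cancel, while the remaining $d_i - 1$ internal $y^{-1}$ letters in each block are untouched. Hence after reduction the word contains exactly $\sum_{i=1}^{r}(d_i - 1)$ letters $y^{-1}$. If every $d_i = 1$, no $y^{-1}$ survives and the reduced word is $y x'^{r}$, which is positive of length $r+1$, strictly less than the original length $2r+1$. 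If some $d_i \geq 2$, at least one $y^{-1}$ survives, so the new word is of mixed sign.

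For the second form $W = x^{d_1} y x^{d_2} y \cdots y x^{d_r}$, the same substitution yields
\[
(x' y^{-1})^{d_1} y (x' y^{-1})^{d_2} y \cdots y (x' y^{-1})^{d_r},
\]
in which cancellation takes place only at the $r - 1$ internal boundaries. Crucially, the terminal $y^{-1}$ of the final block $(x' y^{-1})^{d_r}$ has no following $y$ to annihilate it, so it survives the reduction. Hence the new word always contains at least one $y^{-1}$ and is of mixed sign.

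In both cases the new gap word is either of mixed sign or positive and strictly shorter than the original, as claimed. The only real bookkeeping is to track precisely which $y^{-1} y$ pairs cancel at the block boundaries; once this is identified, counting surviving $y^{-1}$ letters decides the dichotomy immediately. The one subtlety worth flagging is making sure that the algebraic substitution does produce the new gap word (rather than merely re-expressing the old cyclically reduced conjugate of $\alpha$): this uses the fact that the overlap and gap are read off the cyclically reduced word and that the change of basis is compatible with the dual tree structure for the diagonal tessellation.
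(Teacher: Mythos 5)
There is a genuine gap: your identification of the new gap word with the free reduction of the substituted old gap word is incorrect. The gap word is a geometrically defined object --- it is the complement, in the cyclically reduced word $W$ conjugate to $\alpha$, of the overlap word $w$ read off from $A\cap B$ in the tree dual to the current family of cutting geodesics. After the change of basis the dual tree changes, the overlap is re-read in the new tree, and the boundary between overlap and gap moves in a way that depends on the first and last letters of $w$. Writing $W=wv$ and $\phi$ for your Nielsen substitution, the reduction of $\phi(v)$ in isolation ignores both the cancellations across the interface with $\phi(w)$ (for instance, if $v$ ends in $x=x'y^{-1}$ and $w$ begins with $y$, the ``surviving terminal $y^{-1}$'' on which your second case rests cancels into the overlap) and the migration of letters between the new overlap and the new gap. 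This is why the paper's proof must read the new gap word off the tessellation and split into cases according to whether $w$ starts and ends with $x$ or with $y$; no computation performed on $v$ alone can produce an answer that depends on $w$.

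The discrepancy shows up concretely in your conclusions. For the form $x^{d_1}y\cdots yx^{d_r}$ you assert the new gap word is always of mixed sign, whereas the correct new gap word is positive, with each interior $d_i$ reduced by $1$ and with $d_1$, $d_r$ reduced or not according to $w$; one must then separately exclude $r=2$ (where the word need not become shorter) by showing that the gap word of $m$ would otherwise have to begin or end with the same letter as that of $l$, which is impossible. Similarly, for $yx^{d_1}y\cdots yx^{d_r}y$ with some $d_i\geq 2$ and $w$ beginning and ending with $x$, the correct new gap word is the positive word with each $d_i$ reduced by $1$, not a word of mixed sign. Although either alternative of the dichotomy would satisfy the statement, your argument computes a word that is not the new gap word, and the missing $r=2$ analysis is essential in the case where the correct outcome is ``positive and shorter''; the distinction also matters downstream, since Lemma \ref{altogether} iterates this lemma and treats the two outcomes differently.
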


\begin{proof}
We start by noting that the gap words for $l$ and $m$ must begin and end in
distinct letters. As we are assuming that the gap word for $l$ is positive,
it follows that the gap word for $m$ is also positive. In particular, each
gap word must begin and end in $x$ or in $y$. As $W\ $and $W^{\prime }$ are
cyclically reduced, it follows that the overlap word $w$ cannot begin or end
in $x^{-1}$ or in $y^{-1}$.

Now replace the sides of the quadrilaterals corresponding to $x$ by the
diagonals shown dotted in Figure \ref{change_of_basis}, keeping the names of
the generators.

If the gap word is $yx^{d_{1}}yx^{d_{2}}y\ldots yx^{d_{r}}y$, and the
overlap word $w$ starts and ends with $x$, then the new gap word is obtained
from the original by reducing each $d_{i}$ by $1.$ See Figure \ref%
{change_of_basis}a. Thus the new gap word is positive and shorter, as
required. If the overlap word starts or ends with $y$, the new gap word is
of mixed sign. See Figure \ref{change_of_basis}b.

\begin{figure}[ptb]
\centering
\includegraphics[ height=3in ]{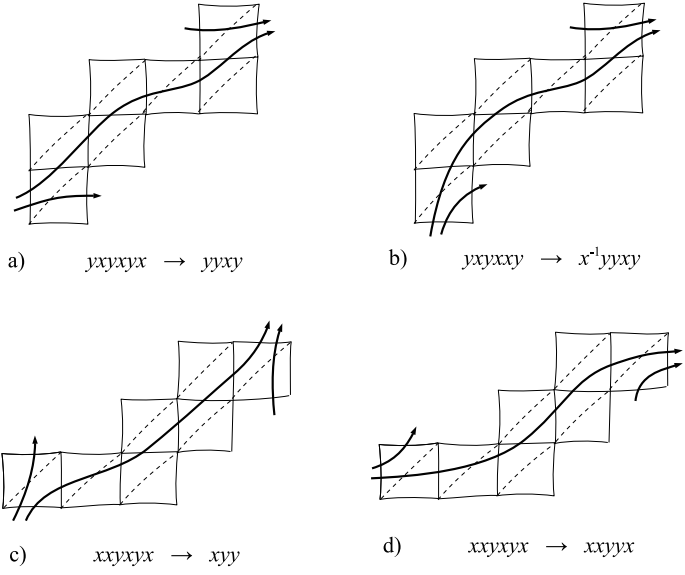}
\caption{{\protect\small A change of basis}}
\label{change_of_basis}
\end{figure}

If the gap word is $x^{d_{1}}yx^{d_{2}}y\ldots yx^{d_{r}}$, then the new gap
word is obtained from the original by reducing each $d_{i}$ by $1$, for $%
1<i<r$, while $d_{1}$ and $d_{r}$ are either reduced by $1$ or remain the
same, depending on how the overlap word starts and ends. See Figures \ref%
{change_of_basis}c,d. Thus the new gap word is positive and is shorter,
except possibly when $r=2$. But this case cannot occur because it would mean
that the gap word $v$ of $l$ equals $x^{d_{1}}yx^{d_{2}}$, so that the gap
word $v^{\prime }$ of $m$ contains only one $y$. Hence $v^{\prime }$ would
have to begin or end in $x$, contradicting the fact that $v$ and $v^{\prime
} $ cannot begin or end with the same letter.
\end{proof}

Now we need to put together everything we have proved so far.

\begin{lemma}
\label{altogether} Let $\gamma $ be a closed geodesic on a hyperbolic
once-punctured torus $F$, and let $l$ and $m=gl$ be geodesics in $\mathbb{H}%
^{2}$ above $\gamma $ whose images in $T$ overlap with coherent
orientations. Then either the projection of $m$ onto $l$ has length strictly
less than $l(\gamma )$, or $l$ crosses all the gap quadrilaterals at cusps
with the same associated vertex.
\end{lemma}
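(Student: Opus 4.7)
The approach combines Lemmas \ref{positivewords} and \ref{reduction_of_positive_words} in a finite descent argument. I would first apply Lemma \ref{positivewords}. Case (1) of that lemma immediately gives the first alternative of the conclusion. Case (3), together with Remark \ref{subwordofpowerofcommutator}, gives the second alternative. So only Case (2) requires additional work: after elementary automorphisms, the gap word of $l$ is positive and of the form $yx^{d_1}yx^{d_2}y\cdots yx^{d_r}y$ or $x^{d_1}yx^{d_2}y\cdots yx^{d_r}$ with each $d_i\geq 1$.

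In Case (2) I would invoke Lemma \ref{reduction_of_positive_words}. The diagonal-for-edge change of basis there produces a new gap word which is either of mixed sign or positive and strictly shorter than the original. If positive and shorter, the new word again fits the combinatorial template of Case (2) (each $d_i$ having been reduced by one, with the endpoint letters behaving according to how the overlap word starts and ends), so Lemma \ref{reduction_of_positive_words} may be reapplied in the new basis. Since the length of a positive gap word is a strictly decreasing positive integer under this iteration, after finitely many steps we either reach a gap word so short that Lemma \ref{gapwordisxy} forces $l$ and $m$ to be disjoint (contradicting the hypothesis that they cross), or we reach a gap word of mixed sign.

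In the mixed-sign case I would reapply Lemma \ref{positivewords} in the current basis. The key observation is that the elementary automorphisms invoked by Lemma \ref{positivewords}, namely swaps and inversions of $x$ and $y$, preserve the property of a reduced word being of mixed sign: they cannot eliminate the coexistence of positive and negative powers of a generator. Hence Case (2) of Lemma \ref{positivewords}, which demands a positive gap word, cannot arise after such automorphisms. The reapplication therefore forces Case (1) or Case (3), which closes the argument.

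The main obstacle will be checking that the shortened gap word produced by Lemma \ref{reduction_of_positive_words} really does remain in the templates of Case (2) so that the inductive step is valid, and that sign patterns transform as claimed under the diagonal basis change. The sign-invariance observation in the last paragraph is the structural point that prevents any cycling between the two lemmas and guarantees termination; once it is justified, the descent is straightforward.
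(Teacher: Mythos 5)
Your overall architecture matches the paper's: dispose of Cases (1) and (3) of Lemma \ref{positivewords} immediately, iterate Lemma \ref{reduction_of_positive_words} on Case (2), and in the mixed-sign outcome reapply Lemma \ref{positivewords} using the observation that elementary automorphisms cannot convert a mixed-sign word into a positive one. That last observation is exactly the paper's closing step, and it is correct.

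However, there is a genuine gap in your termination analysis, and it is precisely the point you flag as ``the main obstacle'' without resolving it. The shortened positive word produced by Lemma \ref{reduction_of_positive_words} does \emph{not} always remain in the Case (2) template. The template requires that one of the generators occur only with exponent $1$; but when some $d_i$ drops to $0$ while another block still has exponent at least $2$, the new word contains both $x^{2}$ and $y^{2}$ (for instance $yxyx^{3}y$ becomes $y^{2}x^{2}y$). Such a word is positive, not of mixed sign, not of the required form for a further application of Lemma \ref{reduction_of_positive_words}, and not short enough for Lemma \ref{gapwordisxy} to apply. Your descent therefore stalls at exactly this point. The paper closes this case by invoking Lemma \ref{gapcontainsx^2andy^2}: a gap word containing both $x^{2}$ and $y^{2}$ already forces the projection of $m$ onto $l$ to be shorter than $l(\gamma)$. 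That lemma is the missing ingredient in your argument; the iteration should be described as terminating either in a mixed-sign word (handled by reapplying Lemma \ref{positivewords}) or in a positive word containing both $x^{2}$ and $y^{2}$ (handled by Lemma \ref{gapcontainsx^2andy^2}), rather than in a word short enough for Lemma \ref{gapwordisxy}.
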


\begin{proof}
Lemma \ref{positivewords} and Remark \ref{subwordofpowerofcommutator}
together tell us that either the projection of $m$ onto $l$ has length
strictly less than $l(\gamma )$, or the gap word of $l$ is positive and of
the form $yx^{d_{1}}yx^{d_{2}}y\ldots yx^{d_{r}}y$, or $x^{d_{1}}yx^{d_{2}}y%
\ldots yx^{d_{r}}$, where each $d_{i}\geq 1$, or $l$ crosses all the gap
quadrilaterals at cusps with the same associated vertex, so it remains to
handle the middle case.

Lemma \ref{reduction_of_positive_words} tells us that in this case, there is
a change of basis which yields a new gap word which is either of mixed sign,
or is positive and shorter then the original gap word. If the new gap word
is positive and one of $x$ and $y$ only occurs with exponent $1$, we can
apply Lemma \ref{reduction_of_positive_words} again. Thus by repeatedly
applying this lemma, we must eventually obtain a gap word of mixed sign or a
positive word which contains both $x^{2}$ and $y^{2}$. In the second case,
Lemma \ref{gapcontainsx^2andy^2} implies that the projection of $m$ onto $l$
has length strictly less than $l(\gamma )$. If the gap word $v$ is of mixed
sign, it cannot be equivalent to a positive word. Thus Lemma \ref%
{positivewords} implies that either the projection of $m$ onto $l$ has
length strictly less than $l(\gamma )$ or that $l$ crosses all the gap
quadrilaterals at cusps with the same associated vertex, thus completing the
proof of the lemma.
\end{proof}

Lemmas \ref{oppositelyorientedaxesinT}, \ref{axesinTdonotoverlapinTorus} and %
\ref{altogether} show that we can reduce to the case when the crossing
geodesics $l$ and $m$ have images in $T$ which overlap with coherent
orientations, and $l$ crosses all the gap quadrilaterals at cusps with the
same vertex $v$. As in the proof of Lemma \ref{basicsaboutgapquadsinsphere},
we label the cusp edges at $v$ by $E_{i}$, $i\in \mathbb{Z}$, so that $%
E_{0},E_{1},...,E_{n}$ are the cusp edges crossed by $l$. See Figure \ref%
{cusps_at_vertex_torus}.

\begin{lemma}
\label{basicsaboutgapquadsintorus}Let $\gamma $ be a closed geodesic on a
hyperbolic once-punctured torus $F$, and let $l$ and $m=gl$ be crossing
geodesics in $\mathbb{H}^{2}$ above $\gamma $ whose images in $T$ overlap
with coherent orientations. If $l$ crosses the $k$ gap quadrilaterals at
cusps with the same vertex $v$, then

1. One of $m$ or $\alpha m$ does not cross any of the cusp edges $%
E_{0},E_{1},...,E_{n}$. And

2. Each of $m$ and $\alpha m$ crosses less than $k-1$ of the remaining cusp
edges.
\end{lemma}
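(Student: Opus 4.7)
The plan is to imitate the proof of Lemma \ref{basicsaboutgapquadsinsphere}: an orientation argument for Claim 1, and a finer tree/word analysis for Claim 2.

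For Claim 1, I would orient $\gamma$ and use the induced orientations on $l$, $m$, and $\alpha m$. As in the sphere case, because $\alpha$ is orientation-preserving and $m$ crosses $l$, the endpoints $m_\pm$ and $\alpha m_\pm$ lie on opposite sides of $l$ in a way that forces $m$ and $\alpha m$ to travel around the cusp $v$ in opposite rotational directions. On the other hand, the coherent overlap of $A$ and $B$ implies that whenever $m$ crosses an edge $E_i \in \{E_0,\ldots,E_n\}$ it does so in the same direction as $l$; applying $\alpha$ gives the analogous statement for the coherent overlap of $A$ and $\alpha B$ (which equals $\alpha(A\cap B)$ since $\alpha A=A$), so if $\alpha m$ crosses some $E_j \in \{E_0,\ldots,E_n\}$ it too crosses in the direction of $l$. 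If both $m$ and $\alpha m$ crossed some edge of $\{E_0,\ldots,E_n\}$, both would circulate about $v$ in the same rotational direction as $l$, contradicting the orientation conclusion. Hence one of $m$ and $\alpha m$ misses $\{E_0,\ldots,E_n\}$, proving Claim 1.

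For Claim 2, assume without loss of generality that $m$ crosses none of $E_0,\ldots,E_n$. Any cusp edge $E_j$ at $v$ crossed by $m$ then satisfies $j<0$ or $j>n$, and since $m$'s passage past $v$ is a single geodesic arc, the set of such $j$ is a block of consecutive integers, say $\{j_1,j_1+1,\ldots,j_1+s\}$, contained in one of the two tails $(-\infty,-1]$ or $[n+1,\infty)$. Each such crossing corresponds to an edge of $B$ lying outside $A$, so it contributes one letter to the gap word of $m$, and since consecutive cusp edges at $v$ alternate between lifts of $\lambda_x$ and $\lambda_y$, these $s+1$ letters form a contiguous alternating block in $x^{\pm 1}$ and $y^{\pm 1}$ inside that gap word. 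The gap word of $m$ has the same length and the same abelianisation as the gap word of $l$, which by Remark \ref{subwordofpowerofcommutator} is a subword of $(xyx^{-1}y^{-1})^N$. The $k$ gap quadrilaterals of $l$ at $v$ correspond to $k$ adjacent letter-pairs in $l$'s gap word, each realised as two consecutive cusp-at-$v$ crossings of $l$; matching this structure against the contiguous block produced by $m$ should yield the bound $s+1<k-1$.

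The main obstacle is nailing down the strict bound $k-1$, which is tighter than what the gap-word length alone provides. I expect the sharpness to come from boundary effects at the two ends of $m$'s cusp block: the letters at its boundary must be compatible both with the alternating sign pattern of $(xyx^{-1}y^{-1})^N$ and with the adjacency to the overlap word enforced by the coherent-orientation hypothesis, and each constraint costs a unit in the count. Making this combinatorial bookkeeping precise, and carrying out the symmetric analysis with $\alpha m$ in place of $m$, is the technical work I would devote to the formal write-up.
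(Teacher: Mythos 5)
Your argument for part 1 is sound and close in spirit to the paper's own: the paper likewise combines the ``opposite rotational directions'' observation from Lemma \ref{basicsaboutgapquadsinsphere} with the fact that $m$ and $\alpha m$ must cross any cusp edge they share with $l$ in the same transverse direction as $l$ (it packages this via the consecutive-block property and the observation that $m$ cannot cross both $E_{-1}$ and $E_{0}$, since $\alpha$ would then fix $v$ and be parabolic). So part 1 is fine.

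Part 2 is where the substance of the lemma lies, and your sketch has a genuine gap there. Two smaller points first: the reduction is not quite ``WLOG $m$ crosses none of $E_{0},\dots ,E_{n}$'' --- you must also dispose of whichever of $m,\alpha m$ does cross some $E_{i}\in \{E_{0},\dots ,E_{n}\}$, which is easy since its crossings then lie in $\{E_{0},\dots ,E_{q-1}\}$ and it meets no remaining cusp edge at all; and you need to pin the block of remaining cusp edges crossed by $m$ against the overlap (it must contain $E_{-1}$, else $E_{-1}$ would separate $m$ from $l$), which is what places the forced letters at the \emph{start} of the gap word of $m$ and, via the opposite rotational direction around $v$, determines them to be $y,x,y^{-1},x^{-1},\dots$ rather than merely ``alternating.'' The serious problem is the mechanism you propose for the bound. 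If $m$ crosses $k-1$ remaining edges, the gap word of $m$ is a forced alternating word of length $k-1$ followed by two letters $zu$, while the gap word of $l$ is the length-$(k+1)$ alternating word read off $E_{0},\dots ,E_{k}$. Comparing abelianisations yields a contradiction only when $k\equiv 0$ or $2\pmod 4$, where $zu$ would have to abelianise to the identity while being reduced. When $k\equiv 1$ or $3\pmod 4$ the abelianisation comparison is perfectly consistent: it forces $zu=yx$ (resp.\ $zu=y^{-1}x^{-1}$), so that $W=wxUy$ and $W^{\prime }=wyU^{\prime }x$ (resp.\ the version with inverses), and no amount of combinatorial bookkeeping of letters and boundary effects rules this out. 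The paper closes these two cases with a geometric argument, the one underlying Lemma \ref{gapwordequalsxuy} and Figure \ref{gap_is_positive}b: words of that shape force both ends of $m$ onto the same side of $l$, so $l$ and $m$ would be disjoint, contradicting the hypothesis that they cross. That geometric step is the missing idea, and without it your plan does not reach the stated bound.
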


\begin{figure}[ptb]
\centering 
\includegraphics[ height=1.6in]{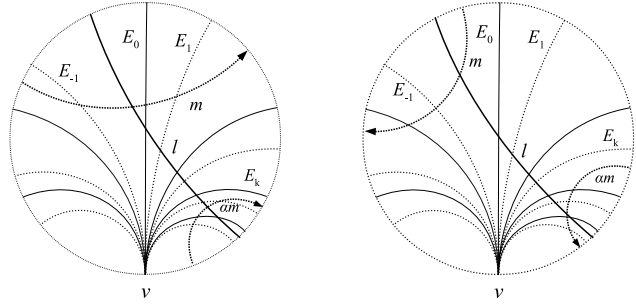}
\caption{{\protect\small The cusps at a vertex for a punctured torus}}
\label{cusps_at_vertex_torus}
\end{figure}

\begin{proof}
1) As in the proof of Lemma \ref{basicsaboutgapquadsinsphere}, let $Q$ be a
gap quadrilateral, and let $E_{q}$ and $E_{q+1}$ be the edges of $Q$ crossed
by $l$. As $Q$ separates $m$ and $\alpha m$, it follows that $m$ can only
cross $E_{i}$'s with $i<q$, and $\alpha m$ can only cross $E_{i}$'s with $%
i>q+1$, as shown in Figure \ref{cusps_at_vertex_torus}a. From that same
proof, we also know that $m$ and $\alpha m$ "travel around $v$ in opposite
directions", as shown in Figure \ref{cusps_at_vertex_torus}a.

Observe that if $m$ crosses two $E_{i}$'s then it crosses all the $E_{j}$'s
between them.

Now suppose that $m$ crosses $E_{0}$ and $E_{-1}$. This implies that the
overlap between $l$ and $m$ starts as they cross $E_{0}$, so that $l$ must
cross all the overlap quadrilaterals and all the gap quadrilaterals at cusps
with vertex $v$. But this implies that $\alpha $ fixes $v$ and so is a
parabolic element, contradicting our assumption that $\gamma $ is a closed
geodesic. We conclude that $m$ cannot cross $E_{0}$ and $E_{-1}$. A similar
argument shows that $\alpha m$ cannot cross $E_{n}$ and $E_{n+1}$. Moreover,
if $m$ crosses $E_{0}$ then $\alpha m$ cannot cross $E_{n}$ and vice versa,
because $m$ and $\alpha m$ cross the cusp edges in opposite directions, but $%
l$ crosses $E_{0}$ and $E_{n}$ in the same direction and $\alpha m$ is a
translate of $m$.

It follows that either $m$ crosses only $E_{i}$'s with $i<0$, or $\alpha m$
crosses only $E_{i}$'s with $i>n$, which proves the first part of the lemma.

2) To prove this, we need only consider $m$, as the roles of $m$ and $\alpha
m$ can be interchanged. If $m$ crosses some $E_{i}$ with $i\geq 0$, then $m$
cannot cross any $E_{i}$'s with $i<0$. Hence $m$ can only cross $E_{i}$'s
with $0\leq i<q$, which proves part 2) of the lemma in this case.

Now suppose that $m$ crosses some $E_{i}$ with $i<0$, so that $m$ cannot
cross any $E_{i}$'s with $i\geq 0$. Then $m$ must cross $E_{-1}$, as
otherwise $E_{-1}$ would separate $m$ from $l$. So the $k$ gap
quadrilaterals must start at $E_{0}$ and end at $E_{k}$, and if $m$ crosses $%
k-1$ cusp edges, they must be $E_{-1},E_{-2},...,E_{-k+1}$.

If $w$ denotes the overlap word for $l$ and $m$, then we can read off the
rest of the word $W$ from the cusp edges that $l$ crosses in the gap, namely 
$E_{0},\ldots ,E_{k}$. As our punctured torus has only one cusp, after an
elementary automorphism of $\pi _{1}(F)$, we can arrange that as $l$ crosses
these cusp edges we read off the letters $%
x,y,x^{-1},y^{-1},x,y,x^{-1},y^{-1},\ldots $. As the images of $l$ and $m$
in $T$ overlap and are oriented coherently, $m$ crosses the cusp edges in
the opposite direction to $l$. Thus as $m$ crosses the cusp edges $%
E_{-1},E_{-2},\ldots ,E_{-k+1}$, we read off the letters $y,x,y^{-1},x^{-1}$
repeatedly. This leads to four cases depending on the size of $k$ modulo $4$.

Case $k=4N$, where $N\geq 1$: Then $W=w(xyx^{-1}y^{-1})^{N}x$. Hence we find
that $W^{\prime }$ has initial segment $w(yxy^{-1}x^{-1})^{N-1}yxy^{-1}$. As 
$l(W^{\prime })=l(W)$, we must have $W^{\prime
}=w(yxy^{-1}x^{-1})^{N-1}yxy^{-1}zu$, where each of $z$ and $u$ denotes one
of $x$ or $y$ or its inverses. As $W^{\prime }$ is a conjugate of $W$,
abelianizing shows immediately that $zu$ has weight $0$ in $x$ and in $y$.
But this is impossible, as $W^{\prime }$, and hence $zu$, is a reduced word.

Case $k=4N+1$, where $N\geq 0$: As in the preceding case, we have that $%
W=w(xyx^{-1}y^{-1})^{N}xy$, and $W^{\prime }$ has initial segment $%
w(yxy^{-1}x^{-1})^{N}$. Therefore $W^{\prime }=w(yxy^{-1}x^{-1})^{N}zu$,
where each of $z$ and $u$ denotes one of $x$ or $y$ or its inverses. As $%
W^{\prime }$ is a conjugate of $W$, abelianizing shows that $zu$ must be
equal to $xy$ or to $yx$. The first case is impossible as $W^{\prime }$ is
reduced, so we must have $zu=yx$. Thus we can write $W$ in the form $wxUy$
and can write $W^{\prime }$ in the form $wyU^{\prime }x$. Now the argument
in the proof of Lemma \ref{gapwordequalsxuy} shows that $l$ and $m$ must be
disjoint, contradicting our assumption that they cross. Note that the use of
Figure \ref{gap_is_positive}b did not depend on the gap word $xUy$ being
positive. That hypothesis was used in the proof of Lemma \ref%
{gapwordequalsxuy} to show that the gap word for $m$ must be of the form $%
yU^{\prime }x$, but in the present situation, that is given.

Case $k=4N+2$, where $N\geq 0$: As in the preceding case, we have $%
W=w(xyx^{-1}y^{-1})^{N}xyx^{-1}$, and $W^{\prime }$ has initial segment $%
w(yxy^{-1}x^{-1})^{N}y$. Thus $W^{\prime }=w(yxy^{-1}x^{-1})^{N}yzu$, where
each of $z$ and $u$ denotes one of $x$ or $y$ or its inverses. As $W^{\prime
}$ is a conjugate of $W$, abelianizing shows immediately that $zu$ has
weight $0$ in $x$ and in $y$. But this is impossible, as $W^{\prime }$, and
hence $zu$, is a reduced word.

Case $k=4N+3$, where $N\geq 0$: As in the preceding case, we have $%
W=w(xyx^{-1}y^{-1})^{N}xyx^{-1}y^{-1}$, and $W^{\prime }$ has initial
segment $w(yxy^{-1}x^{-1})^{N}yx$. Thus $W^{\prime
}=w(yxy^{-1}x^{-1})^{N}yxzu$, where each of $z$ and $u$ denotes one of $x$
or $y$ or its inverses. As $W^{\prime }$ is a conjugate of $W$, abelianizing
shows that $zu$ must be equal to $x^{-1}y^{-1}$ or to $y^{-1}x^{-1}$. The
first case is impossible as $W^{\prime }$ is reduced, so we must have $%
zu=y^{-1}x^{-1}$. Thus we can write $W$ in the form $wxUy^{-1}$ and can
write $W^{\prime }$ in the form $wyU^{\prime }x^{-1}$. Now a similar
argument to that in the proof of Lemma \ref{gapwordequalsxuy} shows that $l$
and $m$ must be disjoint, contradicting our assumption that they cross. Note
that in this case, we need a modified version of Figure \ref{gap_is_positive}%
b. The above contradictions, for any value of $k$, complete the proof of
part 2).
\end{proof}

\begin{figure}[ptb]
\centering 
\includegraphics[ height=1.8in]{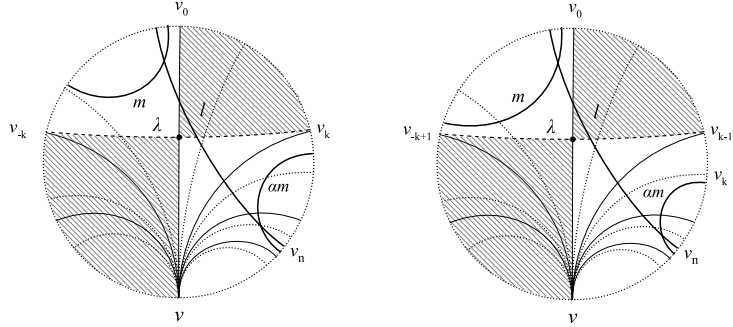}
\caption{ a) $k$ even. \hskip 90pt b) $k$ odd. \hfill}
\label{cusps_at_vertex_torus_2}
\end{figure}

\begin{lemma}
\label{coherentloopsonpuncturedtorus} Let $\gamma $ be a closed geodesic on
a hyperbolic once-punctured torus $F$, and let $l$ and $m=gl$ be geodesics
in $\mathbb{H}^{2}$ above $\gamma $ whose images in $T$ overlap with
coherent orientations. If $l$ crosses all the gap quadrilaterals at cusps
with the same vertex, then the orthogonal projection of $l$ onto $m$ has
length strictly less than $l(\gamma )$.
\end{lemma}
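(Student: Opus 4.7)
The plan is to mimic the analogous three-punctured sphere argument of Lemma \ref{coherentloopson3-puncturedsphere}, replacing its use of the full reflection symmetry of the cusp edges by the parity-restricted symmetries of the torus case discussed at the start of this section, and combining these with the quantitative bounds of Lemma \ref{basicsaboutgapquadsintorus}.

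First I would relabel the cusp edges at $v$ as $\{E_i\}_{i\in\mathbb{Z}}$ so that $l$ crosses $E_0,E_1,\dots,E_k$, where $k$ is the number of gap quadrilaterals, and write $v_i$ for the endpoint of $E_i$ other than $v$. After interchanging $m$ and $\alpha m$ if necessary, Lemma \ref{basicsaboutgapquadsintorus} gives that $m$ crosses none of $E_0,\dots,E_k$ and that $m$ and $\alpha m$ each cross strictly fewer than $k-1$ of the remaining cusp edges. In particular $m$ does not meet $E_j$ for $j\le -(k-1)$, and $\alpha m$ does not meet $E_j$ for $j\ge 2k-1$.

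Next I would pick a reflection $\sigma$ of $\mathbb{H}^2$ whose axis $\beta$ ends at $v$ and which swaps some $E_p$ with $E_q$, choosing $p\le-(k-1)$ and $q\ge 2k-1$. When $k$ is odd, take $\beta$ to be the bisector of $E_{(k-1)/2}$ and $E_{(k+1)/2}$, which swaps $E_i$ with $E_{k-i}$ for every $i$, and set $(p,q)=(-(k-1),\,2k-1)$. When $k$ is even, take $\beta=E_{k/2}$ and pick $(p,q)$ of the same parity as $k/2$ with $p+q=k$, $p\le-(k-1)$ and $q\ge 2k-1$; for instance $(p,q)=(-k,\,2k)$ when $k\equiv0\pmod 4$ and $(p,q)=(-(k-1),\,2k-1)$ when $k\equiv 2\pmod 4$. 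In all three cases $\sigma$ fixes $v$ and swaps $v_p$ with $v_q$, so the geodesic $\lambda$ joining $v_p$ to $v_q$ is orthogonal to $\beta$. Working in the upper half-plane model with $v=\infty$, $\beta$ is a vertical line whose foot lies between $v_0$ and $v_k$, and $\lambda$ is a semicircle with base interval $[v_p,v_q]\supset[v_0,v_k]$; a direct check shows that $l$ crosses both $\beta$ and $\lambda$, so $l$, $\beta$ and $\lambda$ bound a triangle with a right angle at $\beta\cap\lambda$. Dropping the perpendicular $\mu$ from $\beta\cap\lambda$ to $l$ then lands inside this triangle; extended to a complete geodesic, one end of $\mu$ lies in the arc of $\partial\mathbb{H}^2$ between $v_{p-1}$ and $v_p$, and the other in the arc between $v_q$ and $v_{q+1}$. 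Because $m$ does not cross $E_p$ and $\alpha m$ does not cross $E_q$, neither meets $\mu$, so $\mu$ separates $m$ from $\alpha m$. Then $\mu$ and $\alpha\mu$ are disjoint and cobound a substrip of $l$ of length $l(\gamma)$ that contains the orthogonal projection of $m$ onto $l$, and Remark \ref{nocommonendpoint} upgrades the resulting inequality to a strict one.

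The main obstacle is the geometric bookkeeping in the third paragraph. Unlike the three-punctured sphere, the once-punctured torus has no isometry reflecting the full family of cusp edges in a single $E_i$, so $\beta$ must be chosen according to the parity of $k$ (and, in the even case, the residue of $k$ modulo $4$) in order that some available reflection actually swaps a suitable $E_p$ with a suitable $E_q$. The explicit bound of at most $k-2$ crossings of each of $m$ and $\alpha m$ given by part (2) of Lemma \ref{basicsaboutgapquadsintorus} is precisely what allows $p$ and $q$ to be pushed far enough out that $\mu$ avoids both $m$ and $\alpha m$.
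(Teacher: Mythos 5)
Your overall strategy is the paper's: use a reflection symmetry of the cusp edges at $v$ to produce a geodesic $\lambda$ orthogonal to some axis $\beta$ ending at $v$, form a right triangle with $l$, and drop the perpendicular $\mu$ from $\beta \cap \lambda$ to $l$ so that $\mu$ separates $m$ from $\alpha m$. But your specific choice of $\beta$ and $\lambda$ breaks the construction at the crucial step. Work in the upper half-plane with $v=\infty$, so the $E_i$ are vertical lines over the points $v_i$. Since $l$ crosses exactly the cusp edges $E_0,\dots ,E_n$ at $v$ (note $n$ can exceed $k$, because $l$ may share cusp edges beyond $E_k$ with $\alpha m$; your identification of $n$ with $k$ already makes the bound ``$\alpha m$ misses $E_j$ for $j\geq 2k-1$'' unjustified), the feet of $l$ lie in $(v_{-1},v_0)$ and $(v_n,v_{n+1})$. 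Your $\lambda$ has feet $v_p$ and $v_q$ with $q\geq 2k-1$, and $l$ crosses $\lambda$ only if one of its feet lies outside $(v_p,v_q)$, i.e.\ only if $n\geq 2k-1$. In the generic case $n=k$ (and $k\geq 2$) both feet of $l$ lie strictly inside $(v_p,v_q)$, so $l$ is disjoint from $\lambda$, there is no triangle, and the location of $\mu$ is not controlled. Moreover, even when the triangle does exist, your description of the ends of $\mu$ is not what the right-triangle argument yields: the end of $\mu$ on the far side of $l$ from $\beta\cap\lambda$ lands between the foot of $\beta$ and $v_q$ (not beyond $v_q$), while only the other end is pushed past $v_p$. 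So on the $\alpha m$ side the end of $\mu$ is localized merely to an interval that can contain both feet of $\alpha m$, and the claimed separation of $m$ from $\alpha m$ does not follow.

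The paper's proof avoids both problems by an asymmetric choice: it reflects in $E_0$ itself (an endpoint of the gap), so that for $k$ even $\lambda$ joins $v_{-k}$ to $v_k$ (and one uses $k-1$ in place of $k$ when $k$ is odd). Since $l$ crosses $E_k$, one foot of $l$ lies beyond $v_k$ while the other lies in $(v_{-1},v_0)\subset (v_{-k},v_k)$, so $l$ is guaranteed to cross $\lambda$. The resulting perpendicular $\mu$ then has one end in $(v_0,v_k)$ --- strictly to the near side of everything $\alpha m$ can touch, since $\alpha m$ only crosses $E_i$ with $i>k$ --- and its other end beyond $v_{-k}$, past everything $m$ can touch, since $m$ only crosses $E_i$ with $-(k-1)<i<0$. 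In other words, you do not need to push both ends of $\mu$ outward past $m$ and $\alpha m$; you only need to push the end on the $m$ side outward, while on the $\alpha m$ side it suffices that $\mu$ stays inside the gap interval. Trying to push both ends out is what forces $q\geq 2k-1$ and destroys the triangle. To repair your argument you should take $\beta=E_0$ and $\lambda$ the geodesic from $v_{-k}$ to $v_k$ (or $v_{-(k-1)}$ to $v_{k-1}$ for $k$ odd), exactly as in Lemma \ref{coherentloopson3-puncturedsphere}, using only the reflection symmetry of the even-numbered cusp edges in $E_0$.
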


\begin{proof}
Let $E_{0},E_{1},...,E_{n}$ be the cusp edges crossed by $l$. By Lemma \ref%
{basicsaboutgapquadsintorus} we can assume that the $k$ gap quadrilaterals
start at $E_{0}$ and end at $E_{k}$, so $\alpha m$ can only cross $E_{i}$'s
with $i>k$, and $m$ can only cross $E_{i}$'s with $-(k-1)<i<0$. Recall that
the union of all the even cusp edges that end at $v$ is symmetric under
reflection in $E_{0}$. If $v_{i}$ is the endpoint of the cusp edge $E_{i}$,
this reflection sends the point $v_{2i}$ to $v_{-2i}$. See Figure \ref%
{cusps_at_vertex_torus_2}.

If $k$ is even, we let $\lambda $ denote the geodesic joining $v_{k}$ to its
reflected image $v_{-k}$. The symmetry implies that $\lambda $ meets $E_{0}$
orthogonally. As $l$ crosses the cusp geodesic $E_{k}$, it must also cross $%
\lambda $. Let $\mu $ denote the perpendicular to $l$ from the point $%
E_{0}\cap \lambda $. Clearly one endpoint of $\mu $ lies between $v_{0}$ and 
$v_{k}$, and the other endpoint lies between $v$ and $v_{-k}$. As $m$ cannot
cross $E_{-k}$, it follows immediately that $\mu $ cannot meet $m$ nor $%
\alpha m$. As usual, this implies that the orthogonal projection of $l$ onto 
$m$ has length strictly less than $l(\gamma )$.

If $k$ is odd, we apply the above paragraph using the even number $k-1$ in
place of $k$. As $m$ cannot cross $E_{-k+1}$, it again follows immediately
that $\mu $ cannot meet $m$ nor $\alpha m$, which again implies that the
orthogonal projection of $l$ onto $m$ has length strictly less than $%
l(\gamma )$. Thus the result follows in either case, as required.
\end{proof}

Lemmas \ref{disjointgeodesicsbisector}, \ref{oppositelyorientedaxesinT}, \ref%
{axesinTdonotoverlapinTorus}, \ref{altogether} and \ref%
{coherentloopsonpuncturedtorus} together show that Theorem \ref%
{orthogonalprojectionforsingleclosedgeodesic} holds when $M$ is the
three-punctured torus.

\section{The Main result\label{section:Mainresults}}

At this point we are ready to complete the proof of our main result.

\begin{theorem}
\label{orthogonalprojectionforsingleclosedgeodesic}Let $\gamma $ be a closed
geodesic on an orientable hyperbolic surface $M$ and let $l$ and $m$ be
distinct geodesics in $\mathbb{H}^{2}$ above $\gamma $. Then the orthogonal
projection of $l$ onto $m$ has length strictly less than $l(\gamma )$.
\end{theorem}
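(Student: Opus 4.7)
The plan is to assemble the preparatory work of Sections \ref{section:hyperbolic}, \ref{section:3-puncturedsphere} and \ref{section:oncepuncturedtorus}, and then extend the argument to the non-cusped setting flagged at the start of Section \ref{section:hyperbolic}.

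First, the reduction at the beginning of Section \ref{section:hyperbolic} lets me replace $M$ by the cover $F$ with $\pi_1(F)=\langle \alpha, g\rangle$, where $\alpha$ generates the stabilizer of $l$ and $m=gl$; this preserves both $l(\gamma)$ and the projection under consideration. Hence it suffices to handle $M = F$, which is either a three-punctured sphere, a once-punctured torus, or one of their analogues in which some cusps are replaced by funnels bounded by closed geodesics.

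Assume first that every end of $F$ is a cusp. The pair $(l,m)$ then falls into one of four cases. If $l$ and $m$ are disjoint, Lemma \ref{disjointgeodesicsbisector} gives the bound $l(\gamma)$ for the projection of their bisector $b$ onto $m$; since $l$ lies strictly further from $m$ than $b$ does, the projection of $l$ onto $m$ is strictly shorter than the projection of $b$ onto $m$, yielding the required strict inequality. If $l$ and $m$ cross and their images $A,B$ in the dual tree $T$ share an edge on which the orientations disagree, then Lemma \ref{oppositelyorientedaxesinT} gives the sharper bound $l(\gamma)/2$. If $l$ and $m$ cross but $A\cap B$ contains no edge, then Lemma \ref{axesinTdonotoverlapinSphere} or \ref{axesinTdonotoverlapinTorus} applies, according as $F$ is a three-punctured sphere or a once-punctured torus. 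In the remaining (and main) case, $l$ and $m$ cross and $A\cap B$ contains an edge traversed coherently; this is settled for the sphere by Lemmas \ref{twoquadgapinsphere} and \ref{coherentloopson3-puncturedsphere}, and for the torus by Lemmas \ref{altogether} and \ref{coherentloopsonpuncturedtorus}.

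The main outstanding step is the extension to the non-cusped setting. When some end of $F$ is a closed geodesic rather than a cusp, I would modify the cutting family by replacing each infinite cutting geodesic that would end at a non-cusp end by a geodesic segment meeting the bounding closed geodesic orthogonally, so that the ideal quadrilateral $Q$ becomes a convex polygon with a mixture of ideal and right-angled vertices. The dual tree $T$, the cyclically reduced word $W$ associated to $\alpha$, and the notions of gap polygon, overlap word and gap word carry over unchanged, as do the combinatorial free-group arguments of Section \ref{section:trees}. Each geometric separation argument in Sections \ref{section:3-puncturedsphere} and \ref{section:oncepuncturedtorus} produces a perpendicular to $l$ lying inside a region bounded by cutting geodesics and separating $m$ from $\alpha m$, and this separation property survives the replacement of a cusp vertex at infinity by a bounding geodesic, since the new cutting segments still bound the required regions. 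With these modifications every lemma applies verbatim, and the proof goes through in general.
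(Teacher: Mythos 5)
Your proposal is correct and follows essentially the same route as the paper: the same reduction to the two-generator cover $F$, the same four-way case analysis assembled from the same lemmas (disjoint via Lemma \ref{disjointgeodesicsbisector}, oppositely oriented overlap via Lemma \ref{oppositelyorientedaxesinT}, non-overlapping axes via Lemmas \ref{axesinTdonotoverlapinSphere} and \ref{axesinTdonotoverlapinTorus}, and the coherent-overlap case via Lemmas \ref{twoquadgapinsphere}, \ref{coherentloopson3-puncturedsphere}, \ref{altogether} and \ref{coherentloopsonpuncturedtorus}), and the same extension to ends bounded by closed geodesics by taking cutting geodesics orthogonal to the boundary of the convex core. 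The only detail worth adding is the paper's observation that $\gamma$ itself cannot be a boundary geodesic of the convex core (else it would be simple and $l$, $m$ could not cross), which is needed so that $\alpha$ behaves as in the cusped case.
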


\begin{proof}
The results in the preceding three sections yield a proof of the theorem in
the special cases when $M$ is finitely covered by a three-punctured sphere
or by a once-punctured torus. In general, as discussed at the start of
section \ref{section:hyperbolic}, we need to consider a cover of $M$ which
is a surface $F$ homeomorphic to a sphere with three discs removed or to a
torus with one disc removed, but the hyperbolic metric on $F$ need not have
finite volume. Thus the once-punctured torus may be replaced by a surface
with no cusp, and a closed geodesic as the boundary of its convex core, and
the three-punctured sphere may be replaced by a surface with less than three
cusps, and some closed geodesics as the boundary of the convex core. The
crucial step which allows one to proceed in the same way as in the preceding
sections is to choose the cutting geodesics in $F$ to be orthogonal to any
closed geodesic boundary components of the convex core of $F$. Again this
yields a tiling of $\mathbb{H}^{2}$ by quadrilaterals, but these
quadrilaterals may be ultra ideal, i.e. have vertices beyond infinity. The
diagonals of an ultra ideal quadrilateral join opposite ends, and are
orthogonal to any closed geodesic boundary components of the convex core of
the quadrilateral.

Recall that the hyperbolic three-punctured sphere is the double of an ideal
triangle. Similarly, a hyperbolic three-holed sphere $F$ is the double of a
triangle, some of whose vertices are ultra ideal. We choose two of the
common edges of these triangles to be the cutting geodesics for $F$, so they
cut $F$ into a quadrilateral $Q$ with some ultra ideal vertices which admits
a reflectional symmetry in a diagonal. In particular, the diagonals for $Q$
meet at right angles. The convex core of a one-holed torus may not admit any
reflectional symmetries.

Now all the lemmas in sections \ref{section:hyperbolic}, \ref%
{section:3-puncturedsphere} and \ref{section:oncepuncturedtorus} can be
proved in essentially the same way, but the references to cusps of the
quadrilaterals will need to be replaced by references to the vertices of the
quadrilaterals. References to the cusp edges, which are geodesics with one
end at the cusp, will need to be replaced by references to edges which go to
the same vertex of the tiling of the universal covering of $F$. In the case
of a three-holed sphere, the symmetries of the quadrilaterals show that the
set of edges of the tiling with a common vertex is invariant under
reflections in any of them. In the case of a one-holed torus, it has a
rotational symmetry of order two. Thus, as discussed at the start of section %
\ref{section:oncepuncturedtorus}, the cutting geodesics for the tiling of $%
\mathbb{H}^{2}$ by quadrilaterals admit some limited symmetries. Namely if
one considers the family $\ldots E_{-2},E_{-1},E_{0},E_{1},E_{2},\ldots $ of
those geodesics that end at a vertex of the tiling, the even numbered ones
are invariant under reflection in any of them, and the same holds for the
odd numbered ones. Further the entire family of $E_{i}$'s is invariant under
reflection in the bisector of two consecutive $E_{i}$'s. Finally recall that
some of our earlier arguments depended on the fact that the element $\alpha $
of $\pi _{1}(F)$ carried by the closed geodesic $\gamma $ is not a parabolic
element. We weil also need the fact that $\gamma $ cannot be a component of
the convex core of $F$. For then $\gamma $ would be simple, so that two
distinct geodesics in $\mathbb{H}^{2}$ which lie above $\gamma $ cannot
cross.
\end{proof}

We can now deduce the following bounds on the self-intersection angles of a
hyperbolic geodesic in terms of its length.

\begin{corollary}
\label{bounds for angles} If $\gamma $ is a closed oriented geodesic on an
orientable hyperbolic surface, and $\phi $ is the angle formed by the two
outgoing arcs of $\gamma $ at a self-intersection point, then $\Pi (\frac{%
l(\gamma )}{2})<\phi <\pi -\Pi (\frac{l(\gamma )}{4})$.
\end{corollary}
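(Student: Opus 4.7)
The plan is to realize each self-intersection of $\gamma$ as a crossing of two distinct lifts $l,m\subset\mathbb{H}^2$ above $\gamma$, compute the orthogonal projection length of $m$ onto $l$ explicitly in terms of $\phi$, and then read off the two bounds from Theorem \ref{orthogonalprojectionforsingleclosedgeodesic} and Lemma \ref{oppositelyorientedprojection}. First I would lift a self-intersection point $q\in M$ to a point $p\in\mathbb{H}^2$ that lies on two distinct geodesic lines $l$ and $m$ above $\gamma$, oriented consistently with $\gamma$. The outgoing directions of the two strands of $\gamma$ at $q$ then correspond to the outgoing directions $l_+$ and $m_+$ at $p$, and the angle at $p$ from $l_+$ to $m_+$ is exactly $\phi$.

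Next I would compute the length of the orthogonal projection of $m$ onto $l$. Working in the upper half-plane with $l$ the imaginary axis and $p=i$, the geodesic $m$ is a Euclidean semicircle through $i$ meeting $l$ at angle $\phi$, whose ideal endpoints on $\mathbb{R}$ are $-\tan(\phi/2)$ and $\cot(\phi/2)$. The hyperbolic perpendicular from an ideal point $x\in\mathbb{R}$ to $l$ is the semicircle of Euclidean radius $|x|$ centered at $0$, so it meets $l$ at height $|x|$. Thus the projection of $m$ onto $l$ is the segment on $l$ with endpoints $i\tan(\phi/2)$ and $i\cot(\phi/2)$, of hyperbolic length $2|\log\tan(\phi/2)|$. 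Using the identity $\tan(\Pi(d)/2)=e^{-d}$, this length equals $2\Pi^{-1}(\min(\phi,\pi-\phi))$.

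Theorem \ref{orthogonalprojectionforsingleclosedgeodesic} immediately gives $2\Pi^{-1}(\min(\phi,\pi-\phi))<l(\gamma)$, and since $\Pi$ is strictly decreasing with image $(0,\pi/2)$ this yields $\min(\phi,\pi-\phi)>\Pi(l(\gamma)/2)$, which is the lower bound $\phi>\Pi(l(\gamma)/2)$ (the case $\phi\ge\pi/2$ is automatic because $\Pi$ takes values below $\pi/2$). For the tighter upper bound, the key observation is that when $\phi>\pi/2$ one has $\cot(\phi/2)<1<\tan(\phi/2)$, so as we traverse $m$ in the positive direction from $m_-$ to $m_+$ the foot of perpendicular on $l$ moves from $i\tan(\phi/2)$ down to $i\cot(\phi/2)$, i.e., in the $l_-$ direction. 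Hence the projection of $m$ onto $l$ is oppositely oriented relative to $l$, and Lemma \ref{oppositelyorientedprojection} improves the bound to $2\Pi^{-1}(\pi-\phi)<l(\gamma)/2$. Rearranging gives $\phi<\pi-\Pi(l(\gamma)/4)$; the case $\phi\le\pi/2$ is automatic since $\Pi(l(\gamma)/4)<\pi/2$.

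The only real subtlety is noticing that the asymmetry between $l(\gamma)/2$ in the lower bound and $l(\gamma)/4$ in the upper bound comes precisely from upgrading Theorem \ref{orthogonalprojectionforsingleclosedgeodesic} to the sharper oppositely-oriented Lemma \ref{oppositelyorientedprojection} in the obtuse-angle case; the remaining work is standard hyperbolic trigonometry with the angle of parallelism.
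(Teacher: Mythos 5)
Your proof is correct and follows exactly the paper's route: the lower bound comes from Theorem \ref{orthogonalprojectionforsingleclosedgeodesic} applied to the two lifts through a preimage of the self-intersection point, and the upper bound comes from Lemma \ref{oppositelyorientedprojection} via the supplementary angle $\pi-\phi$ in the obtuse case. The paper states this in two sentences; you have simply supplied the (correct) half-plane computation identifying the projection length as $2\Pi^{-1}(\min(\phi,\pi-\phi))$ and the orientation check that makes Lemma \ref{oppositelyorientedprojection} applicable when $\phi>\pi/2$.
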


\begin{proof}
The first inequality follows immediately from Theorem \ref%
{orthogonalprojectionforsingleclosedgeodesic}. The second inequality follows
by applying Lemma \ref{oppositelyorientedprojection} to the supplementary
angle $\pi -\phi $.
\end{proof}

We can also deduce the following bounds on the lengths of polygons formed by
the lines above $\gamma$ in $\mathbb{H}^2$.

\begin{corollary}
\label{boundsforpolygons}If $\gamma $ is a closed geodesic on an orientable
hyperbolic surface $M$, then the triangles formed by the geodesic lines
above $\gamma $ in $\mathbb{H}^{2}$ have sides shorter than $l(\gamma )$,
and the $n$--gons have sides shorter than $(n-2)l(\gamma )$.
\end{corollary}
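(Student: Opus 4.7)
The plan is to bound each side of the polygon by covering it with orthogonal projections of the other $n-1$ lines, and then applying Theorem \ref{orthogonalprojectionforsingleclosedgeodesic} to each projection. Label the sides cyclically as $s_1,\dots,s_n$, with $s_i$ lying on the line $l_i$ above $\gamma$, and set $P_i = l_i \cap l_{i+1}$ (indices mod $n$), so that $s_1$ has endpoints $P_n$ and $P_1$. By Remark \ref{nocommonendpoint}, two distinct geodesic lines above $\gamma$ cannot share a point at infinity, so each vertex $P_i$ lies in $\mathbb{H}^2$, which means that the convex polygon is the convex hull of its finitely many vertices and is therefore compact.

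For each interior point $X$ of $s_1$, I would erect the geodesic perpendicular to $l_1$ at $X$ and follow it into the polygon; by convexity and compactness this inward ray exits through some side $s_i$ with $i \neq 1$ at a point $Q \in l_i$. Then $X$ is the foot of the perpendicular from $Q$ to $l_1$, so $X$ lies in the orthogonal projection of $l_i$ onto $l_1$. Hence $s_1 \subseteq \bigcup_{i=2}^{n}\mathrm{proj}_{l_1}(l_i)$, and
\[
|s_1| \;\le\; \sum_{i=2}^{n} \bigl|\mathrm{proj}_{l_1}(l_i) \cap s_1\bigr|.
\]
The two adjacent lines $l_2$ and $l_n$ meet $l_1$ at the endpoints $P_1$ and $P_n$ of $s_1$, and rotation by $\pi$ about each such intersection is an isometry interchanging the two lines, so the projections of $l_2$ and $l_n$ onto $l_1$ are intervals symmetric about $P_1$ and $P_n$ respectively. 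Each of these two projections therefore contributes at most half its length to $s_1$, which is strictly less than $l(\gamma)/2$ by Theorem \ref{orthogonalprojectionforsingleclosedgeodesic}. The remaining $n-3$ projections, coming from non-adjacent lines, each contribute at most their full length, strictly less than $l(\gamma)$. Summing yields
\[
|s_1| \;<\; \frac{l(\gamma)}{2} + (n-3)\,l(\gamma) + \frac{l(\gamma)}{2} \;=\; (n-2)\,l(\gamma),
\]
which specializes correctly to $|s_1| < l(\gamma)$ in the triangle case $n=3$, with no contribution from non-adjacent sides.

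The only delicate step is the construction of the inward perpendicular, and this is already handled by the compactness observation at the end of the first paragraph. After that, the bound is purely arithmetic, and the corollary really amounts to a packaging of Theorem \ref{orthogonalprojectionforsingleclosedgeodesic} together with the symmetry of the projection interval about a crossing point. The argument treats triangles and $n$-gons uniformly, so the two assertions of the corollary follow together from the same computation.
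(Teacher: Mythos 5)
Your proof is correct and follows essentially the same route as the paper: bounding the side by the sum of the orthogonal projections of the other lines onto it, with the two adjacent lines each contributing at most half of their projection because the projection interval is symmetric about the crossing point, and then applying Theorem \ref{orthogonalprojectionforsingleclosedgeodesic}. Your explicit covering argument via the inward perpendicular and the compactness remark just fills in details the paper leaves implicit.
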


\begin{proof}
The length of a side $s$ of a closed polygon $P$ in $\mathbb{H}^{2}$ is
bounded above by the sum of the lengths of the orthogonal projections of the
other sides to the line containing $s$. These lengths are bounded by the
lengths of the projections of the lines containing them to $s$, and in the
case of the two sides adjacent to $s$, by half of that length.
\end{proof}

If the polygon $P$ has oriented boundary (with the orientations of the sides
induced by an orientation of $\gamma $) then Lemmas \ref%
{disjointoppositelyorientedprojection} and \ref{oppositelyorientedprojection}
show that the sides of $P$ are shorter than $\frac{n-1}{2}l(\gamma )$.

Now we can deduce the following result.

\begin{theorem}
\label{orthogonalprojectionfortwoclosedgeodesics}Let $\gamma $ and $\delta $
be closed geodesics on an orientable hyperbolic surface $M$, and let $l$ and 
$m$ be distinct geodesics in $\mathbb{H}^{2}$ above $\gamma $ and $\delta $
respectively. Then the orthogonal projection of $l$ onto $m$ has length
strictly less than $l (\gamma ) +l (\delta )$.
\end{theorem}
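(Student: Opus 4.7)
The proof divides into the case where $l$ and $m$ are disjoint and the case where they meet; per the introduction, the crossing case is the most interesting and uses only two easy lemmas. I will sketch the crossing case; the disjoint case proceeds by a parallel argument built on Lemma \ref{disjointgeodesicsbisector}.

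Suppose $l$ and $m$ cross at a point $p$ at angle $\theta \in (0,\pi/2]$. Let $\alpha, \beta \in \pi_1(M)$ generate the stabilizers of $l$ and $m$, with translation lengths $l(\gamma)$ and $l(\delta)$. Let $\lambda$ and $\lambda'$ be the two (mutually perpendicular) bisectors of $l$ and $m$; both pass through $p$, making angles $\theta/2$ and $\pi/2 - \theta/2$ with $m$. Denote by $P$, $P_\lambda$, $P_{\lambda'}$ the lengths of the orthogonal projections of $l$, $\lambda$, $\lambda'$ onto $m$.

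The first step is to compare $P$ with $P_\lambda$. By the hyperbolic right-triangle identity $\cos\varphi = \tanh(d)/\tanh(s)$, applied in the limit $s \to \infty$, the orthogonal projection onto $m$ of any geodesic crossing $m$ at $p$ at angle $\varphi$ is the interval on $m$ centered at $p$ of length $2\operatorname{arctanh}(\cos\varphi)$. Applied to $l, \lambda, \lambda'$ this yields $P = 2\operatorname{arctanh}(\cos\theta)$, $P_\lambda = 2\operatorname{arctanh}(\cos(\theta/2))$, $P_{\lambda'} = 2\operatorname{arctanh}(\sin(\theta/2))$. All three intervals are centered at $p$, and since $\cos\theta < \cos(\theta/2)$ on $(0,\pi/2]$, the projection of $l$ is properly contained in that of $\lambda$. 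Hence $P < P_\lambda \le P_\lambda + P_{\lambda'}$ (the strict inequality also holding trivially at $\theta = \pi/2$, where $P = 0$).

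The second step is to show $P_\lambda + P_{\lambda'} \le l(\gamma) + l(\delta)$ by adapting the argument of the crossing-bisector lemma from Section \ref{section:hyperbolic}. The commutator $\kappa = \alpha\beta\alpha^{-1}\beta^{-1}$ lies in $\pi_1(M)$ and is hyperbolic or parabolic, since $\pi_1(M)$ is torsion-free. As in the single-geodesic case, $\kappa$ is represented by the piecewise geodesic line $k \subset \mathbb{H}^2$ made of arcs of $\alpha^{-1}l$, $m$, $l$, $\alpha m$ and their $\kappa$-translates, but now the arcs have alternating lengths $l(\gamma), l(\delta), l(\gamma), l(\delta)$ over one fundamental period. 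The bisectors at the corners of $k$ are $\kappa$-orbit translates of $\lambda$ and $\lambda'$. By the same symmetry argument used in the crossing-bisector lemma, if any two consecutive corner bisectors met, the $\kappa$-invariance of the picture would force all the corner bisectors in one period to meet at a common point; this point would be fixed by $\kappa$, forcing $\kappa$ elliptic, a contradiction. Thus the consecutive bisector translates are pairwise disjoint, their projections onto $m$ can be separated by perpendiculars to $m$, and the sum of two adjacent bisector projections is bounded by the total $m$-translation accumulated over one fundamental period of $k$, which equals $l(\gamma) + l(\delta)$.

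Combining the two steps gives $P < P_\lambda + P_{\lambda'} \le l(\gamma) + l(\delta)$, proving the strict inequality. The main obstacle is Step 2: in the single-geodesic version one had $\alpha = \beta$ up to conjugacy and both translation lengths equal to $l(\gamma)$, so the ``period'' contributed $2l(\gamma)$; one must carefully verify that when $l(\gamma)$ and $l(\delta)$ are unrelated the correct sum separating consecutive bisector projections is exactly $l(\gamma) + l(\delta)$, rather than some other combination. The disjoint case is simpler and follows by applying Lemma \ref{disjointgeodesicsbisector} in parallel to each of $\gamma$ and $\delta$, the bound being summed to $l(\gamma) + l(\delta)$ (and in fact often sharper).
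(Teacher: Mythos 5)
Your Step 1 (the projection of $l$ onto $m$ is shorter than that of the bisector, by the angle computation) is fine, but Step 2 -- the heart of your crossing case -- has a genuine gap. The disjointness of consecutive corner bisectors of $k$ in the single-geodesic bisector lemma rests on a symmetry that disappears when $\gamma \neq \delta$: there, reflection in the bisector $\lambda$ swaps $l$ and $m$ and conjugates $\alpha$ to $\beta ^{\pm 1}$ (both being translations of the same length $l(\gamma )$ along the two lines), and this is exactly what forces a putative intersection point of two consecutive corner bisectors, which lies on the fixed line $\lambda$, to be a common point of all of them. When the translation lengths of $\alpha$ and $\beta$ differ, this reflection carries the corner bisector at $\alpha p$ to a line centered at distance $l(\gamma )$ from $p$ along $m$, not to the actual corner bisector at $\beta p$, which sits at distance $l(\delta )$; so ``the same symmetry argument'' does not apply and the pairwise disjointness is unproved. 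Your accounting is also off: the line $m$ carries exactly one arc of $k$ per period, of length $l(\delta )$, so even granting disjointness, separating the two corner bisectors centered on $m$ gives $P_{\lambda }+P_{\lambda ^{\prime }}\leq 2l(\delta )$ (and $\leq 2l(\gamma )$ projecting onto $l$); the quantity ``$l(\gamma )+l(\delta )$ of $m$-translation per period'' does not correspond to any translation in the picture. (The bound $2\min (l(\gamma ),l(\delta ))$ would suffice, but you have not established it.) You flagged this obstacle yourself; it is real, and the paper sidesteps it by a different device: cut and paste $\gamma$ and $\delta$ at the image of $p$ to form a single closed curve of length $l(\gamma )+l(\delta )$ whose geodesic representative $\sigma$ is strictly shorter, produce two \emph{disjoint} lines $o$ and $o^{\prime }$ above $\sigma$ through the midpoints of the segments of the two piecewise-geodesic lifts through $p$, and apply Lemma \ref{disjointgeodesicsbisector} to them to bound the projections of their bisector, hence of $l$ and $m$, by $l(\sigma )<l(\gamma )+l(\delta )$.

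The disjoint case is also not ``a parallel argument built on Lemma \ref{disjointgeodesicsbisector}.'' That lemma essentially requires $l$ and $m$ to lie above the \emph{same} closed geodesic (its proof uses that $\alpha ^{-1}\circ r^{\prime }\circ r$ is a translation of length $l(\gamma )$ along $m$), so it cannot be applied ``in parallel to each of $\gamma$ and $\delta$'' and summed. The paper's argument for disjoint $l$ and $m$ introduces $\alpha m$, a second line above $\delta$: when $m$ and $\alpha m$ cross, it applies Theorem \ref{orthogonalprojectionforsingleclosedgeodesic} to that pair together with a quadrilateral angle estimate, splitting half the projection of $m$ onto $l$ into a piece of length $\frac{1}{2}l(\gamma )$ and a piece of length less than $\frac{1}{2}l(\delta )$. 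So the disjoint case genuinely requires the single-geodesic theorem for crossing lines; it is not a soft corollary of the bisector lemma.
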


Interestingly, the case of two intersecting lines above distinct geodesics
reduces to the case of two disjoint lines above a single geodesic, while the
case of two disjoint lines above distinct geodesics reduces to the case of
two intersecting lines above a single geodesic. First we consider the
situation where $l$ and $m$ are disjoint.

\begin{figure}[ptb]
\centering 
\includegraphics[
height=1.5in]{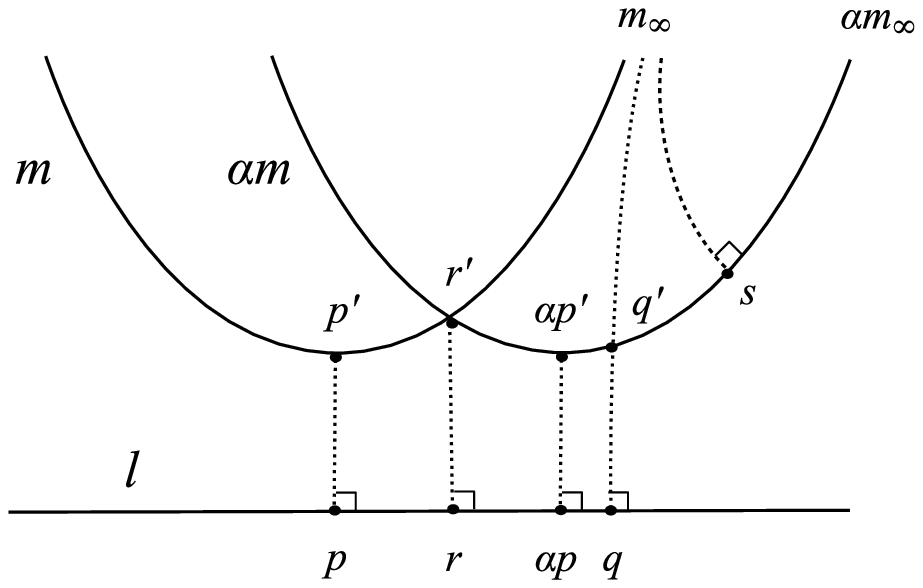}
\caption{{\protect\small Orthogonal projection of geodesics above different
curves}}
\label{orthogonal_projection_for_two_disjoint_geodesics}
\end{figure}

\begin{lemma}
Let $\gamma $ and $\delta $ be closed geodesics on an orientable hyperbolic
surface $M$, and let $l$ and $m$ be disjoint geodesics in $\mathbb{H}^{2}$
above $\gamma $ and $\delta $ respectively. Then the orthogonal projection
of $l$ onto $m$ has length strictly less than $l(\gamma )+l(\delta )$.
\end{lemma}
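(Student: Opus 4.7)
The plan follows the hint in the paragraph preceding this lemma: the case of two disjoint lines above distinct closed geodesics is to be reduced to an application of Theorem~\ref{orthogonalprojectionforsingleclosedgeodesic} to two distinct geodesic lines above a single closed geodesic.

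If $\gamma=\delta$ the conclusion is immediate from Theorem~\ref{orthogonalprojectionforsingleclosedgeodesic}, which already yields the stronger bound $l(\gamma)<l(\gamma)+l(\delta)$. So I assume $\gamma\neq\delta$. Let $\alpha,\beta\in\pi_1(M)$ be generators of the stabilizers of $l$ and $m$ respectively. Since $\gamma\neq\delta$, the element $\beta$ does not preserve $l$, so $\beta l$ is a line distinct from $l$; as the axis of the conjugate $\beta\alpha\beta^{-1}$, it still lies above $\gamma$. Applying Theorem~\ref{orthogonalprojectionforsingleclosedgeodesic} to the pair $l$ and $\beta l$, I obtain that the orthogonal projection of $l$ onto $\beta l$ has length strictly less than $l(\gamma)$.

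Let $I\subset m$ denote the orthogonal projection of $l$ onto $m$ and let $P$ be its length (the quantity to be bounded). Since $\beta$ is an isometry preserving $m$ and translating it by $l(\delta)$, the orthogonal projection of $\beta l$ onto $m$ is $\beta I$, that is, the image of $I$ under the translation of $m$ by $l(\delta)$. If $P\leq l(\delta)$ the bound is immediate, so I assume $P>l(\delta)$; then $I\cap\beta I$ is a non-degenerate subinterval of $m$ of length $P-l(\delta)$, and at each of its points $y$ the perpendicular to $m$ through $y$ meets both $l$ and $\beta l$, producing a pencil of common transversals to $l$ and $\beta l$ whose feet on $m$ sweep an interval of length $P-l(\delta)$.

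The final step is to convert this overlap data, together with the bound on the projection of $l$ onto $\beta l$ and with the non-asymptoticity of $l$ and $m$ guaranteed by Remark~\ref{nocommonendpoint}, into the desired strict inequality $P<l(\gamma)+l(\delta)$, by a perpendicular-separation argument in the spirit of Lemmas~\ref{disjointgeodesicsbisector} and \ref{disjointoppositelyorientedprojection}. The hard part will be precisely this conversion, because the naive estimate $P\leq l(\delta)+(\text{length of the projection of }l\text{ onto }\beta l)$ can fail when $l$ and $\beta l$ meet at a nearly right angle (the projection onto $\beta l$ being then very small while $P-l(\delta)$ need not be). In such borderline configurations the discreteness of $\pi_1(M)$ has to be used more substantially, for instance by considering several translates $\beta^{k}l$ above $\gamma$ simultaneously or by combining the bisector geometry of the pair $(\alpha,\beta)$ with Theorem~\ref{orthogonalprojectionforsingleclosedgeodesic}.
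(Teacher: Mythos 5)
Your reduction is set up in the same spirit as the paper's (translate one line by the stabilizer of the other and invoke Theorem \ref{orthogonalprojectionforsingleclosedgeodesic}), but the proof is not complete: the entire content of the lemma lies in the ``final step'' that you leave open, and you yourself observe that the obvious way to close it --- bounding $P-l(\delta)$ by the length of the projection of $l$ onto $\beta l$ --- fails when $l$ and $\beta l$ meet at a nearly right angle. Knowing that the perpendiculars to $m$ at the points of $I\cap\beta I$ meet both $l$ and $\beta l$ does not by itself control the length of $I\cap\beta I$ in terms of $l(\gamma)$, and none of the suggested remedies (more translates $\beta^k l$, bisector geometry of the pair) is developed. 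So there is a genuine gap, and it is exactly at the step you flag.

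The paper closes this gap as follows (with the roles of the two curves interchanged: it bounds the projection of $m$ onto $l$ and considers $m$ and $\alpha m$, where $\alpha$ generates the stabilizer of $l$). First, if $m$ and $\alpha m$ are \emph{disjoint}, the bisector argument of Lemma \ref{disjointgeodesicsbisector} already gives the stronger bound $l(\gamma)$, so one may assume $m$ and $\alpha m$ cross at a point $r'$. The projection is then measured in halves from the foot $p$ of the common perpendicular of $l$ and $m$: if $q$ is the foot on $l$ of the perpendicular from the endpoint $m_\infty$ of $m$, then $l(p,q)$ is half the projection, and with $r$ the midpoint of $[p,\alpha p]$ one has $l(p,r)=\tfrac12 l(\gamma)$. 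The decisive estimate is $l(r,q)<l(r',s)$, where $s$ is the foot on $\alpha m$ of the perpendicular from $m_\infty$; this follows from an angle argument in the Lambert quadrilateral $q'q\,\alpha p\,\alpha p'$ (three right angles force the fourth to be acute, which orders the feet of the perpendiculars correctly). Since $[r',s]$ is half the orthogonal projection of $m$ onto $\alpha m$ --- two \emph{crossing} lines above the single geodesic $\delta$ --- Theorem \ref{orthogonalprojectionforsingleclosedgeodesic} gives $l(r',s)<\tfrac12 l(\delta)$, and adding the two halves yields $P<l(\gamma)+l(\delta)$. It is this dichotomy (disjoint translates handled by the bisector lemma, crossing translates handled by a half-projection comparison inside a Lambert quadrilateral) that your proposal is missing.
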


\begin{proof}
Without loss of generality, we can assume that $l(\gamma )\leq l(\delta )$.
As usual we let $\alpha $ denote the element of $\pi _{1}(F)$ represented by 
$\gamma $, so that $\alpha $ acts on $\mathbb{H}^{2}$ with $l$ as its axis.
If $m$ and $\alpha m$ are disjoint, the proof of Lemma \ref%
{disjointgeodesicsbisector} shows that the orthogonal projection of $m$ onto 
$l$ has length strictly less than $l(\gamma )$, thus proving the lemma in
this case. Now suppose that $m$ and $\alpha m$ cross at a point $r^{\prime }$%
. Let $p$ and $p^{\prime }$ be the closest points of $l$ and $m$, so the arc 
$[p,p^{\prime }]$ is perpendicular to $l$ and to $m$. Let $m_{\infty }$ be
the point at infinity of $m$ such that $r^{\prime }$ lies in the ray $%
(p^{\prime },m_{\infty })$, let $q$ be the foot of the perpendicular to $l$
from $m_{\infty }$, let $q^{\prime }$ be the intersection of $qm_{\infty }$
with $\alpha m$, and let $s$ be the foot of the perpendicular to $\alpha m$
from $m_{\infty }$. See Figure \ref%
{orthogonal_projection_for_two_disjoint_geodesics}. To prove the lemma we
need to show that the arc $[p,q]$, which is half of the orthogonal
projection of $m$ to $l$, is shorter than half of $l(\gamma )+l(\delta )$.

If $q$ lies between $p$ and $\alpha p$ then $l(p,q)<l(p,\alpha p)=l(\gamma
)\leq \frac{1}{2}(l(\gamma )+l(\delta ))$, as required. Next suppose that $q$
lies beyond $\alpha p$, as in Figure \ref%
{orthogonal_projection_for_two_disjoint_geodesics}. Then the angle $%
m_{\infty }q^{\prime }\alpha m_{\infty }$ must be acute because it is equal
to an interior angle of the geodesic quadrilateral $q^{\prime }q\alpha
p\alpha p^{\prime }$ whose other interior angles are right angles. It
follows that $s$ must lie on the ray $[q^{\prime },\alpha m_{\infty })$, as
shown in Figure \ref{orthogonal_projection_for_two_disjoint_geodesics}.
Therefore $l(r,q)<l(r^{\prime },q^{\prime })<l(r^{\prime },s)$. As $%
(r^{\prime },s)$ is half of the orthogonal projection of $m$ to $\alpha m$,
Theorem \ref{orthogonalprojectionforsingleclosedgeodesic} tells us that $%
l(r^{\prime },s)<\frac{1}{2}l(\delta )$. Thus $l(r,q)<\frac{1}{2}l(\delta )$%
. As $l(p,r)=\frac{1}{2}l(p,\alpha p)=\frac{1}{2}l(\gamma )$, it follows
that $l(p,q)=l(p,r)+l(r,q)<\frac{1}{2}l(\gamma )+\frac{1}{2}l(\delta )$, as
required.

If $q=\alpha p$ then $q^{\prime }=\alpha p^{\prime }=s$, so $%
l(p,q)=l(p,r)+l(r,\alpha p)<l(p,r)+l(r^{\prime },\alpha p^{\prime
})=l(p,r)+l(r^{\prime },s)$. As in the preceding case, $l(r^{\prime },s)<%
\frac{1}{2}l(\delta )$, by Theorem \ref%
{orthogonalprojectionforsingleclosedgeodesic}, and $l(p,r)=\frac{1}{2}%
l(\gamma )$. Again it follows that $l(p,q)<\frac{1}{2}l(\gamma )+\frac{1}{2}%
l(\delta )$, as required.
\end{proof}

We now consider the case when $l$ and $m$ cross. In this case we get a
better bound for the projection length, which depends only on Lemma \ref%
{disjointgeodesicsbisector}.

\begin{lemma}
\label{orthogonalprojectionfortwoclosedcrossinggeodesics}Let $\gamma $ and $%
\delta $ be closed geodesics on an orientable hyperbolic surface $M$, and
let $l$ and $m$ be distinct crossing geodesics in $\mathbb{H}^{2}$ above $%
\gamma $ and $\delta $ respectively. Then the orthogonal projection of a
bisector of $l$ and $m$ onto $m$ has length strictly less than $l(\gamma
)+l(\delta )$.
\end{lemma}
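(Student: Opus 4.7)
The plan is to mimic the proof of the preceding bisector lemma (for two crossing geodesics above a single closed geodesic). Let $p = l \cap m$, let $\lambda$ and $\lambda'$ be the two bisectors of $l$ and $m$ at $p$, and let $\alpha, \beta \in \pi_1(M)$ generate the stabilizers of $l$ and $m$ respectively, so that $\alpha$ translates $l$ by $l(\gamma)$ and $\beta$ translates $m$ by $l(\delta)$. Since $l \neq m$, the elements $\alpha$ and $\beta$ do not share an axis and so cannot commute; hence the commutator $\kappa = \alpha\beta\alpha^{-1}\beta^{-1}$ is a nontrivial, and therefore hyperbolic or parabolic, element of $\pi_1(M)$.

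I would then construct the piecewise geodesic line $k$ in $\mathbb{H}^2$ that projects to the loop $\gamma\,\delta\,\gamma^{-1}\,\delta^{-1}$ on $M$: starting at $p$, $k$ runs along $l$ to $\alpha p$, along $\alpha m$ to $\alpha\beta p$, along $\alpha\beta l$ to $\alpha\beta\alpha^{-1}p$, along $\alpha\beta\alpha^{-1}m$ to $\kappa p$, and then continues periodically under $\kappa$. The angle bisectors at the successive corners are $\lambda, \alpha\lambda', \alpha\beta\lambda, \alpha\beta\alpha^{-1}\lambda'$ and their $\kappa$-translates, with $\lambda$ or $\lambda'$ chosen at each corner according to which of the supplementary angles is bisected.

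The crucial step is to show that $\lambda$ and $\alpha\lambda'$ are disjoint. As in the preceding lemma, if they met at a point $x$, then the reflection-across-$k$ argument would force every corner bisector of $k$ to pass through $x$, so $\kappa$ would fix $x$, contradicting the fact that $\kappa$ has no fixed points in $\mathbb{H}^2$. Iterating the same argument at the next corner shows that $\lambda, \alpha\lambda', \alpha\beta\lambda$ are pairwise disjoint, and since their endpoints at infinity consequently do not interleave on $S^1_\infty$, their orthogonal projections onto $m$ are three disjoint subintervals of $m$ arranged in this cyclic order.

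The span along $m$ from the left end of the projection of $\lambda$ to the right end of the projection of $\alpha\beta\lambda$ is bounded by the distance along $m$ from $p$ to the foot of the perpendicular from $\alpha\beta p$; this in turn is bounded by $d(p, \alpha\beta p)$, which by the triangle inequality applied to the broken path $p \to \alpha p \to \alpha\beta p$ (with $d(p,\alpha p) = l(\gamma)$ along $l$ and $d(\alpha p, \alpha\beta p) = l(\delta)$ along $\alpha m$) is strictly less than $l(\gamma) + l(\delta)$, strictness holding because the broken path turns at $\alpha p$. In particular the middle projection, and a fortiori the projection of $\lambda$ onto $m$, is strictly less than $l(\gamma) + l(\delta)$; Remark \ref{nocommonendpoint} handles the corner case of coincidences at infinity. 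The main obstacle is the disjointness step, since in the single-geodesic case the symmetry-of-configuration argument used that $\alpha$ and $\beta$ were conjugate in $\pi_1(M)$, which fails here; a more hands-on analysis of $r_{\alpha\lambda'} r_\lambda$ would be needed, showing that if this orientation-preserving isometry were elliptic then some element built from $\alpha^{\pm 1}, \beta^{\pm 1}$ would inherit a fixed point in $\mathbb{H}^2$, contradicting torsion-freeness of $\pi_1(M)$.
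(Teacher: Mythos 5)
There is a genuine gap, and it is exactly the step you flag at the end: the disjointness of the consecutive corner bisectors of the commutator path $k$ is never established, and the two routes you gesture at both fail. In the single-geodesic lemma the paper gets disjointness from a real symmetry of $k$: reflection in $\lambda$ carries the corner at $\alpha p$ to the corner at $\beta p$ because $d(p,\alpha p)=d(p,\beta p)$, which holds precisely because $\alpha$ and $\beta$ are conjugate and hence have equal translation lengths. When $l(\gamma)\neq l(\delta)$ no isometry of $\mathbb{H}^{2}$ permutes the corners of $k$ in this way, so an intersection point of $\lambda$ and $\alpha\lambda'$ is not forced onto the other bisectors and there is no point that $\kappa$ must fix. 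Your proposed repair via $r_{\alpha\lambda'}r_{\lambda}$ also does not go through: this composition carries $m$ to $\alpha m$ but moves $p$ a distance $l(\gamma)$ along $\alpha m$, so $\alpha^{-1}r_{\alpha\lambda'}r_{\lambda}$ preserves $m$ and translates it by $l(\gamma)$ rather than by a multiple of the primitive translation length along $m$; unlike the situation in Lemma \ref{disjointgeodesicsbisector}, it is therefore not an element of $\pi_{1}(M)$, and no mechanism is offered for converting its possible ellipticity into a fixed point of a group element. A second (more repairable) defect is the final estimate: the projection of $\lambda$ onto $m$ extends to both sides of $p$, so it is not contained in the interval from $p$ to the foot of the perpendicular from $\alpha\beta p$ as you assert; one would instead have to trap it between the projections of the two adjacent bisectors $\beta\lambda'$ and $\alpha\lambda'$, anchored at $\beta p$ and at the foot of the perpendicular from $\alpha p$, which lie at distances $l(\delta)$ and at most $l(\gamma)$ from $p$ on opposite sides.

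The paper sidesteps all of this with a different construction. It performs a cut-and-paste on $\gamma$ and $\delta$ at the image of $p$, producing a closed curve of length $l(\gamma)+l(\delta)$ whose geodesic representative $\sigma$ satisfies $l(\sigma)<l(\gamma)+l(\delta)$; the two lifts $o$ and $o'$ of $\sigma$ coming from the two smoothings at $p$ are disjoint, are interchanged by the rotation through $\pi$ about $p$, and so have a bisector $b$ through $p$. Lemma \ref{disjointgeodesicsbisector} bounds the projection of $b$ onto $o$ by $l(\sigma)$, and a comparison of angles at $p$ transfers this bound to one of the two bisectors of $l$ and $m$. If you wish to keep the commutator approach, you must first prove the disjointness of the corner bisectors by some independent argument; as written, the proof is incomplete at its central step.
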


\begin{proof}
As $l$ and $m$ cross, so do $\gamma $ and $\delta$. The idea of our proof is
to perform cut and paste on $\gamma $ and $\delta $ so as to obtain a new
curve of length $l (\gamma ) +l (\delta )$. The shortest closed geodesic $%
\sigma $ in the homotopy class of this curve must have $l (\sigma ) <l
(\gamma ) +l (\delta )$.

\begin{figure}[ptb]
\centering 
\includegraphics[
height=1.4in]{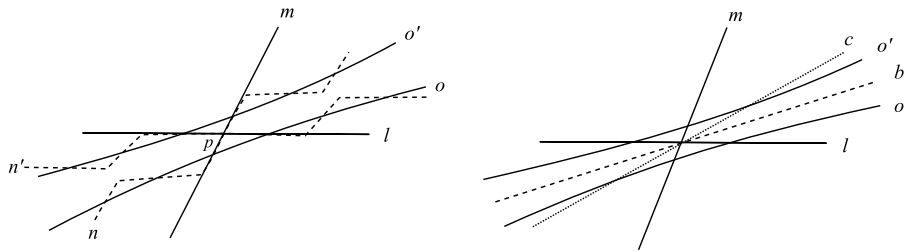}
\caption{The geodesics $o$ and $o^{\prime }$ arising from piecewise
geodesics $n$ and $n^{\prime }$.}
\label{orthogonal_projection_for_two_crossing_geodesics}
\end{figure}

Consider the crossing geodesics $l$ and $m$ in $\mathbb{H}^{2}$, and let $p$
denote the intersection point $l\cap m$. A cut and paste at $p$ determines a
cut and paste operation on $\gamma $ and $\delta $ at a single point, which
must yield a single piecewise geodesic closed curve $\eta $ with two corners
at the cut and paste point. Thus there are two piecewise geodesic paths $n$
and $n^{\prime }$ in $\mathbb{H}^{2}$ above $\eta $, which pass through $p$.
See Figure \ref{orthogonal_projection_for_two_crossing_geodesics}. Each
proceeds along $l$ from $p$ for a distance equal to $l(\gamma )$, then turns
a corner onto a translate of $m$, and proceeds a distance $l(\delta )$, etc.
We let $\sigma $ denote the closed geodesic in the homotopy class of $\eta $%
. Corresponding to $n$, we can construct a geodesic $o$ in $\mathbb{H}^{2}$
above $\sigma $ by simply joining the midpoints of the geodesic segments of $%
n$ by geodesic segments. And similarly we can construct a geodesic $%
o^{\prime }$ in $\mathbb{H}^{2}$ above $\sigma $ by simply joining the
midpoints of the geodesic segments of $n^{\prime }$. The reason why $o$ and $%
o^{\prime }$ are geodesic rather than just piecewise geodesic, is that $n$,
and hence $o$, is invariant under rotation through $\pi $ about each of the
points where $n$ meets $o$, and a similar statement holds for $n^{\prime }$
and $o^{\prime }$. As $n$ and $o$ have the same stabilizer, and $n^{\prime }$
and $o^{\prime }$ similarly, it follows that $o$ and $o^{\prime }$ must lie
above the closed geodesic $\sigma $. As $o^{\prime }$ is a translate of $o$
by the action of the stabilizer of $l$, it follows that $o$ and $o^{\prime }$
are disjoint. As a rotation through $\pi $ about $p$ sends $o$ to $o^{\prime
}$, their bisector $b$ goes through the point $p$.

Now Lemma \ref{disjointgeodesicsbisector} says that the orthogonal
projections of $b$ to $o$ and $o^{\prime }$ are not larger that $l(\sigma )$%
. These have the same length as the projections of $o $ and $o^{\prime }$ to 
$b$, which are the same interval of $b$. This interval contains the feet of
the perpendiculars to $b$ from the endpoints of $l$, so the projection of $l$
to $b$ is shorter than the projections of $o $ and $o^{\prime }$ to $b$. For
the same reason, the projection of $m$ to $b$ is shorter than the
projections of $o$ and $o^{\prime }$ to $b$. Thus the orthogonal projections
of $b$ to $l$ and $m$ are shorter than $l(\sigma )$.

Let $c$ denote the bisector of $l$ and $m$. Then one of $l$ and $m$ forms a
larger angle with $c$ than with $b$, so that the orthogonal projection of $c$
to one of $l$ and $m$ is shorter than the orthogonal projection of $b$.
Hence the orthogonal projections of $c$ to $l$ and $m$, which are equal,
must be shorter than $l(\sigma )$, which is smaller than $l(\gamma
)+l(\delta )$, as required.
\end{proof}

Lemma \ref{orthogonalprojectionfortwoclosedcrossinggeodesics} gives a lower
bound for the intersection angles of two hyperbolic geodesics.

\begin{corollary}
\label{angleoftwodistinctgeodesics} The intersection angles of two closed
geodesics $\gamma$ and $\delta$ in an orientable hyperbolic surface are
larger than $2 \Pi(\frac{l(\gamma)+l(\delta)}{2})$.
\end{corollary}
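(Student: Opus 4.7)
The plan is to apply Lemma \ref{orthogonalprojectionfortwoclosedcrossinggeodesics} to the angle bisector of two lifted geodesics, and then convert the resulting length bound into an angle bound via the angle of parallelism.

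I would first lift an intersection of $\gamma$ and $\delta$ at angle $\phi$ to a crossing of distinct geodesics $l$ above $\gamma$ and $m$ above $\delta$ at a point $\widetilde p\in\mathbb{H}^2$, meeting at the same angle $\phi$. I would then take the geodesic $c$ through $\widetilde p$ that bisects this angle, so that $c$ meets each of $l$ and $m$ at angle $\phi/2$. By Lemma \ref{orthogonalprojectionfortwoclosedcrossinggeodesics}, the orthogonal projection of $c$ onto $m$ has length strictly less than $l(\gamma)+l(\delta)$; call this length $D$.

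Next I would express $D$ in terms of $\phi$ using the definition of $\Pi$. The rotation of $\mathbb{H}^2$ through angle $\pi$ about $\widetilde p$ is an isometry that preserves both $c$ and $m$ (reversing each), and it therefore interchanges the two ideal endpoints $c_\pm$ of $c$ as well as the feet $q_\pm$ of the perpendiculars to $m$ from $c_\pm$. Hence $q_+$ and $q_-$ lie on $m$ on opposite sides of $\widetilde p$ at a common distance $d$, giving $D=2d$. The ideal right triangle with vertices $\widetilde p$, $q_+$, $c_+$ has its right angle at $q_+$ and angle $\phi/2$ at $\widetilde p$, so by the definition of the angle of parallelism $d=\Pi^{-1}(\phi/2)$, and thus $D=2\Pi^{-1}(\phi/2)$.

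Combining these two steps gives $2\Pi^{-1}(\phi/2)<l(\gamma)+l(\delta)$, i.e.\ $\Pi^{-1}(\phi/2)<(l(\gamma)+l(\delta))/2$. Since $\Pi$ is strictly decreasing, applying $\Pi$ to both sides reverses the inequality and yields $\phi/2>\Pi((l(\gamma)+l(\delta))/2)$, hence $\phi>2\Pi((l(\gamma)+l(\delta))/2)$. The crucial point in the plan is the use of the bisector rather than of $l$ and $m$ themselves: applying Theorem \ref{orthogonalprojectionfortwoclosedgeodesics} to $l$ and $m$ directly would only produce $\phi>\Pi((l(\gamma)+l(\delta))/2)$, which is strictly weaker. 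There is no serious obstacle beyond the bookkeeping with the angle of parallelism; everything else reduces to the previous lemma and a standard right-triangle computation.
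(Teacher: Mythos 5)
Your argument is correct and is exactly the deduction the paper intends: the paper states Corollary \ref{angleoftwodistinctgeodesics} as an immediate consequence of Lemma \ref{orthogonalprojectionfortwoclosedcrossinggeodesics} without writing out the details, and your computation of the bisector's projection length as $2\Pi^{-1}(\phi/2)$ via the angle of parallelism is the standard step being left implicit. Your closing remark correctly identifies why the bisector (rather than $l$ and $m$ directly) is needed to get the factor of $2$.
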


Theorem \ref{orthogonalprojectionfortwoclosedgeodesics} also shows that the
sides of the $n$--gons formed by the lines above a family of geodesics $%
\gamma _{1},\gamma _{2},\ldots ,\gamma _{k}$ are shorter than $2(n-2)\max
\{l(\gamma _{i})\}$.

\ 

The bound for the lengths of the orthogonal projections given in Theorem \ref%
{orthogonalprojectionforsingleclosedgeodesic} is optimal, in the following
sense:

\begin{claim}
For each hyperbolic surface $M$ with $\chi (M)<0$, there is a sequence of
closed geodesics $\gamma _{n}$ and lines $l_{n}$, $m_{n}$ in $\widetilde{F}$
above $\gamma _{n}$ so that the length of the orthogonal projection of $%
m_{n} $ to $l_{n}$, divided by the length of $\gamma _{n}$, converges to $1$.
\end{claim}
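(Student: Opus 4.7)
The plan is to exhibit, for any hyperbolic surface $M$ with $\chi(M)<0$, an explicit sequence of closed geodesics whose lifts in $\mathbb{H}^2$ achieve projection length arbitrarily close to the full length of the geodesic. Since $\chi(M)<0$, the group $\pi_1(M)$ contains two non-commuting hyperbolic isometries. Fix any closed geodesic $\alpha$ on $M$ with axis $L\subset \mathbb{H}^2$, and let $A \in \pi_1(M)$ be the generator of its stabilizer (so $A$ is hyperbolic with axis $L$ and translation length $l(\alpha)$). Choose any $B\in\pi_1(M)$ that does not commute with $A$, and define $\gamma_n$ to be the closed geodesic on $M$ freely homotopic to $A^n B$. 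Let $l_n\subset\mathbb{H}^2$ be the axis of $A^n B$, and set $m_n=A\cdot l_n$, which is the axis of $A^{n+1}BA^{-1}$. Since $B$ does not commute with $A$, the axes $l_n$ and $m_n$ are distinct, and both project to $\gamma_n$, so they satisfy the hypotheses of Theorem~\ref{orthogonalprojectionforsingleclosedgeodesic}.

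Next I would establish the standard asymptotic behavior of the fixed points and translation length of $A^n B$. Writing $A^\pm$ for the fixed points of $A$ on $\partial\mathbb{H}^2$, the map $A^n$ contracts every compact subset of $\partial\mathbb{H}^2\smallsetminus\{A^-\}$ toward $A^+$ with rate $e^{-n\,l(\alpha)}$. Hence the attracting fixed point $p_n^+$ of $A^n B$ satisfies $d(p_n^+,A^+)=O(e^{-n\,l(\alpha)})$, while its repelling fixed point converges to $B^{-1}A^-$. From this one deduces $l(\gamma_n)=n\,l(\alpha)+O(1)$, and more importantly that the axis $l_n$ stays within an exponentially small distance of $L$ along a sub-ray ending near the attracting side, of length approximately $n\,l(\alpha)$.

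Since $A$ preserves $L$ and translates it by $l(\alpha)$, the axis $m_n=A\cdot l_n$ tracks $L$ along a sub-ray shifted by $l(\alpha)$ relative to that of $l_n$. The two sub-rays overlap along a segment $I\subset L$ of length at least $(n-1)\,l(\alpha)$, and on $I$ both $l_n$ and $m_n$ are exponentially close to $L$, hence to each other. Picking points $p_n,q_n$ on $l_n$ that project orthogonally to the two ends of $I$, their orthogonal projections $p_n',q_n'$ onto $m_n$ lie within a distance $\varepsilon_n=O(e^{-cn})$ of $p_n,q_n$ respectively. By the triangle inequality, the orthogonal projection of $l_n$ onto $m_n$ has length at least $d(p_n',q_n')\ge (n-1)\,l(\alpha)-2\varepsilon_n$, and dividing by $l(\gamma_n)=n\,l(\alpha)+O(1)$ gives a ratio tending to $1$ as $n\to\infty$.

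The main obstacle is the quantitative tracking estimate: verifying rigorously that the axis $l_n$ of $A^n B$ lies within exponentially small Hausdorff distance of $L$ along a segment of length approximately $n\,l(\alpha)$. This can be done by explicit computation in $PSL(2,\mathbb{R})$, diagonalizing $A$ so that the attracting and repelling behaviors become rescalings, and estimating the fixed points of the matrix $A^n B$ perturbatively in the small parameter $e^{-n\,l(\alpha)}$; alternatively one can argue directly in the upper half-plane using the contraction property of hyperbolic isometries on horoballs. Once this estimate is in place, the rest of the proof is immediate from hyperbolic triangle inequalities.
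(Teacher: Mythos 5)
Your argument is correct in outline but follows a genuinely different route from the paper. The paper first passes to a three-holed-sphere cover, takes $\gamma_n$ to be the geodesic of the explicit cyclic word $(xy)^n x$ in the generators dual to the two cutting geodesics, and uses the reflection symmetries of the resulting quadrilateral tiling to show directly that the arc along which $l_n$ and $m_n$ cross $2(n-1)$ common quadrilaterals is contained in the orthogonal projection; the length bounds (projection $>2(n-1)d$ versus $l(\gamma_n)<(2n+1)d$, where $d$ is the width of the quadrilateral) are then read off from the tiling. You instead work in $\pi_1(M)$ itself with the family $A^nB$ and its conjugate $A(A^nB)A^{-1}$, and derive the long overlap from the standard fact that the axis of $A^nB$ fellow-travels the axis $L$ of $A$ for length about $n\,l(\alpha)$, together with $l(\gamma_n)=n\,l(\alpha)+O(1)$ (which holds because $\operatorname{tr}(A^nB)\sim a\,e^{n\,l(\alpha)/2}$ with $a\neq 0$ forced by discreteness, since otherwise $A^nB$ would eventually be elliptic). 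Your approach is more flexible: it avoids the reduction to the three-holed sphere and produces examples directly in any $\pi_1(M)$, and the needed tracking estimate, which you rightly flag as the one step requiring care, is a standard perturbative computation of the fixed points of $A^nB$. One small imprecision: the distance from $l_n$ to $L$ is exponentially small only near the attracting end of the overlap segment; at the other end it is merely bounded by the (fixed, $n$-independent) distance from the limiting geodesic $[B^{-1}A^{-},A^{+}]$ to $L$, so your $\varepsilon_n=O(e^{-cn})$ should be replaced by an $O(1)$ error at that end --- but since all errors are $O(1)$ while the lengths grow linearly in $n$, the ratio still tends to $1$ and the conclusion stands. What the paper's construction buys is explicit uniform constants and, in addition, examples of both kinds (crossing and disjoint pairs $l_n$, $m_n$), whereas your construction does not control whether $l_n$ and $m_n$ cross.
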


\begin{figure}[ptb]
\centering
\includegraphics[height=0.8in]{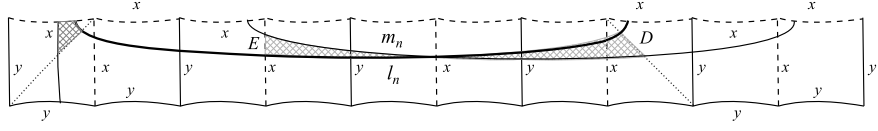}
\caption{Two lines with long projections}
\label{bounds_are_sharp}
\end{figure}

\begin{proof}
As each hyperbolic surface $M$ with $\chi (M)<0$ has a (usually infinite)
covering which is a hyperbolic sphere with three holes, it suffices to prove
the claim when $M$ is a hyperbolic sphere with three holes, each
corresponding to a cusp or a boundary curve of its convex core. We can cut $%
M $ along two infinite geodesics meeting the boundary curves of the convex
core orthogonally to get an ideal or ultra-ideal quadrilateral. In each
case, this quadrilateral has a reflectional symmetry that shows that the
diagonals intersect at right angles and the distances between opposite sides
of the quadrilateral are equal. Let $x$ and $y$ be generators of $\pi
_{1}(M) $ dual to the cutting geodesics. To get a sequence $\gamma _{n}$
where $l_{n} $ intersects $m_{n}$, let $\gamma _{n}$ be the geodesic
represented by the cyclic word $(xy)^{n}x$. Let $l_{n}$ and $m_{n}$ be two
pre-images of $\gamma _{i}$ in the universal cover $\widetilde{M}$ of $M$ as
in Figure \ref{bounds_are_sharp}. Since the infinite words corresponding to $%
l_{n}$ and $m_{n}$ overlap in a word $(xy)^{n-1}x$, then $l_{n}$ and $m_{n}$
cross together $2(n-1)$ quadrilaterals.

We claim that the arc $a_{n}$ where $l_{n}$ crosses these quadrilaterals is
contained in the projection of $m_{n}$ to $l_{n}$. To see this, consider the
shaded quadrangle in Figure \ref{bounds_are_sharp}, formed by $l_{n}$, $%
m_{n} $, the first cutting geodesic $E$ that they cross together and the
last diagonal $D$ that they cross together. Recall that $M$ is the double of
an ideal or ultra ideal triangle, and so it admits a reflection symmetry
which interchanges these triangles. It follows that in the universal cover $%
\widetilde{M}$ of $M$, reflection in any quadrilateral edge or diagonal is a
symmetry of the tiling of $\widetilde{M}$ by quadrilaterals. In particular,
a reflection in $D$ preserves the tiling and preserves, but inverts, the
geodesic $l$, because it inverts the infinite word which is the unwrapping
of $(xy)^{n}x$. It follows that the angle at the point where $l_{n}$ crosses 
$D$ is $\pi /2$. Also, the reflection of $l_{n}$ in $E$ is a geodesic that
crosses two of the $2(n-1)$ quadrilaterals crossed by $l_{n}$ and $m_{n}$
and "turns north" at the third quadrilateral. So the reflection of $l_{n}$
lies "north" of $l_{n}$ in these quadrilaterals, and therefore the shaded
angle at the intersection of $l_{n}$ and $E$ is greater than its
supplementary angle, and so it is greater than $\pi /2$.

Now, if $d$ is the distance between opposite sides of the quadrilaterals
then $\gamma _{n}$ is shorter than $(2n+1)d$, as it is homotopic to a
polygonal curve that runs through the middle arcs of the quadrilaterals,
made of $2n$ arcs of length $d$ and $2 $ subarcs of length $d/2$. On the
other hand, the orthogonal projection of $m_{n}$ to $l_{n}$ is longer than
the arc $a_{n}$, which has length at least $2(n-1)d$. So the ratio between
the length of the projection and the length of the geodesic is larger than $%
\frac{2n-2}{2n+1}$, so it converges to $1$ as $n$ goes to infinity.

To get a sequence $\gamma _{n}$ where $l_{n}$ is disjoint from $m_{n}$, let $%
\gamma _{n}$ be the geodesic represented by the word $x(xy)^{n}y$, and let $%
l_{n}$ and $m_{n}$ be two pre-images of $\gamma _{i}$ in $\widetilde{F}$ so
that the infinite words corresponding to $l_{n}$ and $m_{n}$ overlap in the
word $(xy)^{n-1}$. So $l_{n}$ and $m_{n}$ cross together $2(n-1)-1$
quadrilaterals. One can show as before that the arc where $l_{n}$ crosses
these quadrilaterals is contained in the projection of $m_{n}$ to $l_{n}$.
So $\gamma _{n}$ is shorter than $2(n+1)d$, while the orthogonal projection
of $m_{n}$ to $m_{n}$ is longer than $(2n-3)d$. Therefore the ratio between
the length of the projection and the length of $\gamma _{n}$ is larger than $%
\frac{2n-3}{2n+2}$, and so it converges to $1$ as $n$ goes to infinity.
\end{proof}

The upper bound for the lengths of the orthogonal projections of lines above
two different geodesics given in Theorem \ref%
{orthogonalprojectionfortwoclosedgeodesics} is also optimal. Consider the
sequence of geodesics $\gamma _{n}$ and $\delta _{n}$ represented by the
words $(xy)^{n}x$ and $(xy)^{n+2}x$ in a three-holed sphere. Take two
geodesic lines $l_{n}$ and $m_{n}$ above $\gamma _{n}$ and $\delta _{n}$
whose axes have a common subword $(xy)^{n}x(xy)^{n}$. Then the projection of 
$m_{n}$ to $l_{n}$ has length greater than $4nd$, while the lengths of $%
\gamma _{n}$ and $\delta _{n}$ are smaller than $(2n+1)d$ and $(2n+5)d$
respectively so the ratio between the length of the projection and the sum
of the lengths of $\gamma _{n}$ and $\delta _{n}$ is larger than $\frac{4n+1%
}{4n+6}$, and so it converges to $1$ as $n$ goes to infinity.

Although the bound for the projection length given in Theorem \ref%
{orthogonalprojectionforsingleclosedgeodesic} is optimal in relative terms,
it may not be so in absolute terms. The above examples suggest that the
projection length for two intersecting geodesics above a closed geodesic $%
\gamma $ might be shorter than $l(\gamma )-c$ for some constant $c>0$
independent of $\gamma $. Comparing the angles of parallelism of $\frac{l}{2}
$ and $\frac{l-c}{2}$, one can see that $\Pi (\frac{l-c}{2})/\Pi (\frac{l}{2}%
)=arctan(e^{-l/2+c/2}) / arctan(e^{-l/2})$ is an increasing function of $l$
which is greater than 1 for each $c>0$. For $c\sim 1.44$, this function is
already greater than 2 for $l=3.72488..$ (the length of the shortest
non-simple geodesic on any hyperbolic surface, or twice the width of a
regular ideal quadrilateral). So a gap of around $1.44$ between the
projection length and the geodesic length of $\gamma$ would imply that the
self-intersection angles of $\gamma $ are larger than $2\Pi (l(\gamma )/2)$,
or twice the bound given in Corollary \ref{bounds for angles}.

\ 

We conclude by discussing the relations between the bounds obtained from our
results and from Gilman's result in \cite{G}. She considers a pair of
isometries $A$ and $B$ of the hyperbolic plane which generate a purely
hyperbolic subgroup of $SL(2,\mathbb{R})$. Denote their translation lengths
by $|A|$ and $|B|$, and suppose that their axes meet at an angle $\phi $.
Gilman shows that \ $sin(\phi )sinh(|A|/2)sinh(|B|/2)>1$. This lower bound
on the angle $\phi $ is stronger than the bound we obtain from Corollary \ref%
{angleoftwodistinctgeodesics}. Now consider the case when $A$ and $B$ are
conjugate, so that $|A|=|B|$. Gilman's result gives the inequality

\begin{equation*}
sin(\phi )>\frac{4}{e^{|A|}-2+e^{-|A|}}.
\end{equation*}

\ On the other hand, in the case of a single closed geodesic, Theorem \ref%
{orthogonalprojectionforsingleclosedgeodesic} implies that $\phi $ is larger
than the parallelism angle of $\frac{|A|}{2}$. Hence

\begin{equation*}
sin(\phi )>sin(\Pi(\frac{|A|}{2}))=sech(\frac{|A|}{2})=\frac{2}{e^{\frac{|A|%
}{2}}+e^{-\frac{|A|}{2}}}
\end{equation*}

\ 

which is much stronger than Gilman's bound if $|A|$ is large. The two bounds
are approximately equal when $|A|=2.2$, but as $|A|$ increases the ratio of
the two bounds tends to infinity.

\end{document}